\definecolor{light-gray}{gray}{0.85}
\newcommand*{\Cdot}{\raisebox{-0.25ex}{\scalebox{1.25}{$\cdot$}}}
\theoremstyle{Theorem}
\newtheorem{lemma}{Lemma}[section]
\newtheorem{prop}{Proposition}[section]
\newtheorem{theorem}{Theorem}[section]
\newtheorem{cor}{Corollary}[section]
\DeclareMathOperator*{\argmax}{arg\,max}
\newtheorem{definition}{Definition}[section]
\declaretheorem[style=definition,qed=$\square$,numberwithin=section]{example}
\theoremstyle{definition}
\newtheorem*{remark}{Remark}
\definecolor{hall}{gray}{0.6}
\def\X{{\cal X}}
\def\Y{{\cal Y}}
\def\Z{{\cal Z}}
\def\B{{\cal B}}
\def\DEF{\stackrel{\mbox{\scriptsize{\normalfont{def}}}}{=}}
\newsavebox\myboxA
\newsavebox\myboxB
\newlength\mylenA
\def\namedlabel#1#2{\begingroup
   \def\@currentlabel{#2}%
   \label{#1}\endgroup
}
\begin{document}

\title{Existence of infinite Viterbi path for pairwise Markov models}
\date{}
\author{Jüri Lember}
\author{Joonas Sova}
\affil{\small University of Tartu, Liivi 2 50409, Tartu, Estonia.\\
Email: \textit{jyril@ut.ee}; \textit{joonas.sova@ut.ee}}
\maketitle

\begin{abstract} \noindent For hidden Markov models one of the most popular estimates of the hidden chain is the Viterbi path -- the path maximising the posterior probability. We consider a more general setting, called the pairwise Markov model, where the joint process consisting of finite-state hidden regime and observation process is assumed to be a Markov chain. We prove that under some conditions it is possible to extend the Viterbi path to infinity for almost every observation sequence which in turn enables to define an infinite Viterbi decoding of the observation process, called the Viterbi process. This is done by constructing a block of observations, called a barrier, which ensures that the Viterbi path goes trough a given state whenever this block occurs in the observation sequence.  \end{abstract}
\section{Introduction and preliminaries}
\subsection{Introduction}
We consider a Markov chain $Z =\{Z_k \}_{k\geq 1}$ with product
state space $\mathcal{X}\times \mathcal{Y}$, where $\mathcal{Y}$ is
a finite set (state space) and $\mathcal{X}$ is an arbitrary
separable metric space (observation space). Thus, the process $Z$
decomposes as $Z=(X,Y)$, where $X=\{X_k \}_{k\geq 1}$ and $Y=\{Y_k
\}_{k\geq 1}$ are random processes taking values in  $\mathcal{X}$
and $\mathcal{Y}$, respectively. The process $X$ is identified as an
observation process and the process $Y$, sometimes called the {\it
regime}, models  the observations-driving hidden state sequence.
Therefore our general model contains many well-known stochastic
models as a special case: hidden Markov models (HMM), Markov
switching models, hidden Markov models with dependent noise and many
more.  The {\it segmentation} or {\it path estimation} problem
consists of estimating the realization of $(Y_1,\ldots,Y_n)$ given a
realization $x_{1:n}$ of $(X_1,\ldots,X_n)$. A standard estimate is
any path $v_{1:n}\in \mathcal{Y}^n$ having maximum posterior
probability:
$$v_{1:n}=\argmax_{y_{1:n}}P(Y_{1:n}=y_{1:n}|X_{1:n}=x_{1:n}).$$
Any such  path is called {\it Viterbi path} and we are interested in
the behaviour of $v_{1:n}$ as $n$ grows. The study of asymptotics of
Viterbi path is complicated by the fact that adding one more
observation, $x_{n+1}$ can change the whole path, and so it is not
clear, whether there exists a limiting infinite Viterbi path. In
fact, as we show in Example \ref{noInf}, for some models the Viterbi
path keeps changing a.s. and so there is no infinite path. The goal
of the present paper is to establish the conditions that ensure the
existence of infinite Viterbi path, a.s. When this happens, one can
define infinite Viterbi decoding of $X$-process called {\it Viterbi
process}.
 In this paper, we construct the infinite Viterbi path using the
 barriers. A {\it barrier}  is a fixed-sized block in the observations $x_{1:n}$ that  fixes the Viterbi path up to
 itself: for every continuation of $x_{1:n}$, the Viterbi path up to
 the barrier remains unchanged. Therefore, if
almost every realization $x_{1:\infty}$ of $X$-process contains
infinitely many barriers, then the infinite Viterbi path exists
a.s. The main task of the paper is to exhibit the conditions (in
terms of the model) that guarantee the existence of infinite many
barriers, a.s. Having infinitely many barriers is not necessary for
existence of infinite Viterbi path (see Example \ref{Ex2}), but the
barrier-construction has several advantages. One of them is that it
allows to construct the infinite path {\it piecewise}, meaning that
to determine the first $k$ elements $v_{1:k}$ of the infinite path
it suffices to observe $x_{1:n}$ for $n$ big enough. Another great
advantage of the barriers is that under piecewise construction the
Viterbi process is typically a regenerative process. The
regenerativity allows to easily prove limit theorems to understand
the asymptotic behaviour of inferences based on Viterbi paths.

Our main construction theorems (Theorems \ref{th1} and \ref{thLSC})
generalize the piecewise construction in \cite{AVT4,AVT5}, where the
existence of Viterbi process were proven for HMM's.
 The important special case
of HMM is analysed in Subsection \ref{HMM}, but let us stress
that generalization beyond the HMM is far form being
straightforward. Moreover, we see that some assumptions of previous
HMM-theorem in \cite{AVT4} can be relaxed and the statements can be strengthened.   

The paper is organized as follows. In Subsection \ref{PMC-model}, we
introduce our model and some necessary notation; in Subsection
\ref{sec:VP}, the segmentation problem, infinite Viterbi path,
barriers and many other concepts are introduced and defined.  Also
the idea of piecewise construction is explained in detail. This
subsection also contains several examples like the above-mentioned
example of an HMM with no infinite Viterbi path (Example
\ref{noInf}). The subsection ends with the overview about the
history of the problem. In Section \ref{sec2} and \ref{sec3}, the main
barrier-construction theorems, Theorems \ref{th1} and \ref{thLSC},
are stated and proven. In Section \ref{sec:examples}, these theorems
are applied for several special cases and models: HMM's (Subsection
\ref{HMM}, discrete ${\cal X}$ (Subsection \ref{disc}) and linear
Markov switching model (Subsection \ref{sec:LMSW}).

\subsection{Pairwise Markov model} \label{PMC-model}
Let the observation-space $\mathcal{X}$ be a
separable metric space equipped with its Borel $\sigma$-field
$\B(\X)$. Let the state-space be $\mathcal{Y}=\{1,2,\ldots,|\mathcal{Y}|\}$, where $|\Y|$ is some
positive integer. We denote $\mathcal{Z}= \X \times \Y$, and equip
$\Z$ with product topology $\tau \times 2^\Y$, where $\tau$ denotes
the topology induced by the metrics of $\X$. Furthermore, $\Z$ is
equipped with its Borel $\sigma$-field $\B(\Z)=\B(\X) \otimes 2^\Y$,
which is the smallest $\sigma$-field containing sets of the form $A
\times B$, where $A \in \B(\X)$ and $B \in 2^\Y$. Let $\mu$ be a
$\sigma$-finite measure on $\B(\X)$ and let $c$ be the counting
measure on $2^\mathcal{Y}$. Finally, let
\begin{align*}
q \colon \Z^2 \rightarrow \mathbb{R}_{\geq 0}, \quad (z,z') \mapsto q(z|z')
\end{align*}
be a such a measurable non-negative function that for each $z' \in \mathcal{Z}$ the function $z \mapsto q(z|z')$ is a density with respect to product measure $\mu \times c$.

We define random process $Z =\{Z_k \}_{k\geq 1} = \{(X_k,Y_k) \}_{k \geq 1}$ as a homogeneous Markov chain on the two-dimensional space $\mathcal{Z}$ having the transition kernel density $q(z|z')$.
This means that the transition kernel of $Z$ is defined as follows:
\begin{align*}
&P(Z_2 \in A|Z_1=z')= \int_A q(z|z') \, \mu \times c (dz), \quad z' \in \mathcal{Z}, \quad A \in \B(\Z).
\end{align*}
The marginal processes $\{X_k \}_{k \geq 1}$ and $\{Y_k \}_{k \geq
1}$ will be denoted with $X$ and $Y$, respectively. Following
\cite{pairwise,pairwise2,pairwise3}, we call the process $Z$ a
\textit{pairwise Markov model} (PMM). It should be noted that even though $Z$ is a Markov chain, this doesn't necessarily imply that either of the marginal processes $X$ and $Y$ are Markov chains. However, it is not difficult to see that conditionally, given $Y$, $X$ is Markov
chain, and vice-versa \cite{pairwise}.

The letter $p$ will be used to denote the various joint and conditional densities. By abuse of notation, the corresponding probability law is
indicated by arguments of $p(\cdot)$, with lower-case $x_k$, $y_k$ and $z_k$ indicating random variables $X_k$, $Y_k$ and $Z_k$, respectively. For example
\begin{align*}
p(x_{2:n}, y_{2:n}|x_1,y_1)= \prod_{k=2}^n q(x_k, y_k|x_{k-1},y_{k-1}),
\end{align*}
where $x_{2:n}=(x_2, \ldots,x_n)$ and $y_{2:n}=(y_2, \ldots,y_n)$. Sometimes it is convenient to use other symbols beside $x_k,y_k,z_k$ as the arguments
of some density; in that case we indicate the corresponding probability law using the equality sign, for example
\begin{align*}
p(x_{2:n}, y_{2:n}|x_1=x,y_1=i)=q(x_2,y_2|x,i)\prod_{k=3}^n q(x_k, y_k|x_{k-1},y_{k-1}), \quad n \geq 3.
\end{align*}
Also $p(z_1)=p(x_1,y_1)$ denotes the initial distribution density of $Z$ with respect to measure $\mu_1 \times c$, where $\mu_1$ is some $\sigma$-finite measure on $\B(\X)$. Thus the joint density of $Z_{1:n}$ is
$p(z_{1:n})=p(z_1)p(z_{2:n}|z_1)$. For every $n \geq 2$ and $i,j \in \Y$ we also denote
\begin{align} \label{pij}
p_{ij}(x_{1:n})=\max_{y_{1:n} \colon y_1=i, y_n=j} \prod_{k=2}^n q(x_k,y_k|x_{k-1},y_{k-1}), \quad x_{1:n} \in \X^n.
\end{align}
Thus
\begin{align*}
p_{ij}(x_{1:n}) =\max_{y_{1:n} \colon y_1=i, y_n=j}p(x_{2:n},y_{2:n}|x_1,y_1).
\end{align*}

If $p(y_2|x_1,y_1)$ doesn't depend on $x_1$, and
$p(x_2|y_2,x_1,y_1)$ doesn't depend on neither $x_1$ nor $y_1$, then
$Z$ is called a \textit{hidden Markov model} (HMM). In that case, denoting
\begin{align*}
&p_{ij}=p(y_2=j|y_1=i),\quad f_{j}(x)=p(x_2=x|y_2=j),
\end{align*}
the transition kernel density factorizes into
\begin{align*} 
q(x, j|x',i)&=p(x_2=x|y_2=j,x_1=x',y_1=i)p(y_2=j|x_1=x',y_1=i)=p_{ij}f_{j}(x).
\end{align*}
Density functions $f_{j}$ are also called the \textit{emission densities}. When $\X$ is discrete, then $f_j(x)=P(X_2=x|Y_2=j)$ is called the \textit{emission probability} of $x$ from state $j$.

If $p(y_2|x_1,y_1)$ doesn't depend on $x_1$, and $p(x_2|y_2,x_1,y_1)$ doesn't depend on $y_1$, then following \cite{HMMbook} we call
$Z$ a \textit{Markov switching model}. Thus HMM's constitute a sub-class of Markov switching models. In the case of Markov switching model, denoting
\begin{align*}
&f_{j}(x|x')=p(x_2=x|y_2=j,x_1=x'),
\end{align*}
the transition kernel density becomes
\begin{align*}
q(x, j|x',i)&=p_{ij}f_{j}(x|x').
\end{align*}
It is easy to confirm that in case of Markov switching model (and therefore also in case of HMM) $Y$ is a homogeneous Markov chain with transition matrix $(p_{ij})$. Most PMM's
used in practice fall into the class of Markov switching models (see e.g. \cite{HMMbook} and the references therein for the practical applications of Markov switching models).
Figure \ref{fig:PMC} depicts the dependence structure of HMM,
Markov switching model and the general PMM.
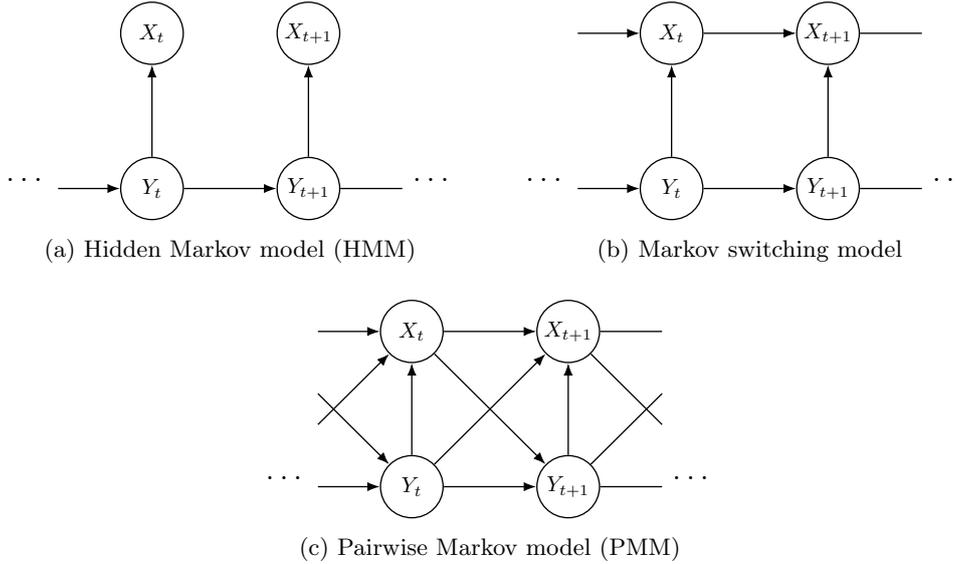
\begin{figure}
    \centering
    \begin{subfigure}[b]{0.4\textwidth}
    \resizebox{\linewidth}{!}{
\begin{tikzpicture}[auto,node distance=2.5cm,
                    semithick]
                    \tikzstyle{every state}=[minimum size=10mm,inner sep=0pt]
                   \tikzset{edge/.style = {->,> = latex'}}

  \node[state]         (Y1)              {$Y_t$};
  \node[state]         (Y2) [right of=Y1] {$Y_{t+1}$};
  \node[state]         (X1) [above of=Y1] {$X_t$};
  \node[state]         (X2) [above of=Y2] {$X_{t+1}$};
  \draw[-Latex] (Y1) -- (Y2);
  \draw[-Latex] (Y1) -- (X1);
  \draw[-Latex] (Y2) -- (X2);
 \draw[-Latex]  ++ (-1.5,0) --   (Y1);
  \draw (Y2) -- ++ (1.5,0);
\path (-2.5,0) -- (-1.5,0) node [font=\Large, midway, sloped] {$\dots$};
\path (4,0) -- (5,0) node [font=\Large, midway, sloped] {$\dots$};
\end{tikzpicture}
}
        \caption{Hidden Markov model (HMM)}
    \end{subfigure}
    \quad  
      \begin{subfigure}[b]{0.4\textwidth}
  \resizebox{\linewidth}{!}{ \begin{tikzpicture}[auto,node distance=2.5cm,
                    semithick]
                    \tikzstyle{every state}=[minimum size=10mm,inner sep=0pt]
                   \tikzset{edge/.style = {->,> = latex'}}

  \node[state]         (Y1)              {$Y_t$};
  \node[state]         (Y2) [right of=Y1] {$Y_{t+1}$};
  \node[state]         (X1) [above of=Y1] {$X_t$};
  \node[state]         (X2) [above of=Y2] {$X_{t+1}$};
  \draw[-Latex] (Y1) -- (Y2);
  \draw[-Latex] (Y1) -- (X1);
  \draw[-Latex] (Y2) -- (X2);
    \draw[-Latex] (X1) -- (X2);
 \draw[-Latex]  (-1.5,2.5) --   (X1);
 \draw[-Latex]  (-1.5,0) --   (Y1);
  \draw (Y2) -- ++ (1.5,0);
    \draw (X2) -- ++ (1.5,0);
\path (-2.5,0) -- (-1.5,0) node [font=\Large, midway, sloped] {$\dots$};
\path (4,0) -- (5,0) node [font=\Large, midway, sloped] {$\dots$};
\end{tikzpicture}
 }
        \caption{Markov switching model}
    \end{subfigure}
    
\bigskip
      \begin{subfigure}[b]{0.4\textwidth}
  \resizebox{\linewidth}{!}{ \begin{tikzpicture}[auto,node distance=2.5cm,
                    semithick]
                    \tikzstyle{every state}=[minimum size=10mm,inner sep=0pt]
                   \tikzset{edge/.style = {->,> = latex'}}

  \node[state]         (Y1)              {$Y_t$};
  \node[state]         (Y2) [right of=Y1] {$Y_{t+1}$};
  \node[state]         (X1) [above of=Y1] {$X_t$};
  \node[state]         (X2) [above of=Y2] {$X_{t+1}$};
  \draw[-Latex] (Y1) -- (Y2);
  \draw[-Latex] (Y1) -- (X1);
  \draw[-Latex] (Y2) -- (X2);
    \draw[-Latex] (Y1) -- (X2);
  \draw[-Latex] (X1) -- (Y2);
    \draw[-Latex] (X1) -- (X2);
 \draw[-Latex]  (-1.5,2.5) --   (X1);
 \draw[-Latex]  (-1.5,0) --   (Y1);
  \draw[-Latex]  (-1.5,1.5) --   (Y1);
    \draw[-Latex]  (-1.5,1) --   (X1);
  \draw (Y2) -- ++ (1.5,0);
    \draw (Y2) -- ++ (1.5,1.5);
        \draw (X2) -- ++ (1.5,-1.5);
    \draw (X2) -- ++ (1.5,0);
\path (-2.5,0) -- (-1.5,0) node [font=\Large, midway, sloped] {$\dots$};
\path (4,0) -- (5,0) node [font=\Large, midway, sloped] {$\dots$};
\end{tikzpicture}
 }        \caption{Pairwise Markov model (PMM)}
    \end{subfigure}
        \caption{Dependence graphs of different types of PMM's}\label{fig:PMC}
\end{figure}

\subsection{Viterbi path}\label{sec:VP} The {\it segmentation problem} in general
consists of guessing or estimating the unobserved realization of
process $Y_{1:n}$ -- the true path  -- given the realization
$x_{1:n}$ of the observation process $X_{1:n}$. Since the true path
cannot be exactly known,  the segmentation procedure merely consists
of finding the path that in some sense is the best approximation.
Probably the most popular estimate is the path with maximum
posterior probability. This path will be denoted with $v(x_{1:n})$
and also with $v_{1:n}$, when $x_{1:n}$ is assumed to be fixed:
$$v_{1:n}=v(x_{1:n})=\argmax_{y_{1:n}}p(y_{1:n},x_{1:n})=\argmax_{y_{1:n}}P(Y_{1:n}=y_{1:n}|X_{1:n}=x_{1:n}).$$
Typically $v_{1:n}$ is called Viterbi or MAP path (also Viterbi or
MAP alignment). Clearly $v_{1:n}$ might not be unique. As it is well
known, Viterbi path minimizes the average error over all possible
paths, when the error between two sequences is zero if they are
totally equal and one otherwise. On the other hand, Viterbi path is
not in general the one that minimizes the expected number of errors,
when the number of errors between two sequences are measured entry
by entry (Hamming metric). For more detailed discussion about the
segmentation problem and the properties of different estimates, we
refer to \cite{intech,seg,chris,Vrisk,peep}. Although these papers
deal with HMM's only, the general theory applies for any model
including PMM's.

The Viterbi path inherits its name by famous Viterbi algorithm that
is used to find the Viterbi path in the case of HMM. It is easy to
see that the algorithm also applies in the case of PMM. To see that,
denote for every $y\in {\cal Y}$
\begin{align*}
\delta_1(y)=p(x_1,y_1=y), \quad
\delta_t(y)=\max_{y_{1:t} \colon y_t=y}p(x_{1:t},y_{1:t}), \quad t\geq 2.
\end{align*}
Clearly $\delta_t(y)$ also depends on $x_{1:t}$, but in our case
the path $x_{1:n}$ is typically fixed and therefore $x_{1:t}$
is left out from the definition. The recursion behind the Viterbi
algorithm is now
\begin{align}\label{recursion}
\delta_{t+1}(y)=\max_{y'}\delta_t(y')q(x_{t+1},y|x_t,y'),\quad t=2,\ldots,n.
\end{align}
At each time $t+1$ and state $y$ the algorithm remembers the state
$y'$ achieving the maximum in \eqref{recursion} and by
backtracking from the state $v_n=\argmax_{y\in {\cal
Y}}\delta_n(y)$, the Viterbi path can be found. To avoid the
numerical underflow, the logarithmic or rescaled versions of the
Viterbi recursion can be used, see e.g. \cite{seg}.

Because Viterbi algorithm applies for PMM's as easily as for
HMM's, using Viterbi path in segmentation is appealing
computationally as well as conceptually. Therefore, to study the
statistical properties of Viterbi path-based inferences, one has to
know the long-run or typical behaviour of random vectors
$v(X_{1:n})$. As argued in \cite{AVT4}, behaviour of $v(X_{1:n})$ is
not trivial since the $(n+1)^{\mathrm{th}}$ observation can in
principle change the entire alignment based on the previous observations $x_{1:n}$. It might
happen with a positive probability that the first $n$ entries of
$v(x_{1:n+1})$ are all different from corresponding entries of
$v(x_{1:n})$. If this happens again and again, then the first
element of $v(x_{1:n})$ keeps changing as $n$ grows and there is not
such thing as limiting Viterbi path. On the other hand, it is
intuitively clear that there is a positive probability to observe
$x_{1:n}$
 such that regardless of the value of the $(n+1)^{\mathrm{th}}$ observation (provided $n$ is sufficiently large),
the paths $v(x_{1:n})$ and $v(x_{1:n+1})$ agree on first $u$
elements, where $u<n$. If this is true, then no matter what happens
in the future, the first $u$ elements of the paths remain constant.
Provided there  is an increasing unbounded sequence $u_i$
($u<u_1<u_2<\ldots$) such that the path up to $u_i$ remains
constant, one can define limiting or infinite Viterbi path. Let us
formalize the idea. In the following definition $v(x_{1:n})$ is a
Viterbi path and $v(x_{1:n})_{1:t}$ are the first $t$ elements of
the $n$-elemental vector $v(x_{1:n})$.

\begin{definition} Let $x_{1:\infty}$ be a realization of
$X$. The sequence $v_{1:\infty}\in {\cal Y}^{\infty}$ is
called \emph{infinite Viterbi path of $x_{1:\infty}$} if for any
$t \geq 1$ there exists $m(t)\geq t$ such that
\begin{equation}\label{infty}
v(x_{1:n})_{1:t}=v_{1:t},\quad \forall n\geq m(t).\end{equation}
\end{definition}

Hence $v_{1:\infty}$ is the infinite Viterbi path of
$x_{1:\infty}$ if for any $t$, the first $t$ elements of
$v_{1:\infty}$ are the first $t$ elements of a Viterbi path
$v(x_{1:n})$ for all $n$ big enough $(n\geq m(t))$. In other words,
for every $n$ big enough, there exists at least one  Viterbi path so
that $v(x_{1:n})_{1:t}=v_{1:t}$. Note that above-stated definition
is equivalent to the following: for every $t \geq 1$,
\begin{equation}\label{koond}
\lim_n v(x_{1:n})_{1:t}\to v_{1:t}.
\end{equation}
Indeed, since ${\cal Y}^t$ is finite, the convergence (\ref{koond})
holds if and only if $v(x_{1:n})_{1:t}=v_{1:t}$ eventually, and this
is exactly (\ref{infty}). For infinite ${\cal Y}$, (\ref{infty}) is
obviously much stronger than (\ref{koond}), and in this case, the
infinite Viterbi path is defined via (\ref{koond}), see
\cite{Ritov}. 

As we shall see, for many PMM's the infinite
Viterbi path exists for almost every realization of $X$. However,
the following counterexample shows that for some models the infinite
Viterbi path exists for almost no realization of $X$.

\begin{example} \label{noInf} Let $p \in (\frac{1}{2},1)$; then there exists positive integer $K $
such that taking $\epsilon=\frac{1}{2K}$, we have
\begin{align}\label{vor1}
\epsilon< 1-p-\frac{\epsilon}{2} < \dfrac{1}{2}< p-\frac{\epsilon}{2} < 1.
\end{align}
We look at the model where $\mathcal{X}=\{1,2\}$, $\mathcal{Y}=\{1,2,\ldots,K+2\}$ and the transmission matrix of $Z$ is
\begingroup
\renewcommand*{\arraystretch}{1.3}
\renewcommand{\kbldelim}{(}
\renewcommand{\kbrdelim}{)}
\[
  \kbordermatrix{
          & (1,1) & (2,1) & (1,2) & (2,2) & (1,3) & (2,3) & \ldots & (1,K+2) & (2,K+2) \\
    (1,1) & p-\frac{\epsilon}{2} & 1-p-\frac{\epsilon}{2} & 0 & 0 & \epsilon^2 & \epsilon^2 & \ldots & \epsilon^2 & \epsilon^2 \\
    (2,1) & p-\frac{\epsilon}{2} & 1-p-\frac{\epsilon}{2} & 0 & 0 & \epsilon^2 & \epsilon^2 &  \ldots & \epsilon^2 & \epsilon^2 \\
    (1,2) & 0 & 0 & 1-p-\frac{\epsilon}{2} &  p-\frac{\epsilon}{2} & \epsilon^2 & \epsilon^2 & \ldots & \epsilon^2 & \epsilon^2  \\
    (2,2) & 0 & 0 & 1-p-\frac{\epsilon}{2} &  p-\frac{\epsilon}{2} & \epsilon^2 & \epsilon^2 & \ldots & \epsilon^2 & \epsilon^2 \\
    (1,3) & 0 & 0 & 0 & 0 & \epsilon & \epsilon & \ldots & \epsilon & \epsilon \\
    (2,3)  & 0 & 0 & 0 & 0 & \epsilon & \epsilon & \ldots & \epsilon & \epsilon \\
  \vdots  & \vdots  & \vdots  & \vdots  & \vdots  & \vdots & \vdots & \ddots & \vdots & \vdots  \\
    (1,K+2) & 0 & 0 & 0 & 0 & \epsilon & \epsilon & \ldots & \epsilon & \epsilon \\
    (2,K+2)  & 0 & 0 & 0 & 0 & \epsilon & \epsilon & \ldots & \epsilon & \epsilon \\
  }.
\]
\endgroup
We assume that the initial distribution of $Z$ is such that $$P(X_1=1)=P(X_1=2)=P(Y_1=1)=P(Y_1=2)=\frac{1}{2}$$ and $X_1$ and $Y_1$ are independent.

Let's see now what are the possible Viterbi paths for some observation sequence $x_{1:n}$. We have
\begin{align} \label{pmax}
\max_{y_{1:n}}p(x_{1:n},y_{1:n})
&= \dfrac{1}{4} \max_{y_1 \in \{1,2\}, \: y_{2:n} \in  \mathcal{Y}^{n-1} } \prod_{k=2}^n q(x_k,y_k|x_{k-1},y_{k-1}) \notag\\
&=\dfrac{1}{4}\max_{y_{1:n} \in  \{1,2\}^n} \prod_{k=2}^n q(x_k,y_k|x_{k-1},y_{k-1}) \notag\\
&= \dfrac{1}{4}\max_{y_{1:n} \in  \{(1,\ldots,1),(2,\ldots,2)\}} \prod_{k=2}^n q(x_k,y_k|x_{k-1},y_{k-1}),
\end{align}
where the second equality holds by (\ref{vor1}). Let $n_i(x_{2:n})$ be the number of $i$-s in $x_{2:n}$. Since
\begin{align*}
\prod_{k=2}^n q(x_k,1|x_{k-1},1)=\left(p-\frac{\epsilon}{2}\right)^{n_1(x_{2:n})}\left(1-p-\frac{\epsilon}{2}\right)^{n_2(x_{2:n})}
\end{align*}
and
\begin{align*}
\prod_{k=2}^n q(x_k,2|x_{k-1},2)=\left(p-\frac{\epsilon}{2}\right)^{n_2(x_{2:n})}\left(1-p-\frac{\epsilon}{2}\right)^{n_1(x_{2:n})}
\end{align*}
we have by the fact that $p > 1- p$ and (\ref{pmax}), that Viterbi path of $x_{1:n}$ can be expressed as
\begin{align} \label{vitX}
v(x_{1:n})=\begin{cases}(1,\ldots,1), & \mbox{if $n_1(x_{2:n})\geq n_2(x_{2:n})$}\\
(2,\ldots,2), & \mbox{else}
\end{cases}.
\end{align}

Now, let's take a closer look at the behaviour of $Z$. Note that at some time, let's say at time $T$, $Y$ moves to state space $\{ 3,\ldots,K+2 \}$.
Time $T$ is a random variable that is almost surely finite. Before time $T$, $Y$ is constantly in the state $1$ or constantly in the state $2$
(both possibilities having probability $\frac{1}{2}$). After and at time $T$, $Y$ is always in $\{ 3,\ldots,K+2 \}$. Note that $\{X_{T+k} \}_{k \geq 0}$ is an i.i.d. Bernoulli sequence with parameter $\frac{1}{2}$. Let
\begin{align*}
S_k=n_1(X_{2:k})-n_2(X_{2:k}), \quad k\geq 2.
\end{align*}
Random process $\{S_{T+k} \}_{k \geq 0}$ is a simple symmetric random walk with random starting point. Therefore the process $\{S_k \}_{k \geq 2}$ will almost surely fall below zero i.o. and rise above zero i.o. Together with (\ref{vitX}) this implies that almost no realization of $X$ has an infinite Viterbi path.

It is easy to confirm that this model is HMM with
\begin{align*}
&p_{11}=p_{22}=1-\epsilon, \quad p_{12}=p_{21}=0,\\
&p_{1k}=p_{2k}=\dfrac{\epsilon}{K}, \quad p_{k1}=p_{k2}=0, \quad k=3,\ldots,K+2,\\
&p_{kl}=\dfrac{1}{K}, \quad  k,l \in \{3,\ldots,K+2\},
\end{align*}
and
\begin{align*}
&f_{2}(2)=f_{1}(1)=\dfrac{1}{1-\epsilon}\left(p-\dfrac{\epsilon}{2} \right), \quad f_{1}(2)=f_{2}(1)=\dfrac{1}{1-\epsilon}\left(1-p-\dfrac{\epsilon}{2} \right),\\
&f_k(1)=f_k(2)=\dfrac{1}{2}, \quad k=3,\ldots,K+2.
\end{align*}
\end{example}

\paragraph{Nodes.} Suppose now $x_{1:\infty}$ is such that infinite
Viterbi path exists. It means that for every time $t$, there exists
time $m(t)\geq t$ such that the first $t$ elements of $v(x_{1:n})$
are fixed as soon as $n\geq m$. Note that if $m(t)$ is such a time,
then $m(t)+1$ is such a time too. Theoretically, the time $m$ might
depend on the whole sequence $x_{1:\infty}$. This means that after
observing the sequence $x_{1:m}$, it is not yet clear, whether the
first $t$ elements of Viterbi path are now fixed (for any
continuation of $x_{1:m}$) or not. In practice, one would not like
to wait infinitely long, instead one prefers to realize that the
time $m(t)$ is arrived right after observing $x_{1:m}$. In this
case, the (random) time $m(t)$ is the stopping time with respect to
the observation process. In particular, it means the following: for
every possible continuation $x_{m+1:n}$ of $x_{1:m}$, the Viterbi
path at time $t$ passes the state $v_t$, let that state be $i$. This
requirement is fulfilled, when the following holds: for every two states
$j,k\in {\cal Y}$
\begin{equation}\label{r-node}
\delta_t(i)p_{ij}(x_{t:m})\geq \delta_t(k)p_{kj}(x_{t:m}),
\end{equation}
where $p_{ij}(\cdot)$ is defined in (\ref{pij}). Indeed,
 there might be several states satisfying
(\ref{r-node}), but the ties can always be broken in favour of the
state $i$, so that whenever $n \geq m$, there is at least one
Viterbi path $v(x_{1:n})$ that passes the state $i$ at time $t$.
 Therefore, if at time $t$, there is a state  $i$ satisfying (\ref{r-node}), then $m$ is the time
$m(t)$ required in (\ref{infty}) and it depends on $x_{1:m}$ only.
\begin{definition} Let $x_{1:m}$ be a vector of observations.  If equalities (\ref{r-node}) hold for any pair of states $j$ and $k$, then the
time  $t$  is called  an {\rm $i$-node of order $r=m-t$.}
Time $t$ is called a {\rm strong $i$-node of order $r$}, if it is an $i$-node of order $r$, and
the inequality (\ref{r-node}) is strict for any $j$ and $k\ne i$ for which the left side of the inequality is positive. We
call $t$ a node of order $r$ if for some $i$, it is an $i$-node of
order $r=m-t$.
\end{definition}
The definition of node is a straightforward generalization of the
corresponding definition in \cite{AVT4,AVT5,K2}. Note that when
$t$ is a node of order $r$, then ${t-1}$ is a node of order
$r+1$.
\begin{example}\label{Ex2} Following is an example of a model, for which infinite Viterbi path always exists, but no nodes ever occur. Let $Z$ be a HMM with ${\cal Y}={\cal
X}=\{1,2\}$,  transition matrix of $Y$ being identity. Suppose $p \in (\frac{1}{2},1)$ and let emission probabilities be
\begin{align*}
f_{1}(1)=f_2(2)=p, \quad f_1(2)=f_2(1)=1-p.
\end{align*}
Let the initial distribution be uniform. This trivial model picks parameter
$p$ or $1-p$ with probability ${1\over 2}$ and then an i.i.d.
Bernoulli sample with chosen probability. Let, for any $x_{1:t}\in
\{0,1\}^t$, $n_i(x_{1:t})$ be the number of $i$-s in $x_{1:t}$. Now clearly for any $n \geq 1$
$$v(x_{1:n})=\begin{cases}
                 (1,\ldots,1), & \hbox{if $n_1(x_{1:t})\geq n_2(x_{1:t})$} \\
                 (2,\ldots,2), & \hbox{else}
               \end{cases}.
$$ Since by SLLN
$${n_2(X_{1:n})\over n}\to \begin{cases}
                               p, & \hbox{if $Y_1=2$} \\
                               1-p, & \hbox{if $Y_1=1$}
  \end{cases} \quad \mbox{a.s.,}
$$
we see that for almost every realization of $X$ the
infinite Viterbi path exists. This infinite path is constantly 1 if
$Y_1=1$ and constantly 2 if $Y_1=2$. Surely, for any $t$, there
exists $m(t)$ such that (\ref{infty}) holds, but in this case $m(t)$
depends on the whole sequence $x_{1:\infty}$, because for any
$x_{1:m}$ one can find a continuation $x_{m+1:n}$ such that
$v(x_{1:t})\ne v(x_{1:n})_{1:t}$. This implies that there cannot
be any nodes in any sequence $x_{1: \infty}$. Indeed, for any $x_{t+1:m}$, it
holds $p_{12}(x_{t:m})=p_{21}(x_{t:m})=0,$ and so inequalities
(\ref{r-node}) cannot hold.
\end{example}
\paragraph{Barriers.}
The goal of the present paper is to find sufficient conditions for
almost every realization of observation process to have infinitely many nodes. Whether a time
$t$ is a node of order $r$ or not depends, in general, on the
sequence $x_{1:t+r}$. Sometimes, however, there is some small
block of observations that guarantees the
existence of a node regardless of the other observations. Let
us illustrate this by an example.

\begin{example} Suppose that   there exists a state $i\in {\cal Y}$
such that for any triplet $y_{t-1},y_t,y_{t+1} \in {\cal Y}$
\begin{equation}\label{tr-barrier}
q(x_t,i|x_{t-1},y_{t-1})q(x_{t+1},y_{t+1}|x_t,i)\geq q(x_t,y_t|x_{t-1},y_{t-1})q(x_{t+1},y_{t+1}|x_t,y_t).
\end{equation}
Then
\begin{align*}
\delta_t(i)q(x_{t+1},y_{t+1}|x_t,i)&=\max_{y'}\delta_{t-1}(y')q(x_t,i|x_{t-1},y')q(x_{t+1},y_{t+1}|x_t,i)\\
&\geq
\max_{y'}\delta_{t-1}(y')q(x_t,y_t|x_{t-1},y')q(x_{t+1},y_{t+1}|x_t,y_t)\\
&=\delta_t(y_t)q(x_{t+1},y_{t+1}|x_t,y_t).\end{align*}
We thus have that $t$ is an
$i$-node of order 1, because for every pair $j,k \in \Y$
$$\delta_t(i)p_{ij}(x_t,x_{t+1})\geq
\delta_t(k)p_{kj}(x_t,x_{t+1}).$$ Whether (\ref{tr-barrier}) holds
or not, depends on triplet $(x_{t-1},x_t,x_{t+1})$. In case of Markov switching model, (\ref{tr-barrier}) is
\begin{align*}
p_{y_{t-1}i}f_{i}(x_t|x_{t-1}) \cdot p_{i y_{t+1}} f_{y_{t+1}}(x_{t+1}|x_t) \geq p_{y_{t-1}y_t}f_{y_t}(x_t|x_{t-1}) \cdot p_{y_t y_{t+1}} f_{y_{t+1}}(x_{t+1}|x_t).
\end{align*}
And in a more special case of HMM, (\ref{tr-barrier}) is equivalent
to
\begin{align}\label{1-barHMM}
p_{y_{t-1}i}f_i(x_t) \cdot p_{i y_{t+1}} \geq p_{y_{t-1}y_t}f_{y_t}(x_t) \cdot p_{y_t y_{t+1}} .
\end{align}
\end{example}
The inequalities (\ref{1-barHMM}) have very clear meaning -- when
the observation $x_t$ has relatively big probability of  being
emitted from state $i$ (in comparison of being emitted from any
other state),  then regardless of the observations before or after
$x_t$, time $t$ is  a node. In particular, this is the case when the
supports of the emission distributions are different and $x_t$ can
be emitted from one state, only. On the other hand, for many models,
there are no such $x_t$ possible, so (\ref{1-barHMM}) is rather an
exception than a rule.
\begin{definition} Given $i\in \Y$, $b_{1:M}$  is called an
{\rm (strong) $i$-barrier of order $r$ and length $M$}, if, for any
$x_{1:\infty}$ with  $x_{m-M+1:m}=b_{1:M}$   for some $m\geq M$,
${m-r}$  is an (strong) $i$-node of order $r$.
\end{definition}
Hence, if (\ref{tr-barrier}) holds, then the triplet
$(x_{t-1},x_t,x_{t+1})$ is an $i$-barrier of order 1 and length 3.
In what follows, we give some sufficient conditions that guarantee
the existence of infinitely many barriers  in almost every
realization of $X$. More closely, we construct a set ${\cal
X}^*\subset{\cal X}^M$ such that every vector $x_{1:M}$ from ${\cal
X}^*$ is $i$-barrier of order $r$ for a given state $i\in \mathcal{Y}$, and $P(X \in \mathcal{X}^* \mbox{ i.o.})=1$, where
\begin{align*}
\{ X \in \mathcal{X}^* \mbox{ i.o.} \} \DEF \bigcap_{k=1}^\infty \bigcup_{l=k}^\infty \{X_{l:l+M-1} \in \mathcal{X}^* \} .
\end{align*}
Since every  barrier contains a $r$-order $i$-node, having infinitely many
barriers in $x_{1:\infty}$ entails infinitely many $i$-nodes of
order $r$, let the locations of these nodes be $u_1<u_2<\cdots$. Let
$m \geq u_2+r$.  There must exist a Viterbi path
$v(x_{1:m})$ passing state $i$ at time $u_1$. There also exists a
Viterbi path passing $i$ at time $u_2$. If $v(x_{1:m})$ is unique,
then the path passes $i$ at both times, but if Viterbi path is
not unique and  $u_1$ and $u_2$ are too close to each other, then
there might not be possible to break ties in favour of $i$ at $u_1$
and $u_2$ simultaneously, as is shown in the following example.

\begin{example} Let $Z$ be HMM with $ |\mathcal{Y}| \geq 4$ and let for some $\epsilon \in (0,1)$
\begin{align*}
&p_{ij}=\epsilon, \quad (i,j) \in 2^{\{1,2\}}=\{(1,1),(1,2),(2,1),(2,2)\},\\
&p_{13}=p_{24}=p_{41}=p_{32}=\epsilon, \quad p_{14}=p_{23}=p_{42}=p_{31}=0.
\end{align*}
Also, let $\mathcal{X}$ be finite, and let $1,2 \in \mathcal{X}$ be such that for some $p \in (0,1)$
\begin{align*}
&f_1(1)=f_2(1)=p; \quad f_k(1)=0, \quad k \in \mathcal{Y} \setminus \{1,2\};\\
&f_3(2)=f_4(2)=p; \quad f_k(2)=0, \quad k \in \mathcal{Y} \setminus \{3,4\}.
\end{align*}
Thus, whenever $X_t=1$, then $Y_t \in \{1,2\}$, and whenever $X_t=2$, then $Y_t \in \{3,4\}$ ($t\geq 2$). Note that the word $(1,1,2,1,1)$ is 1- and 2-barrier of length 5 and of order 3. To see this, note that when for some $t$, $x_{t:t+4}=(1,1,2,1,1)$ then for all $k,j \in \mathcal{Y}$
\begin{align*}
\delta_{t+1}(k)p_{kj}(1,2,1,1)=\delta_t(i^*)p^4\epsilon^4 \cdot \mathbb{I}_{\{1,2\}}(k)\mathbb{I}_{\{1,2\}}(j),
\end{align*}
where $i^*=\argmax_{l \in \{1,2\}}\delta_t(l)$ and $\mathbb{I}$ denotes the indicator function. Thus time $t+1$ is an $i$-node of order $3$ for $i \in \{1,2\}$: for all $k,j \in \Y$
\begin{align*}
\delta_{t+1}(i)p_{ij}(1,2,1,1)&=\delta_t(i^*)p^4\epsilon^4 \cdot \mathbb{I}_{\{1,2\}}(i)\mathbb{I}_{\{1,2\}}(j) \\
&\geq  \delta_t(i^*)p^4\epsilon^4 \cdot \mathbb{I}_{\{1,2\}}(k)\mathbb{I}_{\{1,2\}}(j)\\
&=\delta_{t+1}(k)p_{kj}(1,2,1,1).
\end{align*}
This means that the word $(1,1,2,1,1)$ is indeed a 1- and 2-node of order 3.

Similarly, $(2,1,1,1,1)$ is also 1- and 2-barrier of length 5 and of order 3. Assuming that $Y$ is irreducible, we have that the word $(1,1,2,1,1,1,1)$ occurs in $X$ infinitely many times. Suppose now that $t$ is such that $x_{t:t+6}$ is equal to that word. Then $t+1$ and $t+3$ are both 1- and 2-nodes. Now, breaking ties at these locations differently (to 1 at $t+1$ and to 2 at $t+3$, or vice-versa) is acceptable. But breaking ties to the same value (either both to 1 or both to 2) will result in a zero-likelihood path. Indeed, for $y_{t+1:t+3} \in \{1,2\} \times \Y \times \{1,2\}$
\begin{align*}
p(x_{t+2:t+3},y_{t+2:t+3}|x_{t+1},y_{t+1})&=p_{y_{t+1}y_{t+2}}f_{y_{t+2}}(2) \cdot p_{y_{t+2}y_{t+3}}f_{y_{t+3}}(1)\\
&=\epsilon^2 \mathbb{I}_{ \{(1,3,2) \} }(y_{t+1:t+3}) \cdot \mathbb{I}_{\{ (2,4,1) \}}(y_{t+1:t+3}) \cdot p^2.
\end{align*}
\end{example}

This problem does not occur, if the nodes are strong or if  $u_2
\geq u_1+r$. Indeed, since $u_1$ is an $i$-node of order $r$, then
by definition of $r$-order node, between times $u_1$ and $u_1+r+1$,
the ties can be broken so that whatever state the Viterbi path
passes at time $u_2$, it passes $i$ at time $u_1$. Thus, if the
locations of nodes  $u_1<u_2<\cdots$ are such that $u_k\geq
u_{k-1}+r$ for all $k\geq 2$, it is possible to construct the infinite Viterbi path so that it passes the state $i$ at every time
$u_k$. In what follows, when the nodes $u_k$ and $u_{k-1}$ are such
that $u_k\geq u_{k-1}+r$, then the nodes are called
\textit{separated}. Of course, there is no loss of generality in
assuming that the nodes $u_1<u_2<\cdots$ are separated, because from
any non-separated sequence of nodes it is possible to pick a separated
subsequence. Another approach is to enlarge the
barriers so that two barriers cannot overlap and, therefore, are
separated. This is the way barriers are defined in \cite{AVT5}.

\paragraph{Construction of infinite Viterbi path.}
Having infinitely many separated nodes $u_1<u_2<\cdots$ or
order $r$, it is possible to construct the infinite Viterbi path
{\it piecewise}. Indeed, we know that for every $n\geq u_k+r$, there is
a Viterbi path $v_{1:n}=v(x_{1:n})$ such that $v_{u_j}=i$, $j=1,\ldots,k$.
Because of that property  and by optimality principle  clearly the
piece $v_{u_{j-1}:u_j}$ depends on the observations
$x_{u_{j-1}:u_j}$, only. Therefore $v_{1:\infty}$ can be constructed in the
following way: first use the observations $x_{1:u_1}$ to find the
first piece $v_{1:u_1}$ as follows:
$$v_{1:u_1}=\argmax_{y_{1:u_1} \colon y_{u_1}=i}p(x_{1:u_1},y_{1:u_1}).$$
Then use  $x_{u_1:u_2}$ to find the second piece $v_{u_1:u_2}$ as
follows:
$$v_{u_1:u_2}=\argmax_{y_{u_{1}:u_2} \colon y_{u_1}=y_{u_2}=i }p(x_{u_1:u_2},y_{u_1:u_2}),$$
and so on. Finally use $x_{u_k:n}$ to find the last piece
$v_{u_k:n}$ as follows:
$$v_{u_k:n}=\argmax_{y_{u_{k}:n} \colon y_{u_k}=i}p(x_{u_k:n},y_{u_{k}:n}).$$
The last piece $v_{u_k:n}$ might change as $n$ grows, but the rest
of the Viterbi path is now fixed. Thus, if $x_{1:\infty}$ contains
infinitely many nodes, the whole infinite path
can be constructed piecewise.

If the nodes $u_k$ are strong (not necessarily
separated) then the piecewise construction detailed above is
achieved when the Viterbi estimation is done by a
lexicographic or co-lexicographic tie-breaking scheme induced by
some ordering on $\Y$. Indeed, since the $i$-nodes $u_k$ are strong,
we know that regardless of tie-breaking scheme $v(x_{1:n})_{u_k}=i$
for all $k \geq 1$ and $n \geq u_k+r$. Therefore the lexicographic
ordering ensures that for all $k \geq 1$ and $n \geq u_k+r$
\begin{align*}
v_{1:n}=\argmax_{y_{1:n}} p(x_{1:n},y_{1:n})=(\argmax_{y_{1:u_k}} p(x_{1:u_k},y_{1:u_k}), \argmax_{y_{u_{k}+1:n}} p(x_{u_k:n},y_{u_k}=i, y_{u_{k}+1:n})).
\end{align*}
This shows that $v(x_{1:n})_{1:u_k}$ is independent of $n \geq u_k+r$ for all $k \geq 1$ and so the infinite Viterbi path is well-defined.
\paragraph{Viterbi process.} The notion of infinite Viterbi path of a fixed realization $x_{1:\infty}$ naturally carries over to an infinite Viterbi path of $X$,
called the \textit{Viterbi process}. Formally,
this process is defined as follows.
\begin{definition} A random process $V=\{V_{k}\}_{k \geq 1}$ on space $\Y$ is called a \emph{Viterbi process}, if the event $\{V \mbox{ is not an infinite Viterbi path of }X \}$
is contained in a set of zero probability measure.
\end{definition}
If there exists a barrier set $\X^*$ consisting of $i$-barriers of fixed
order and satisfying $P(X \in \X^* \mbox{ i.o.})=1$, then the
Viterbi process can be constructed by applying the piecewise
construction detailed above to the process $X$. However, this construction has a
serious weakness: it requires that the ties are broken in each piece
of the piecewise path separately, which means that to obtain the
correct Viterbi path (corresponding to the Viterbi process) one has
to first identify the barriers in the observation sequence. In
practice, this type of tie-breaking mechanism would  complicate
implementation of the Viterbi path estimation and add significantly
to its computational cost. The solution to this problem is to allow
only strong barriers, in which case, as we saw above, the piecewise
tie-breaking can be replaced with lexicographic or co-lexicographic
tie-breaking. Fortunately, as we will see
later, the requirement of strong barriers as opposed to simply
barriers does not seem to be restrictive.

Proving the existence of the Viterbi process is the main motivation for barrier set
construction. Once it is established that the Viterbi process $V=\{V_k\}_{k \geq 1}$ exists, the next step is to study its probabilistic
 properties. An important and very useful property
 is that the process $(Z,V)=\{(Z_k, V_k)\}_{k \geq 1}$ is
 regenerative. In case of HMM, this is achieved in \cite{AVT4,Vrisk} by, roughly speaking, constructing
 regeneration times for $Z$ which are also nodes.  When it is ensured that $(Z,V)$ is regenerative, the standard theory for regenerative processes can be applied.
 See \cite{AVT4, AVacta, Vrisk,iowa} for regeneration-based
 inferences of HMM. It is important to stress that the regenerativity of $(Z,V)$ is
 possible due to the existence of barriers  and  that is an
 extra motivation of barrier construction studied in this paper. Note that the Viterbi process in Example 1.2 is not regenerative. Regenerativity of $(Z,V)$ in case of general PMM's is subject to
authors' continuing investigation; in the present paper we only deal
with the existence of $V$.

\paragraph{History of the problem.} To our best knowledge, so
far the existence of Viterbi process has been proven in the case of
HMM's only. The first attempts in that directions have made by A.
Caliebe and U. R\"osler in \cite{caliebe1,caliebe2}. They
essentially define the concept of nodes and prove the existence of
infinitely many nodes under rather restrictive assumptions like
(\ref{1-barHMM}). For an overview of the main results in
\cite{caliebe1,caliebe2}  as well as for the discussion about their
assumption, see \cite{AVT4,AVT5}. For HMM, the most general
conditions for the existence of infinitely many barriers were given
in Lemma 3.1 of \cite{AVT5} (the same lemma is also Lemma 3.1 in
\cite{AVT4}). Let us now state that lemma.

Recall that in the case of HMM $f_i$ are the emission densities with respect to measure $\mu$. Denote
\begin{align} \label{Gi}
G_i=\{x \in \X \: | \: f_i(x)>0\}, \quad i \in \Y.
\end{align}
A subset $C\subset\Y$ is called a \emph{cluster}, if
\begin{equation} \label{HMM-clustermu}
\mu\left(\cap _{i\in C}G_i\right )>0\quad {\rm and}\quad
\mu \left[ \left (\cap _{i\in C}G_i\right) \cap \left (\cup _{i\notin C}G_i \right) \right]=0,
\end{equation}
Distinct clusters need not be
disjoint and  a cluster can consist of a single state. In this
latter case such a state is not hidden, since it is indicated by any
observation it emits. When the number of states is two, then $\Y$ is
the only cluster possible, since otherwise all observations would
reveal their states and the underlying Markov chain would cease to
be hidden.

\begin{theorem} \label{HMMTh} {\rm (Lemma 3.1 in \cite{AVT5})} Suppose $Z$ is stationary HMM satisfying the following conditions.
\begin{enumerate}[label=(\roman*)]
\item \label{A1HMM} For each state $j\in \Y$
\begin{align}\label{lll}
&\mu\left(\left\{x\in\mathcal{X} \: | \: f_j(x)p_{ \Cdot j}> \max_{i\in \Y,~i\ne
j}f_i(x)p_{\Cdot i}\right\}\right)>0,\quad\text{where}~p_{\Cdot j}\stackrel{\scriptsize{\emph{def}}}{=}
\max_{i\in \Y}p_{ij}.
\end{align}
\item \label{A2HMM} There exists a
cluster $C\subset\mathcal{Y}$ such that the sub-stochastic matrix
$\mathbb{P}_C=(p_{ij})_{i,j\in C}$ is {\em primitive}, that is $\mathbb{P}^R_C$ has only positive elements for some positive
integer $R$.
\end{enumerate}
Also let Markov chain $Y$ be irreducible and aperiodic. Then for $i \in \Y$ there exists a barrier set $\X^* \subset\X^M$, $M \geq 1$, consisting of
$i$-barriers of fixed order and satisfying $P(X_{1:M} \in \X^*)>0$.
\end{theorem}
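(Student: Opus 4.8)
The plan is to construct the barrier block $b_{1:M}$ as a concatenation of three functional segments, and to take $\X^*$ to be the set of all $x_{1:M}$ whose coordinates lie in prescribed positive-$\mu$-measure ``good sets.'' Throughout write $q(x,j|x',l)=p_{lj}f_j(x)$ and recall that the Viterbi recursion \eqref{recursion} reads $\delta_{t+1}(y)=f_y(x_{t+1})\max_{y'}\delta_t(y')p_{y'y}$. A time $t$ is an $i$-node of order $r=m-t$ precisely when \eqref{r-node} holds for all $j,k\in\Y$; since $\delta_t$ depends only on $x_{1:t}$ while $p_{\cdot\cdot}(x_{t:m})$ depends only on $x_{t:m}$, it suffices to make the block itself force \eqref{r-node} no matter what the unknown prefix is.

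\emph{First segment (confinement and mixing).} Let $g=\big(\cap_{l\in C}G_l\big)\setminus\big(\cup_{l\notin C}G_l\big)$; by the cluster condition \eqref{HMM-clustermu}, $\mu(g)>0$, and any observation in $g$ has $f_l=0$ for $l\notin C$, hence kills $\delta$ off $C$ and confines the path to $C$. I would place $L\geq R+1$ coordinates in $g$: using primitivity of $\mathbb{P}_C$ (condition \ref{A2HMM}, $\mathbb{P}_C^R>0$), after this segment every state of $C$ is reached along a within-cluster path of weight bounded below by a strictly positive model- and block-dependent constant, while trivially bounded above by the running maximum. Thus the numbers $\{\delta(y)\}_{y\in C}$ become mutually comparable, $\delta(y)\geq\kappa\max_{y'}\delta(y')$ for some $\kappa\in(0,1]$, and this comparability is \emph{uniform over the prefix}: the remote past only rescales all of them by one common factor, which cancels in every ratio. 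If $i\notin C$, I first prepend a fixed within-support transition path driving the chain from $C$ to $i$, available by irreducibility.

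\emph{Second segment (selection of $i$) and node verification.} Here I invoke condition \ref{A1HMM}: the set $g_i=\{x:f_i(x)p_{\Cdot i}>\max_{k\neq i}f_k(x)p_{\Cdot k}\}$ has $\mu(g_i)>0$. Feeding observations from $g_i$ exploits the strict inequality to push $\delta(i)$ above every competing $\delta(k)$, and by repeating such observations the margin $\theta:=\min_k \delta_t(i)/\delta_t(k)$ can be driven above any prescribed threshold, the comparability from the first segment being exactly what renders this bound prefix-independent. Finally I append $r'$ further coordinates in $g$, giving a post-node block $x_{t:m}$ for which, by primitivity again, every ratio $p_{kj}(x_{t:m})/p_{ij}(x_{t:m})$ is finite and bounded above by a constant $\Lambda$ depending only on the model and the block. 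Choosing the number of $g_i$-observations so that $\theta\geq\Lambda$ then yields \eqref{r-node} for all $j,k$: it is automatic when $\delta_t(k)=0$ and follows from $\theta\geq\Lambda$ otherwise. Hence $t=m-r$ is an $i$-node of order $r$ whenever $b_{1:M}$ occurs, so $b_{1:M}$ is an $i$-barrier; taking $\X^*$ to be the product of the corresponding good sets, the positivity $P(X_{1:M}\in\X^*)>0$ follows from stationarity together with irreducibility and aperiodicity of $Y$, which make the prescribed state path and each emission event jointly positive.

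The main obstacle is the second segment. Condition \ref{A1HMM} is phrased through the all-states maxima $p_{\Cdot j}=\max_l p_{lj}$, whereas after confinement the recursion only sees the \emph{within-cluster} maxima $\max_{y'\in C}\delta_t(y')p_{y'j}$; reconciling these, and proving that $\theta$ can genuinely be amplified past the fixed future-ratio bound $\Lambda$ uniformly over the unknown prefix, is the technical heart of the argument. This is precisely the step where conditions \ref{A1HMM} and \ref{A2HMM} must be combined rather than used in isolation, and where the construction improves on the one-step barrier \eqref{1-barHMM}.
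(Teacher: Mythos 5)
Your outline reproduces the overall architecture of the paper's construction (the barrier in the proof of Corollary \ref{HMMcor} is exactly ``cluster-confinement segment $+$ middle segment built from the positive-measure sets of condition \ref{A1HMM} $+$ cluster-confinement segment''), but the mechanism you propose for the middle segment contains a genuine gap, and it is the one you yourself flag at the end as ``the technical heart of the argument''. The problem is that the amplification claim --- that feeding observations from $g_i$ drives $\theta=\min_k\delta_t(i)/\delta_t(k)$ above any prescribed threshold --- does not follow from condition \ref{A1HMM}. One step of the recursion gives $\delta_{t+1}(i)=f_i(x)\max_{y'}\delta_t(y')p_{y'i}$, and after your confinement segment the maximum effectively runs over $y'\in C$ only; condition \ref{A1HMM} controls $f_i(x)p_{\Cdot i}$ with $p_{\Cdot i}=\max_{y'\in\Y}p_{y'i}$, and if the state realizing $p_{\Cdot i}$ lies outside $C$ (or more generally outside the set of states where $\delta_t$ is currently comparable to its maximum), the hypothesis $f_i(x)p_{\Cdot i}>f_k(x)p_{\Cdot k}$ yields no lower bound on $\delta_{t+1}(i)$ relative to $\delta_{t+1}(k)$: one only gets $\delta_{t+1}(i)/\delta_{t+1}(k)\geq\kappa\,p^{C}_{\Cdot i}/p_{\Cdot i}$ with $p^{C}_{\Cdot i}=\max_{y'\in C}p_{y'i}$ possibly much smaller than $p_{\Cdot i}$, so the ratio can shrink at each step rather than grow. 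Moreover, once you leave the confinement segment the observations in $g_i$ may revive states outside $C$, so the ``uniform comparability'' you established is not preserved into the second segment. A proof that defers exactly this step is not a proof; without it the choice ``number of $g_i$-observations so that $\theta\geq\Lambda$'' cannot be made.

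The paper resolves this by a different device which never compares $\delta$-values across an unknown prefix. In the proof of Corollary \ref{HMMcor} the middle segment is built around the cycle obtained by iterating $j_k=\argmax_j p_{jj_{k-1}}$, so that along the resulting path $i_{1:n}$ one has $p_{i_{k-1}i_k}=p_{\Cdot i_k}$ at \emph{every} step; condition \ref{A1HMM} then gives a term-by-term domination $(1-\epsilon)\prod_k p_{i_{k-1}i_k}f_{i_k}(x_k)>\prod_k p_{y_{k-1}y_k}f_{y_k}(x_k)$ for all competing paths, i.e.\ inequalities on the prefix-free block quantities $p_{ij}(x_{1:n})$ (conditions \textbf{A1}/\textbf{A1'}), with no reference to the cluster at all. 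The cluster and primitivity enter only at the two ends of the barrier, to produce the uniform bounds $\delta\leq p_{i1}\leq\Delta$ of \textbf{A2}; the mismatch between all-states and within-cluster maxima is then absorbed not by amplifying a $\delta$-ratio but by repeating the dominating cycle $2N$ times so that $(\Delta/\delta)(1-\epsilon)^N<1$ (condition \textbf{A3}), after which Theorem \ref{th1} shows any maximizing path must pass through the cycle state in the middle of the block, and Proposition \ref{barPr} converts this into the node property \eqref{r-node}. If you want to salvage your approach, you would essentially have to rebuild this argument: replace the pointwise margin $\theta$ by the block quantities $p_{ij}(\cdot)$ and choose the middle-segment state to be one realizing the argmax cycle rather than an arbitrary $i$ reached from $C$ by irreducibility.
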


Since stationary HMM with irreducible and aperiodic $Y$ is
ergodic, it immediately follows from this theorem that under the
specified conditions almost every realization has infinitely many
barriers and piecewise construction of Viterbi process is possible.

The assumptions \ref{A1HMM} and \ref{A2HMM}  are discussed in
details in \cite{AVT4,AVT5}. Let us just mention that they are both
natural and hold in the most models in practice. In particular,
\ref{A2HMM}  is much weaker than the common assumption of having all
entries in transition matrix $(p_{ij})$ positive. It
turns out that under \ref{A2HMM}, it is possible to generalize the
existing results of exponential forgetting properties of smoothing
probabilities for HMM' \cite{Vsmoothing}. This property has nothing
to do with Viterbi paths so that \ref{A2HMM} is in a sense a natural
and desirable property from many different aspects. For another
application of \ref{A2HMM}, see \cite{peep}. The generalization of
\ref{A2HMM} in the case of PMM's is the condition \ref{Bcluster} in
Theorem \ref{thLSC} and, just like \ref{A2HMM}, also \ref{Bcluster} might
be useful for proving many other properties of PMM besides the
existence of infinite Viterbi path. It also turns out that in the
special case of 2-state HMM, both assumptions can be relaxed: namely
an irreducible aperiodic 2-state Markov chain has always primitive
transition matrix (but not necessarily having all entries positive
as it is incorrectly stated in \cite{K2}), and as argued above, the
cluster assumption \ref{A2HMM} trivially holds. It has been shown in
\cite{K2}, that for 2-state stationary HMM, almost every realization
has infinitely many barriers if
\begin{align} \label{2uneq}
\mu \left( \{ x \in \X \: | \: f_1(x) \neq f_2(x) \} \right)>0.
\end{align}
Obviously, the assumption  (\ref{2uneq}) is most natural for any HMM, so
essentially the result says that Viterbi process exists for any two-state stationary HMM.

Theorem \ref{HMMTh} does have one weakness: it does not guarantee that the barrier set $\X^*$ consists of \textit{strong} barriers. As we saw earlier, having infinitely many
\textit{strong} barriers (as opposed to simply barriers) is a very desirable property. We rectify this issue in section \ref{HMM}, where we prove a generalized version of Theorem \ref{HMMTh},
which guarantees that the barrier set $\X^*$ consists of strong barriers.

Finally we would like to add a few words on the stationarity assumption of Theorem \ref{HMMTh}. In case of HMM, this assumption is not very restrictive, since the stationary distribution of $Z$ can easily
be expressed trough the stationary distribution of $Y$. More specifically, if $(\pi_i)$ is the stationary distribution of $Y$, then
the stationary density of $Z$ is given by $p(x_1,y_1) =\pi_{y_1} f_{y_1}(x_1)$.
 For stationary HMM thus the stationary density can be easily calculated and hence the Viterbi algorithm is easy to implement. However, in general case
 of PMM we often do not have a way to calculate the stationary density
 (if it exists), and so the stationarity assumption becomes more restrictive. Therefore in the present paper we abandon this assumption altogether. This means
 that we can no longer rely on ergodicity of $Z$ to ensure that
 $P(X \in \X^* \mbox{ i.o.})=1$ for the barrier set $\X^*$ -- instead we apply the theory of \textit{Harris recurrent} Markov chains.

When the hidden state space $\Y$ is infinite, then the infinite Viterbi path of $x_{1:\infty}$ is defined trough convergences (\ref{koond}). The Viterbi process
is then defined analogously to the case when $\Y$ is finite: it is the process on $\Y$ which is almost surely the infinite Viterbi path of $X$. In \cite{Ritov}
P. Chigansky and Y. Ritov study the existence of such process in case of HMM with continuous hidden state space. The authors provide examples where the infinite Viterbi path does indeed exist,
and moreover prove its existence under certain strong log-concavity conditions for transition and observation densities. They also provide an example where $\mathcal{Y}$ is countable and Markov chain
$Y$ is positive recurrent, but the infinite Viterbi path does not exist because it diverges coordinate-wise to infinity. This is similar to our Example \ref{noInf} in the sense
that both examples demonstrate a situation where the Viterbi process does not exist.

\section{Barrier set construction theorem}\label{sec2}
Recall the definition of $p_{ij}(\cdot)$ in (\ref{pij}). We already saw that the inequalities (\ref{tr-barrier}) ensure that $(x_{t-1},x_t,x_{t+1})$ is a barrier of order 1. We generalize this idea with:
\begin{prop} \label{barPr} Suppose $b_{1:M} \in \X^M$, $M \geq 3$, is such that for some  $l \in \{2, \ldots ,M-1 \}$
\begin{align}
&p_{i1}(b_{1:l}) p_{1j}(b_{l:M}) \geq p_{ik}(b_{1:l})p_{kj}(b_{l:M}), \quad \forall i,j,k \in \Y \label{bareq1}.
\end{align}
Then $b_{1:M}$ is a 1-barrier of order $M-l$. If inequalities (\ref{bareq1}) are strict for any $i$, $j$ and any $k \neq 1$ for which the left side of the inequality is non-zero, then $b_{1:M}$ is a
strong 1-barrier of order $M-l$.
\end{prop}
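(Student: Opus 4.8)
The plan is to turn the barrier statement into a statement about a single node and then verify the node inequality (\ref{r-node}) by splitting the Viterbi scores $\delta_t$ across the junction index $l$. First I would fix an occurrence of the word: suppose $x_{1:\infty}$ has $x_{m-M+1:m}=b_{1:M}$ for some $m\geq M$, and set $s=m-M+1$ (the start of the block) and $t=s+l-1=m-(M-l)$ (the index carrying $b_l$). Since the node promised by a $1$-barrier of order $r=M-l$ sits at time $m-r=t$, and since $x_{t:m}=b_{l:M}$, the definition of a $1$-node reduces the whole claim to proving
\[
\delta_t(1)\,p_{1j}(b_{l:M})\geq \delta_t(k)\,p_{kj}(b_{l:M}),\qquad \forall\, j,k\in\Y,
\]
together with the strict version in the appropriate regime for the strong case.

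The key step is a block form of the recursion (\ref{recursion}). Because $x_{s:t}=b_{1:l}$ and $p(x_{1:t},y_{1:t})=p(x_{1:s},y_{1:s})\prod_{k=s+1}^{t}q(x_k,y_k|x_{k-1},y_{k-1})$, freezing the junction value $y_s=i'$ lets the two factors be maximised independently, and reading off the definition (\ref{pij}) on the block gives the identity
\[
\delta_t(i)=\max_{i'\in\Y}\delta_s(i')\,p_{i'i}(b_{1:l}),\qquad i\in\Y.
\]
This is the main technical ingredient and the step I would be most careful with: the factorisation of the maximum hinges on the shared index $y_s=i'$ appearing in exactly one factor of the product, and on the reindexing that identifies $\max_{y_{s+1:t-1}}\prod_{k=s+1}^{t}q$ with $p_{i'i}(b_{1:l})$.

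With this identity the inequality is a short chain. Fixing $j,k$, choose $i^*$ with $\delta_t(k)=\delta_s(i^*)p_{i^*k}(b_{1:l})$; then (\ref{bareq1}) applied with $i=i^*$ yields
\[
\delta_t(k)\,p_{kj}(b_{l:M})=\delta_s(i^*)p_{i^*k}(b_{1:l})p_{kj}(b_{l:M})\leq \delta_s(i^*)p_{i^*1}(b_{1:l})p_{1j}(b_{l:M})\leq \delta_t(1)\,p_{1j}(b_{l:M}),
\]
the last step because $\delta_s(i^*)p_{i^*1}(b_{1:l})\leq\max_{i'}\delta_s(i')p_{i'1}(b_{1:l})=\delta_t(1)$. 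This makes $t$ a $1$-node of order $M-l$, so $b_{1:M}$ is a $1$-barrier of order $M-l$.

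For the strong case the only extra work is positivity bookkeeping. Suppose $k\neq 1$ and the left side $\delta_t(1)p_{1j}(b_{l:M})>0$. If $\delta_t(k)p_{kj}(b_{l:M})=0$ the inequality is strict trivially; otherwise $p_{i^*k}(b_{1:l})p_{kj}(b_{l:M})>0$, and since (\ref{bareq1}) forces its right side to vanish whenever its left side does, positivity of $p_{i^*k}(b_{1:l})p_{kj}(b_{l:M})$ forces positivity of $p_{i^*1}(b_{1:l})p_{1j}(b_{l:M})$; hence the strict form of (\ref{bareq1}) applies with $i=i^*$, and multiplying through by $\delta_s(i^*)>0$ turns the first inequality of the chain strict. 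Thus $t$ is a strong $1$-node of order $M-l$. The main obstacle throughout is establishing the block-splitting identity for $\delta_t$ cleanly; once it holds, both conclusions are essentially formal.
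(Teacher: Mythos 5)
Your proof is correct and follows essentially the same route as the paper's: both rest on the block identity $\delta_{t}(i)=\max_{i'}\delta_{s}(i')p_{i'i}(b_{1:l})$ at the junction index $l$ and then apply \eqref{bareq1} to compare the two maxima (the paper compares termwise over $i$, you evaluate at the maximizer of the right-hand side — logically the same step). Your positivity bookkeeping for the strong case matches the paper's argument as well, just spelled out in more detail.
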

\begin{proof} Let $x_{1:\infty}$ be such a realization of $X$ that $x_{t+1:t+M}=b_{1:M}$ for some $t$. Then for all $j, k \in \Y$
\begin{align}
\delta_{t+l}(1)p_{1j}(x_{t+l:t+M})&=\max_{i \in \Y} \delta_{t+1}(i)p_{i1}(x_{t+1:t+l})p_{1j}(x_{t+l:t+M}) \notag \\
& \geq \max_{i \in \Y} \delta_{t+1}(i)p_{ik}(x_{t+1:t+l})p_{kj}(x_{t+l:t+M}) \label{bareq2}\\
&=\delta_{t+l}(k)p_{kj}(x_{t+l:t+M}), \notag
\end{align}
which shows that $x_{t+1:t+M}=b_{1:M}$ is indeed a 1-node of order $M-l$. Let now inequalities (\ref{bareq1}) be strict for any $i$, $j$ and any $k \neq 1$ for which the left side of the inequality is non-zero.
Then we have for every $j \in \Y$ for which $\delta_{t+l}(1)p_{1j}(x_{t+l:t+M})>0$ and for every $k \neq 1 $ that the inequality (\ref{bareq2}) is strict, which makes $b_{1:M}$ a strong 1-barrier of order $M-l$.
\end{proof}
Proposition \ref{barPr} allows us to derive conditions \textbf{A1}-\textbf{A3} detailed below, which ensure the existence of a barrier set. For any $n \geq 2$, define
\begin{align} \label{Ypairs}
\mathcal{Y}^+(x)= \{ (i,j) \: | \: p_{ij}(x)>0\}, \quad x \in \X^n.
\end{align}
For any set $A$ consisting of vectors of length $n$ we adopt the following notation:
\begin{align*}
&A_{(k)}=\{x_{k} \:| \: x_{1:n} \in A\}, \quad 1 \leq k \leq n,\\
&A_{(k,l)}=\{x_{k:l} \:| \: x_{1:n} \in A\}, \quad 1 \leq k \leq l \leq n.
\end{align*}
Hence
$$\mathcal{Y}^+(x)_{(1)}=\{i \: | \: \exists j(i) \text{   such that }
p_{ij}(x)>0\},\quad \mathcal{Y}^+(x)_{(2)}=\{j \: | \: \exists i(j)
\text{
  such that } p_{ij}(x)>0\}.$$ Observe that if $i\in
\mathcal{Y}^+(x)_{(1)}$ and $j\in \mathcal{Y}^+(x)_{(2)}$, then not
necessarily $(i,j)\in \mathcal{Y}^+(x).$
The aforementioned conditions are the following.
\begin{description}
\item[A1]  There exists $N \geq 2$, $ n_1 < \cdots < n_{2N+2}$, set $\mathcal{X}^* \subset\mathcal{X}^{n_{2N+2}}$ and $\epsilon>0$ such for all $k=1,\ldots,2N$ and all $x \in \mathcal{X}^*_{(n_{k},n_{k+1})}$
\begin{align}
&p_{11}(x) \geq p_{i1}(x),\quad \forall i \in \mathcal{Y}, \label{ineqA11}\\
&p_{11}(x) \geq p_{1i}(x) ,\quad \forall i \in \mathcal{Y},\label{ineqA12}\\
& p_{11}(x)(1-\epsilon) > p_{ij}(x), \quad \forall i,j \in \mathcal{Y} \setminus \{1\}. \notag
\end{align}

\item[A2] There exist constants $0<\delta \leq \Delta < \infty$ such that
\begin{align*}
&p_{ij}(x) \leq \Delta, \quad \forall i,j \in \mathcal{Y}, \quad \forall  x \in \mathcal{X}^*_{(1,n_1)} \cup \mathcal{X}^*_{(n_{2N+1},n_{2N+2})},\\
&\mathcal{Y}^+(x)\neq \emptyset, \quad p_{i1}(x)  \geq \delta, \quad \forall i \in \mathcal{Y}^+(x)_{(1)}, \quad \forall  x \in \mathcal{X}^*_{(1,n_1)},\\
&\mathcal{Y}^+(x)\neq \emptyset, \quad p_{1j}(x) \geq \delta , \quad \forall j \in \mathcal{Y}^+(x)_{(2)}, \quad \forall x \in \mathcal{X}^*_{(n_{2N+1},n_{2N+2})}.
\end{align*}
\item[A3]   It holds
\begin{align*}
 \dfrac{\Delta}{\delta}(1-\epsilon)^{N}<1.
\end{align*}
\end{description}

We also consider a strengthened version of \textbf{A1}:

\begin{description}
\item[A1'] The condition \textbf{A1} holds with either inequalities (\ref{ineqA11}) or inequalities (\ref{ineqA12}) being strict for all $i \neq 1$.
\end{description}

\begin{theorem} \label{th1}
Suppose \textbf{A1}-\textbf{A3} are fulfilled. Then $\mathcal{X}^*$
consists of 1-barriers of order $n_{2N+2}-n_{N+1}$. Furthermore, if
\textbf{A1'} holds instead of \textbf{A1}, then the 1-barriers are
strong.
\end{theorem}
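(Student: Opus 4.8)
The plan is to apply Proposition \ref{barPr} with $M=n_{2N+2}$ and splitting index $l=n_{N+1}$, so that the asserted order $M-l=n_{2N+2}-n_{N+1}$ comes out automatically. Fix $b_{1:M}\in\mathcal{X}^*$ and abbreviate $L=b_{1:n_{N+1}}$ and $R=b_{n_{N+1}:n_{2N+2}}$; then it suffices to verify inequality (\ref{bareq1}), namely $p_{i1}(L)p_{1j}(R)\ge p_{ik}(L)p_{kj}(R)$ for all $i,j,k\in\Y$, with strictness (for $k\ne1$ and positive left-hand side) under \textbf{A1'}. I would decompose the time range into blocks $C_1=b_{1:n_1}$, $C_m=b_{n_{m-1}:n_m}$ for $m=2,\ldots,2N+1$, and $C_{2N+2}=b_{n_{2N+1}:n_{2N+2}}$, observing that $C_2,\ldots,C_{2N+1}$ are exactly the $2N$ blocks governed by \textbf{A1} while $C_1,C_{2N+2}$ are the boundary blocks governed by \textbf{A2}. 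Using the max-product identity $p_{ab}(x_{1:n})=\max_c p_{ac}(x_{1:s})p_{cb}(x_{s:n})$, every admissible path is encoded by its boundary states $s_0=i,s_1,\ldots,s_{N+1},\ldots,s_{2N+2}=j$, and its weight factorizes over the $C_m$.

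From \textbf{A1} I record, for each $m\in\{2,\ldots,2N+1\}$, that $p_{ab}(C_m)\le p_{11}(C_m)$ whenever $a=1$ or $b=1$, and $p_{ab}(C_m)<(1-\epsilon)p_{11}(C_m)$ when $a,b\ne1$; moreover $p_{11}(C_m)>0$, since otherwise the strict third inequality of \textbf{A1} would force some $p_{ab}(C_m)<0$. From \textbf{A2} I record $p_{ab}(C_1),p_{ab}(C_{2N+2})\le\Delta$ together with $p_{a1}(C_1)\ge\delta$ and $p_{1b}(C_{2N+2})\ge\delta$ on the relevant index sets, the degenerate situations (states outside $\mathcal{Y}^+$) making the corresponding $p$-values vanish and (\ref{bareq1}) trivial. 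For $k=1$ the inequality (\ref{bareq1}) is an identity, so I fix $k\ne1$ and let $P$ be an optimal path realizing $p_{ik}(L)p_{kj}(R)$, with middle state $s_{N+1}=k$.

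The core is a surgery/counting dichotomy according to whether $P$ revisits state $1$ on each side of the middle. If $P$ equals $1$ at some interior left boundary $a\in\{1,\ldots,N\}$ and some interior right boundary $b\in\{N+2,\ldots,2N+1\}$, I replace the intervening states by $1$; since the affected blocks lie in $\{C_{a+1},\ldots,C_b\}$ and each doubly-non-$1$ block only improves by a factor $(1-\epsilon)^{-1}\ge1$, the new path through $1$ at the middle has weight at least that of $P$, giving $p_{i1}(L)p_{1j}(R)\ge p_{ik}(L)p_{kj}(R)$. If instead $P$ avoids $1$ on the whole left interior ($s_1,\ldots,s_N\ne1$) but returns to $1$ at a first boundary $b\ge N+2$, I perform a one-sided surgery replacing $s_1,\ldots,s_{b-1}$ by $1$ while keeping $s_0=i$: the blocks $C_2,\ldots,C_{b-1}$ were all doubly-non-$1$, so the old weight on $C_1,\ldots,C_b$ is at most $\Delta(1-\epsilon)^{b-2}\prod_{m=2}^b p_{11}(C_m)$, whereas the new weight is at least $\delta\prod_{m=2}^b p_{11}(C_m)$; since $b-2\ge N$, \textbf{A3} yields that the new weight strictly exceeds the old, hence $p_{i1}(L)p_{1j}(R)>p_{ik}(L)p_{kj}(R)$. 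The mirror situation (returning to $1$ on the left but not the right) is symmetric, and if $P$ avoids $1$ on both interiors then all $2N$ blocks $C_2,\ldots,C_{2N+1}$ are doubly-non-$1$ and a direct comparison with the all-$1$ reference bounds the ratio by $\bigl(\tfrac{\Delta}{\delta}(1-\epsilon)^{N}\bigr)^2<1$. Thus for every $k\ne1$ one obtains (\ref{bareq1}), strictly in all cases invoking \textbf{A3}.

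To obtain strong barriers under \textbf{A1'}, note that the only case above producing a non-strict inequality is the middle surgery with a single non-$1$ state ($b=a+2$), where the two replaced transitions are $p_{1k}(C_{N+1})$ and $p_{k1}(C_{N+2})$; the strict form of (\ref{ineqA11}) or of (\ref{ineqA12}) guaranteed by \textbf{A1'} makes one of these strictly below $p_{11}$, upgrading the inequality to strict whenever its left-hand side is positive, which is exactly what Proposition \ref{barPr} requires for a strong $1$-barrier. I expect the main obstacle to be the bookkeeping in this surgery/counting step: one must verify that whenever $P$ fails to return to $1$ on a side, its deviation necessarily spans at least $N$ of the \textbf{A1}-blocks before it can reach a boundary block, so that the single factor $\tfrac{\Delta}{\delta}$ lost there is dominated by the harvested $(1-\epsilon)^{N}$ precisely as quantified by \textbf{A3}, and that the edge cases $i=1$, $j=1$, or vanishing $p$-values are correctly absorbed into triviality.
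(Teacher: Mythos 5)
Your proposal is correct and rests on the same mechanisms as the paper's proof: reduction to Proposition \ref{barPr} with split point $l=n_{N+1}$, termwise rerouting of a path through state $1$ (using that $p_{ab}$ of an \textbf{A1}-block is at most $p_{11}$ of that block for every pair $(a,b)$), and the counting step in which a run of at least $N$ doubly-non-$1$ blocks harvests $(1-\epsilon)^{N}$ against a single boundary loss of $\Delta/\delta$, so that \textbf{A3} forces strict suboptimality. The organizational difference is that the paper never works with the joint path from $i$ to $j$: it proves the two factored statements $p_{i1}(x_{1:n_{N+1}})\ge p_{ij}(x_{1:n_{N+1}})$ and $p_{1i}(x_{n_{N+1}:n_{2N+2}})\ge p_{ji}(x_{n_{N+1}:n_{2N+2}})$ separately (state $1$ is an optimal endpoint for each half on its own) and obtains \eqref{bareq1} by multiplying; this collapses your four-way dichotomy into a single step per half --- any optimal half-path must visit $1$ at one of its $N$ interior boundaries, after which the tail is replaced by all $1$'s --- and the ``avoids $1$ on both interiors'' case with its squared bound never arises. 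Your direct surgery is sound, but two of the bookkeeping points you flag do need to be made explicit: in the two-sided case you must take $a$ to be the \emph{last} visit to $1$ on the left and $b$ the \emph{first} on the right, so that all replaced states are $\ne 1$ and the only configuration not already strict (via a surviving doubly-non-$1$ block) is indeed $b=a+2$; and in the one-sided and two-sided-avoiding cases the strict gain on the modified blocks must be multiplied by the unchanged weight of the untouched blocks, with the case where that weight vanishes folded into the triviality of \eqref{bareq1}. Your strictness upgrade under \textbf{A1'} via the two transitions into and out of $k$ at the midpoint is exactly the paper's argument for the half-inequalities, transplanted to the joint path.
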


Note how the condition \textbf{A1} concerns only the section $\X^*_{(n_1,n_{2N+1})}$ of $\X^*$ while the condition \textbf{A2} concerns the sections $\X^*_{(1,n_1)}$
and $\X^*_{(n_{2N+1},n_{2N+2})}$. This motivates:

\begin{definition} If $\X^*$ satisfies \textbf{A1} (\textbf{A1}'), then $\X^*_{(n_1,n_{2N+1})}$ is called \emph{a (strong) center part of a barrier set}.
\end{definition}

Since a center part of a barrier set has a cyclic structure (consisting of $2N$ cycles), then it is natural that its construction is also cyclical. Consider for example the case when $\X$ is
 discrete and there exists a sequence $x_{1:n} \in \X^n$, $n \geq 2$, such that
\begin{align*}
x_1=x_n \quad \mbox{and} \quad p_{11}(x_{1:n})>p_{ij}(x_{1:n}), \quad \forall (i,j)\in \Y^2 \setminus \{(1,1)\}.
\end{align*}
Thus, denoting $x=x_{1:n-1}$, for any $N \geq 2$ we can take the strong center part to be
\begin{align*}
\{(\underbrace{x, x, \ldots ,x}_{\text{$2N$ blocks of $x$}},x_n)\}.
\end{align*}
Since we can take $N$ arbitrarily large, we can always ensure that \textbf{A3} holds when $\delta$ and $\Delta$ are fixed.

Let us consider now the case when $\X$ is uncountable. Then the situation is in general more complicated, since center part of a barrier set should typically contain uncountably many vectors for $X$
to return to the corresponding barrier set $\X^*$ infinitely often. For any vector sets $A \subset\X^k$, $k \geq 1$, and $B \subset\X^l$, $l \geq 1$, we write
\begin{align} \label{dotOp}
A \cdot B = \{x_{1:k+l-1} \:| \: x_{1:k} \in A, \: x_{k:k+l-1} \in B\}.
\end{align}
For a fixed $\epsilon>0$ and $n \geq 1$ let
\begin{align*}
W=\{x_{1:n} \in \X^n \: | \: p_{11}(x_{1:n})(1-\epsilon)>p_{ij}(x_{1:n}), \: (i,j) \in \Y^2 \setminus \{(1,1)\} \}.
\end{align*}
We can construct a strong center part of a barrier set by gluing
together $2N$ instances of $W$:
\begin{align} \label{Wset}
\underbrace{W \cdots W}_{2N \text{ instances of $W$}}.
\end{align}
Again, for a fixed $\delta$ and $\Delta$, here $N$ can be taken so
large that \textbf{A3} holds. However, for $X$ to enter the
corresponding barrier set $\X^*$ infinitely often, the set
(\ref{Wset}) must have positive $\mu^{2N\cdot (n-1)+1}$ measure.
This might be difficult to confirm for specific models. In fact,
depending on $\epsilon$, $n$ and $N$, the set (\ref{Wset}) might
well be empty. But in many instances (\ref{Wset}) does have a
positive $\mu^{2N\cdot (n-1)+1}$-measure, regardless of the choice
of $N$. The following example demonstrates this.
\begin{example}\label{exSt} Let the function $(x,x') \mapsto q(x,i|x',j)$ be continuous for all $i,j \in \Y$. Suppose there exists $x_{1:n} \in \X^n$, $n\geq 2$, such that
\begin{align*}
x_1=x_n \quad \mbox{and} \quad  p_{11}(x_{1:n}) > p_{ij}(x_{1:n}), \quad \forall (i,j) \in \Y^2 \setminus \{(1,1)\}.
\end{align*}
Since $(x,x') \mapsto q(x,i|x',j)$ are continuous, then so must be maps $\X^n \ni x \mapsto p_{ij}(x)$, and therefore there must exist open balls $B_1,\ldots,B_{n-1} \subset\X$ and $\epsilon>0$ such that $x_{1:n} \in B_1 \times \cdots \times B_{n-1} \times B_{1}$ and for every $x \in B_1 \times \cdots \times B_{n-1} \times B_{1}$
\begin{align*}
& p_{11}(x)(1-\epsilon) > p_{ij}(x), \quad \forall (i,j) \in \Y^2 \setminus \{(1,1)\}.
\end{align*}
Setting $B=B_1 \times \cdots \times B_{n-1}$, we have for arbitrary $N \geq 2$ that set
\begin{align*}
\underbrace{B \times \cdots \times B}_{2N \text{ blocks of $B$}}\times B_1
\end{align*}
is a strong center part of a barrier set. Assuming that any open ball has positive $\mu$-measure, this barrier set must have positive $\mu^{2N\cdot (n-1)+1}$-measure.
\end{example}

\begin{proof}[Proof of Theorem \ref{th1}]
Fix $x_{1:n_{2N+2}} \in \X^*$. We will show that \textbf{A1}-\textbf{A3} imply inequalities
\begin{align}
&p_{i1}(x_{1:n_{N+1}}) \geq  p_{ij}(x_{1:n_{N+1}}), \quad \forall i,j \in \Y, \label{pi1Ineq} \\
&p_{1i}(x_{n_{N+1}:n_{2N+2}}) \geq  p_{ji}(x_{n_{N+1}:n_{2N+2}}), \quad \forall i,j \in \Y. \label{p1iIneq}
\end{align}
We also show that if \textbf{A1'} holds instead of \textbf{A1}, then
either inequalities (\ref{pi1Ineq}) or (\ref{p1iIneq}) are strict
for all $i$ and $j \neq 1$ for which the left side of the inequality
is non-zero. Then the statement follows from Proposition
\ref{barPr}. Denote
\begin{align*}
a_{ij}(0)=p_{ij}(x_{1:n_1}), \quad a_{ij}(k)=p_{ij}(x_{n_k:n_{k+1}}), \quad k=1,\ldots,N.
\end{align*}
We start by proving (\ref{pi1Ineq}). If $i \notin \Y^+(x_{1:n_1})_{(1)}$, then (\ref{pi1Ineq}) holds, because for every $j \in \Y$
\begin{align*}
p_{ij}(x_{1:n_{N+1}})=\max_{y \in \Y}p_{iy}(x_{1:n_1})p_{yj}(x_{n_1:n_{N+1}})=\max_{y \in \Y}0 \cdot p_{yj}(x_{n_1:n_{N+1}})=0.
\end{align*}
Consider now the case where  $i \in \Y^+(x_{1:n_1})_{(1)}$. Let $M=M(i)$ be the set of all vectors $y_{1:N+1}$ which maximise the expression
\begin{align} \label{maxLik}
a_{iy_1}(0) \prod_{k=1}^{N}a_{y_{k}y_{k+1}}(k).
\end{align}
Hence for every $y_{1:2N+1} \in M$, (\ref{maxLik}) is equal to $\max_{j \in \Y}p_{ij}(x_{1:n_{N+1}})$. First we will prove that for any $y_{1:N+1} \in M$, $y_{1:N}$ contains at least one 1, i.e.
\begin{align} \label{contains1}
M \cap (\Y \setminus \{1\}) \times \Y = \emptyset.
\end{align}
Assuming on contrary, we would have
\begin{align*}
\max_{j \in \Y}p_{ij}(x_{1:n_{N+1}})&= \max_{y_{1},..,y_{N} \in \mathcal{Y}\setminus \{1\}, \: y_{N+1} \in \Y}   a_{iy_1}(0) \prod_{k=1}^{N}a_{y_{k}y_{k+1}}(k) \\
& \stackrel{\mbox{\footnotesize{\textbf{A1}, \textbf{A2} }}}{\leq} \dfrac{a_{i1}(0)}{\delta} \cdot \Delta \cdot (1-\epsilon)^{N}  \cdot \prod_{k=1}^{N}a_{11}(k)  \\
&\stackrel{\mbox{\footnotesize{\textbf{A3}}}}{<}a_{i1}(0) \prod_{k=1}^{N}a_{11}(k) \\
& \leq  \max_{j \in \Y}p_{ij}(x_{1:n_{N+1}})
\end{align*}
- a contradiction. Fix $y_{1:N+1}' \in M$ arbitrarily; as we saw, there must exist $u \in \{1, \ldots ,N\}$ such that $y'_u=1$. Since
\begin{align*}
\max_{j \in \Y}p_{ij}(x_{1:n_{N+1}})&=a_{iy_1'}(0) \prod_{k=1}^{N}a_{y'_{k}y'_{k+1}}(k)\\
& \leq a_{iy_1'}(0)a_{y_1'y_2'}(1)\cdots a_{y_{u-1}'1}(u-1) \cdot a_{11}(u) a_{11}(u+1) \cdots a_{11}(N)\\
& \leq p_{i1}(x_{1:n_{N+1}}),
\end{align*}
then (\ref{pi1Ineq}) holds. The proof of inequalities (\ref{p1iIneq}) is symmetrical.

Finally, we need to show that if \textbf{A1'} holds, then either inequalities (\ref{pi1Ineq}) or (\ref{p1iIneq}) are strict for all $i$ and $j \neq 1$ for which the left side of the inequality is non-zero. For this it suffices to prove the following two claims:
\begin{enumerate}[label=(\roman*)]
\item if inequalities (\ref{ineqA12}) are strict for all $i \neq 1$, then inequalities (\ref{pi1Ineq}) are strict for all $i \in \Y^+(x_{1:n_1})_{(1)}$ and $j \neq 1$;
\item if inequalities (\ref{ineqA11}) are strict for all $i \neq 1$, then inequalities (\ref{p1iIneq}) are strict for all $i \in \Y^+(x_{n_{2N+1}:n_{2N+2}})_{(2)}$ and $j \neq 1$.
\end{enumerate}
We only prove the first claim; the proof for the second claim is symmetrical. Let the inequalities (\ref{ineqA12}) be strict for all $i \neq 1$. Let now again $i \in \Y^+(x_{1:n_1})_{(1)}$ and let $j \neq 1$. Note that by \textbf{A1} and \textbf{A2} $p_{i1}(x_{1:n_{N+1}})>0$.  We show now that assumption
\begin{align} \label{contr}
 p_{i1}(x_{1:n_1})=p_{ij}(x_{1:n_1})
\end{align}
leads to contradiction. Indeed, assuming (\ref{contr}), we have that there exists sequence $y'_{1:N+1}$ which belongs  to set $M(i)$ and for which $y'_{N+1}=j\neq 1$. Therefore by (\ref{contains1}) there must exist $u \in \{1, \ldots ,N\}$ such that $y'_u=1$. Then
\begin{align*}
p_{ij}(x_{1:n_{N+1}})&=a_{iy_1'}(0) \prod_{k=1}^{N}a_{y'_{k}y'_{k+1}}(k)\\
& < a_{iy_1'}(0)a_{y_1'y_2'}(1)\cdots a_{y_{u-1}'1}(u-1) \cdot a_{11}(u) a_{11}(u+1) \cdots a_{11}(N)\\
& \leq p_{i1}(x_{1:n_{N+1}}).
\end{align*}
\end{proof}

Theorem \ref{th1} gives conditions for constructing the barrier set $\X^*$, but we also need to ensure that $X$ enters into $\X^*$ infinitely often a.s. For this we will use the following
\begin{prop}\label{prop1} Let $\X^* \subset\X^M$ for some $M \geq 1$. If for some $A \subset \Z$ and $\epsilon>0$ it holds
\begin{align*}
&P(Z_k \in A \mbox{ i.o.})=1,\\
&P(X_{1:M} \in \mathcal{X}^*|Z_1=z)\geq \epsilon, \quad \forall z \in A,
\end{align*}
then $P(X \in \X^* \mbox{ i.o.})=1$.
\end{prop}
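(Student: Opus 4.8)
The plan is to deduce the statement from the conditional (L\'evy) form of the second Borel--Cantelli lemma, applied along a judiciously chosen sequence of return times of $Z$ to $A$. The hypothesis $P(Z_k \in A \text{ i.o.})=1$ supplies infinitely many ``attempts'' to enter $\X^*$, and the one-step bound $P(X_{1:M}\in\X^*\mid Z_1=z)\geq\epsilon$ for $z\in A$ controls the success probability of each attempt; the only real work is to arrange the attempts so that their conditional success probabilities, given the past, are uniformly bounded below.

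First I would fix the natural filtration $\mathcal{F}_n=\sigma(Z_{1:n})$ and define return times to $A$ that are spaced at least $M$ apart: set $\tau_1=\inf\{n\geq 1 : Z_n\in A\}$ and $\tau_{j+1}=\inf\{n\geq \tau_j+M : Z_n\in A\}$. These are $(\mathcal{F}_n)$-stopping times, and since $Z$ visits $A$ infinitely often on an event of full probability, every $\tau_j$ is finite a.s. Put $E_j=\{X_{\tau_j:\tau_j+M-1}\in\X^*\}$. Each occurrence of $E_j$ is an occurrence of the barrier set at the position $\tau_j$, and because $\tau_j\to\infty$, it suffices to prove that infinitely many $E_j$ occur a.s. in order to conclude $P(X\in\X^*\text{ i.o.})=1$.

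Next I would compute the relevant conditional probabilities. By homogeneity and the strong Markov property, $P(E_j\mid\mathcal{F}_{\tau_j})=g(Z_{\tau_j})$, where $g$ is the function $z\mapsto P(X_{1:M}\in\X^*\mid Z_1=z)$; since $Z_{\tau_j}\in A$, the hypothesis gives $g(Z_{\tau_j})\geq\epsilon$, so $P(E_j\mid\mathcal{F}_{\tau_j})\geq\epsilon$. To invoke conditional Borel--Cantelli I would work with the filtration $\mathcal{H}_j=\mathcal{F}_{\tau_j+M-1}$, with respect to which $E_j\in\mathcal{H}_j$. The spacing $\tau_j\geq\tau_{j-1}+M$ forces $\tau_{j-1}+M-1<\tau_j$ pathwise, hence $\mathcal{H}_{j-1}=\mathcal{F}_{\tau_{j-1}+M-1}\subseteq\mathcal{F}_{\tau_j}$; taking the $\mathcal{H}_{j-1}$-conditional expectation of the bound $P(E_j\mid\mathcal{F}_{\tau_j})\geq\epsilon$ and using the tower property yields $P(E_j\mid\mathcal{H}_{j-1})\geq\epsilon$ for every $j$.

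Finally, $\sum_j P(E_j\mid\mathcal{H}_{j-1})=\infty$ a.s., so the L\'evy conditional Borel--Cantelli lemma applied to $(\mathcal{H}_j)$ and the events $E_j\in\mathcal{H}_j$ gives that $E_j$ occurs infinitely often a.s., whence $P(X\in\X^*\text{ i.o.})=1$. I expect the main obstacle to be precisely the dependence among the events $E_j$: the barrier occurrences at successive visits to $A$ are not independent, so the elementary Borel--Cantelli lemma does not apply; the $M$-spacing of the return times is what makes the blocks $[\tau_j,\tau_j+M-1]$ disjoint and the $\sigma$-algebras $\mathcal{H}_{j-1}\subseteq\mathcal{F}_{\tau_j}$ nest correctly, which is exactly what legitimizes the uniform lower bound $\epsilon$ on the one-step conditional probabilities.
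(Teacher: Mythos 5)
Your argument is correct, and it is genuinely different from the one in the paper. You extract a subsequence of $M$-separated return times $\tau_j$ to $A$, use the strong Markov property to get $P(E_j\mid\mathcal{F}_{\tau_j})=g(Z_{\tau_j})\geq\epsilon$, push this down to $P(E_j\mid\mathcal{H}_{j-1})\geq\epsilon$ via the nesting $\mathcal{H}_{j-1}=\mathcal{F}_{\tau_{j-1}+M-1}\subseteq\mathcal{F}_{\tau_j}$, and close with L\'evy's conditional Borel--Cantelli lemma; all of these steps check out (the $\tau_j$ are a.s.\ finite stopping times, $E_j\in\mathcal{H}_j$, and $\tau_j\to\infty$ so ``$E_j$ i.o.''\ does imply $X\in\X^*$ i.o.). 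The paper instead lifts $\X^*$ to the set $B=\{(z_1,\dots,z_M): x_{1:M}\in\X^*\}\subset\Z^M$ and invokes its Lemma A.1, whose proof (modelled on Theorem 9.1.3 of Meyn--Tweedie) works with \emph{all} time indices at once: it sets $E_n=\{Z_{n:n+M-1}\in B\}$, uses martingale convergence to show $P(\cup_{i\geq n}E_i\mid\mathcal{F}_n)\to\mathbb{I}(E_n \mbox{ i.o.})$, identifies this conditional probability as $L(Z_n)\geq\epsilon$ whenever $Z_n\in A$, and concludes that the limiting indicator, being $\geq\epsilon$ along a subsequence, must equal $1$. The two routes use comparable machinery (both ultimately rest on martingale arguments), but yours needs the strong Markov property and careful bookkeeping with stopping-time $\sigma$-fields, where the paper's only needs the ordinary Markov property at deterministic times; on the other hand your version isolates the probabilistic mechanism (independent-enough attempts each succeeding with probability $\geq\epsilon$) more transparently, and the $M$-spacing of the blocks is exactly the right device to make the conditional Borel--Cantelli hypothesis verifiable.
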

\begin{proof} Take $B=\{((x_1,y_1), \ldots, (x_M,y_M)) \: | \: x_{1:M} \in \X^*, y_{1:M} \in \Y^M\}$. Thus
\begin{align*}
&P(Z_{1:M} \in B|Z_1=z)=P(X_{1:M} \in \mathcal{X}^*|Z_1=z)\geq \epsilon, \quad \forall z \in A.
\end{align*}
From Lemma \ref{lem1} it follows that $P(Z \in B \mbox{ i.o.})=1$ which implies $P(X \in \X^* \mbox{ i.o.})=1$. \end{proof}
\paragraph{Harris chains and reachable points.} We will now introduce some general state space Markov chain
terminology. Markov chain $Z$ is called
\textit{$\varphi$-irreducible} for some $\sigma$-finite measure
$\varphi$ on $\B(\Z)$, if $\varphi(A)>0$ implies $\sum_{k=2}^\infty
P(Z_k \in A|Z_1=z)>0$ for all $z \in \Z$. If $Z$ is
$\varphi$-irreducible, then there exists (see \cite[Prop.
4.2.2.]{MT}) a \textit{maximal irreducibility measure} $\psi$ in the
sense that for any other irreducibility measure $\varphi'$ the
measure $\psi$ dominates $\varphi'$, $\psi \succ \varphi'$. The
symbol $\psi$ will be reserved to denote the maximal irreducibility
measure of $Z$.  A point $z\in \Z$ is called \textit{reachable} if for
every open neighbourhood $O$ of $z$,
\begin{align*}
\sum_{k =2}^\infty P(Z_k \in O|Z_1=z')>0, \quad \forall z' \in \Z.
\end{align*}
For $\psi$-irreducible $Z$, the point $z$ is reachable if and only
if it belongs to the support of $\psi$ \cite[Lemma 6.1.4]{MT}. Since we have equipped space $\Z$ with product topology $\tau \times 2^\Y$, where $\tau$ denotes the topology induced by the metrics of
$\X$, the above-stated definition of reachable point is actually
equivalent to the following: point $(x,i) \in \Z$ is called
\emph{reachable}, if for every open neighbourhood $O$ of $x$,
\begin{align*}
\sum_{k =2}^\infty P(Z_k \in O \times \{i\}|Z_1=z)>0, \quad \forall
z \in \Z.
\end{align*}
Chain $Z$ is called \textit{Harris recurrent}, if it is
$\psi$-irreducible and $\psi(A)>0$ implies $P(Z_k \in A \mbox{
i.o.}|Z_1=z)=1$ for all $z \in \Z$.

The following lemma links  the conditions of
Proposition \ref{prop1} to the conditions \textbf{A1}-\textbf{A2}
and Harris recurrence of $Z$.
\begin{lemma}\label{lemio} Let $\X^* \subset\X^M$ satisfy \textbf{A1} and \textbf{A2} and let $Z$ be Harris recurrent.
Moreover, assume that there exists $i \in \Y$ such that $i \in \Y^+(x)_{(1)}$ for every $x \in \X^*_{(1,n_1)}$. 
Denote
\begin{align*}
\X^*(x_1)=\{x_{2:M} \:| \: x_{1:M} \in \X^*\}, \quad x_1 \in \X^*_{(1)}.
\end{align*}
 If 
\begin{align} \label{muxPos}
 \mu^{M-1}(\X^*(x_1))>0, \quad \forall x_1 \in \X^*_{(1)},
\end{align} 
and $\psi(\X^*_{(1)} \times \{i\})>0$, then $P(X \in \X^* \mbox{ i.o.})=1$.
\end{lemma}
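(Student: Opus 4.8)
The plan is to apply Proposition \ref{prop1} with the set $A$ taken to be a suitable subset of $\X^*_{(1)} \times \{i\}$. For $x_1 \in \X^*_{(1)}$ set $\phi(x_1) := P(X_{1:M} \in \X^* \mid Z_1 = (x_1,i))$; since conditioning on $Z_1 = (x_1,i)$ fixes $X_1 = x_1$, we have
$$\phi(x_1) = \int_{\X^*(x_1)} p(x_{2:M}\mid x_1, y_1 = i)\, d\mu^{M-1}(x_{2:M}), \qquad p(x_{2:M}\mid x_1,y_1=i)=\sum_{y_{2:M}}\prod_{k=2}^M q(x_k,y_k\mid x_{k-1},y_{k-1}).$$
To invoke Proposition \ref{prop1} I need a measurable $A \subseteq \X^*_{(1)}\times\{i\}$ with $\psi(A) > 0$ (so that Harris recurrence yields $P(Z_k \in A \text{ i.o.}) = 1$) and a fixed $\epsilon > 0$ with $\phi \geq \epsilon$ on $A$.

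First I would establish the pointwise positivity $\phi(x_1) > 0$ for every $x_1 \in \X^*_{(1)}$. The idea is to lower-bound the integrand by restricting the hidden path to pass through state $1$ at each of the times $n_1, n_2, \ldots, n_{2N+1}$. Because these intermediate states are pinned to $1$, the sum over the remaining coordinates factorizes across the blocks and each block-sum dominates the corresponding block-maximum, so that for every $x_{2:M} \in \X^*(x_1)$ (hence $x_{1:M} \in \X^*$),
$$p(x_{2:M}\mid x_1,y_1=i) \;\geq\; p_{i1}(x_{1:n_1})\Big(\prod_{k=1}^{2N} p_{11}(x_{n_k:n_{k+1}})\Big)\max_{j\in\Y}p_{1j}(x_{n_{2N+1}:M}).$$
The outer two factors are each $\geq \delta$ by \textbf{A2}, using the hypothesis $i \in \Y^+(x_{1:n_1})_{(1)}$ for the first block and $\Y^+(\cdot)_{(2)} \neq \emptyset$ for the last. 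For the central blocks the crucial observation is that the strict inequality $p_{11}(x)(1-\epsilon) > p_{ij}(x)$ in \textbf{A1}, read for any $i,j \neq 1$ (which exist once $|\Y|\geq 2$), forces $p_{11}(x) > 0$ on all of $\X^*_{(n_k,n_{k+1})}$. Hence the integrand is strictly positive throughout $\X^*(x_1)$, and since $\mu^{M-1}(\X^*(x_1)) > 0$ by (\ref{muxPos}), we conclude $\phi(x_1) > 0$.

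To upgrade this to a uniform lower bound on a set of positive $\psi$-measure I would pass to level sets. Put $A_n := \{(x_1,i) : x_1 \in \X^*_{(1)},\ \phi(x_1) \geq 1/n\}$; these are measurable, increasing in $n$, and by the pointwise positivity just established $\bigcup_n A_n = \X^*_{(1)} \times \{i\}$. Continuity of $\psi$ from below together with the hypothesis $\psi(\X^*_{(1)}\times\{i\}) > 0$ then yields some $n_0$ with $\psi(A_{n_0}) > 0$. Harris recurrence gives $P(Z_k \in A_{n_0}\text{ i.o.})=1$, while $P(X_{1:M}\in\X^*\mid Z_1 = z) = \phi(x_1) \geq 1/n_0$ for every $z = (x_1,i) \in A_{n_0}$. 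Proposition \ref{prop1}, applied with $A = A_{n_0}$ and $\epsilon = 1/n_0$, then gives $P(X \in \X^*\text{ i.o.}) = 1$.

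The main obstacle is the pointwise positivity step: one must split the observation block into the initial piece $[1,n_1]$, the $2N$ central pieces, and the terminal piece $[n_{2N+1},M]$, bound each factor using the appropriate clause of \textbf{A1}/\textbf{A2}, and in particular notice that the otherwise innocuous strict inequality of \textbf{A1} is exactly what guarantees $p_{11} > 0$ (a positive-probability crossing) on the central blocks. The subsequent level-set argument is routine measure theory, the only mild subtlety being measurability of $\X^*_{(1)}$ and of $x_1 \mapsto \phi(x_1)$, which follow from measurability of $\X^*$ by Tonelli.
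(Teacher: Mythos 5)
Your proposal is correct and follows essentially the same route as the paper: lower-bound $P(X_{1:M}\in\X^*\mid Z_1=(x_1,i))$ by pinning the hidden path to state $1$ at times $n_1,\dots,n_{2N+1}$, obtaining the positive factor $p_{i1}(x_{1:n_1})\prod_k p_{11}(x_{n_k:n_{k+1}})\max_j p_{1j}(x_{n_{2N+1}:M})$ via \textbf{A1}--\textbf{A2}, then extract a positive-$\psi$-measure set on which this probability is uniformly bounded below and conclude with Harris recurrence and Proposition \ref{prop1}. The only difference is that you spell out the level-set argument and the reason $p_{11}>0$ on the central blocks, both of which the paper leaves implicit.
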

\begin{proof}
By \textbf{A1}, \textbf{A2} and \eqref{muxPos} we have for every $x_1 \in \X^*_{(1)}$
\begin{align*}
&P(X_{1:n_{2N+2}} \in \X^*|Z_1=(x_1,i))\\
 & = \int_{\X^*(x_1)} \sum_{y_{1:M} \colon y_1=i}p(x_{2:M},y_{2:M}|x_1,y_1) \, \mu^{M-1}(dx_{2:M})\\
 &\geq \int_{\X^*(x_1)} p_{i1}(x_{1:n_1}) \left( \prod_{k=2}^{2N+1} p_{11}(x_{n_{k-1}:n_k})\right) \max_{j \in \Y}p_{1j}(x_{n_{2N+1}:n_{2N+2}}) \, \mu^{M-1}(dx_{2:M})\\
 &> 0.
\end{align*}
Thus there must exist $A \subset\X^*_{(1)} \times \{i\}$ and $\epsilon'>0$ such that $\psi(A)>0$ and $P(X \in \X^*|Z_1=z) \geq \epsilon'$ for all $z \in A$. Since $Z$ is Harris recurrent,
then $P(Z_k \in A \mbox{ i.o.})=1$ and the statement follows from Proposition \ref{prop1}.
\end{proof}
\section{Barrier set construction with lower semi-continuous transition densities} \label{sec3}
In Subsection \ref{HMM} we will show how Theorem \ref{th1} can be used to derive simple and general conditions for the existence of
infinite Viterbi path in case of HMM. For non-HMM's the situation may be more complex and proving \textbf{A1'}, \textbf{A2} and \textbf{A3} might be difficult.
In the present section we derive some conditions which are easier to handle by assuming lower semi-continuity and boundedness of functions $(x,x') \mapsto q(x,j|x',i)$.
In what follows, the first assumption {\bf B1} is
closely related to the condition \textbf{A2} and the second
assumption {\bf B2} guarantees the existence of a strong center part
of a barrier set.
\begin{description}
 \item[B1\namedlabel{Bcluster}{\textbf{B1}}] There exists an open set $E \subset\X^q$, $q\geq 2$, such that $\Y^+ \stackrel{\scriptsize{\mbox{def}}}{=}\Y^+(x)$
 is the same for every $x \in E$ and satisfies the following property: $(i,j) \in \Y^+$ for every $i \in \Y^+_{(1)}$ and $j \in \Y^+_{(2)}$. Furthermore, we assume
 that there exists a reachable point $(x_E,i_E)$ in $E_{(1)}\times \Y^+_{(1)}$.
\item[B2\namedlabel{Bcenter}{\textbf{B2}}] For arbitrary $N \geq 2$ there exists a strong center part of a barrier set $\X^*_{(n_1,n_{2N+1})}$ which
 is open, non-empty and has $2N$ cycles. We assume that both set $\X^*_{(n_1)}$ and parameter $\epsilon$ of \textbf{A1} are independent of $N$, and
there exists a compact set $K \subset \X$, which is independent of $N$, such that $\X^*_{(n_{2N+1})}$ is contained in $K$. Furthermore, we assume that
there exists $x^* \in \X^*_{(n_1)}$ such that $(x^*,1)$ is reachable.
\end{description}
\begin{theorem} \label{thLSC} Let $\mu$ be strictly
postive\footnote{A measure is called \textit{strictly positive} if
it assigns a positive measure to all non-empty open sets.} and let
for every pair of states $i,j \in \Y$ function $(x,x') \mapsto
q(x,i|x',j)$ be lower semi-continuous and bounded. If $Z$ satisfies
\ref{Bcluster} and \ref{Bcenter}, then there exists $\X^*$ satisfying
\textbf{A1'}, \textbf{A2} and \textbf{A3}. Moreover, if $Z$ is
Harris recurrent, then $P(X \in \X^* \mbox{ i.o.})=1$.
\end{theorem}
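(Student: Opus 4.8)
The plan is to build $\X^*$ as a concatenation, via the dot operation \eqref{dotOp}, of a \emph{left cap}, the \emph{center part} furnished by \ref{Bcenter}, and a \emph{right cap}:
$$\X^*=\X^*_{(1,n_1)}\cdot\X^*_{(n_1,n_{2N+1})}\cdot\X^*_{(n_{2N+1},n_{2N+2})}.$$
The center part already supplies \textbf{A1'}, the two caps will supply \textbf{A2}, and the number of cycles $N$ will be chosen so large that \textbf{A3} holds. Since \ref{Bcenter} guarantees that $\epsilon$ and $\X^*_{(n_1)}$ do not depend on $N$ and that $\X^*_{(n_{2N+1})}$ lies in a fixed compact $K$, the real issue is to produce caps whose constants $\delta,\Delta$ are independent of $N$. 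Once $\X^*$ is built, Theorem \ref{th1} yields the strong barriers and the final claim follows from Lemma \ref{lemio}.

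For the left cap I would take $\X^*_{(1,n_1)}=E\cdot C$, with $E$ the block of \ref{Bcluster} and $C$ a connector. On $E$ the full-product structure of $\Y^+$ gives $p_{ij}(x)>0$ for all $i\in\Y^+_{(1)}$, $j\in\Y^+_{(2)}$; fixing one $j^\ast\in\Y^+_{(2)}$, reachability of the point $(x^\ast,1)$ from \ref{Bcenter} produces a positive-density path and hence a block $C$ steering $j^\ast$ into state $1$ at the value $x^\ast\in\X^*_{(n_1)}$. Thus every feasible start $i\in\Y^+(x)_{(1)}=\Y^+_{(1)}$ can reach state $1$ through the cap, so $p_{i1}(x)>0$. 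Lower semi-continuity of $q$ makes $p_{i1}$ lower semi-continuous, so after shrinking $E$ and $C$ to compact sets with non-empty interior (still containing $x_E$ and a neighbourhood of $x^\ast$) the infimum of $p_{i1}$ over the cap is attained and strictly positive. The right cap is built symmetrically as $C'\cdot E$, the connector $C'$ steering state $1$ into $(x_E,i_E)$ and $E$ then spreading to every $j\in\Y^+_{(2)}$, so that $p_{1j}(x)>0$. Boundedness of $q$ bounds every $p_{ij}$ over a block of the (fixed) cap length, giving $\Delta<\infty$; as the cap lengths are independent of $N$, so are $\delta$ and $\Delta$. With $\epsilon\in(0,1)$ fixed, choosing $N$ so large that $\tfrac{\Delta}{\delta}(1-\epsilon)^N<1$ secures \textbf{A3}, and Theorem \ref{th1} then makes $\X^*$ a set of strong $1$-barriers.

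For the second assertion I would apply Lemma \ref{lemio} with $i=i_E$. Conditions \textbf{A1}, \textbf{A2} hold and $Z$ is Harris recurrent by hypothesis. Because the left cap begins with $E$, every $x\in\X^*_{(1,n_1)}$ has $\Y^+(x)_{(1)}=\Y^+_{(1)}\ni i_E$, so the extra hypothesis of Lemma \ref{lemio} is met. Since $\X^*$ is assembled from open blocks and $\mu$ is strictly positive, each fibre $\X^*(x_1)$ contains a non-empty open set, whence $\mu^{M-1}(\X^*(x_1))>0$ for every $x_1\in\X^*_{(1)}$. Finally $\X^*_{(1)}$ contains an open neighbourhood of $x_E$, and since $(x_E,i_E)$ is reachable it lies in the support of the maximal irreducibility measure $\psi$; hence $\psi(\X^*_{(1)}\times\{i_E\})>0$. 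Lemma \ref{lemio} now gives $P(X\in\X^*\text{ i.o.})=1$.

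The main obstacle is the right cap. Its connector $C'$ must leave state $1$ from \emph{every} value in $\X^*_{(n_{2N+1})}\subset K$, yet form a single family of blocks of one fixed length with a lower bound $\delta$ that does not deteriorate as $N$ grows. Reachability of $(x_E,i_E)$ gives, for each $x\in K$, a positive-density path from $(x,1)$ into a neighbourhood of $(x_E,i_E)$; lower semi-continuity spreads this positivity to an open neighbourhood of $x$, and compactness of $K$ extracts a finite subcover. The paths in the subcover have different lengths, which I would equalise by inserting short reachability loops at $i_E$ before entering $E$; taking the infimum of $p_{1j}$ over the resulting compact family then yields one constant $\delta>0$ valid for all $x\in K$, and therefore for every $N$.
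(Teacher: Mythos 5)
Your architecture (left cap $E\cdot C$, the center part from \ref{Bcenter}, a right cap $C'\cdot E$, then Theorem \ref{th1} for the strong barriers and Lemma \ref{lemio} for the i.o.\ statement) is exactly the paper's, and the left cap, the choice of $\delta,\Delta,N$, and the verification of Lemma \ref{lemio}'s hypotheses are all carried out essentially as in the paper. The gap is at the point you yourself flag as the main obstacle: equalising the lengths of the right-cap connectors by ``inserting short reachability loops at $i_E$''. Reachability of $(x_E,i_E)$ gives, for each $x$ in your finite subcover of $K$, \emph{some} positive-density path length $s(x)\in\{2,\dots,s_0\}$ from $(x,1)$ into a neighbourhood of $(x_E,i_E)$, and it also gives \emph{some} return length from a neighbourhood of $(x_E,i_E)$ back into such a neighbourhood; but it gives no control over \emph{which} lengths are available. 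If the available loop lengths were, say, all multiples of $3$ while the deficits $L-s(x)$ had different residues modulo $3$, no amount of looping would produce a common length $L$. Nothing in \ref{Bcluster} or \ref{Bcenter} supplies the aperiodicity needed to realise every sufficiently large padding length, so this step does not go through as stated.

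The paper avoids the issue entirely: it keeps the $x$-dependent lengths $l_2(x)=s(x)+q-2$ and never equalises them along paths. Since $l_2(\cdot)$ takes only finitely many values (all bounded by $s_0+q-2$, by the same compactness argument you already have), the sets $\X(l)=\{x \mid l_2(x)=l\}$ form a finite cover of $\X$; by strict positivity of $\mu$, the last ball $B_{n_{2N+1}}$ of the center part therefore meets some $\X(l_2)$ in a set $\X_0$ with $\mu(\X_0)>0$, and the right cap is taken to be $D_2=\cup_{x\in\X_0}\{x\}\times D_2(x)$, a single family of fixed length $l_2$. The price is that $\X_0$ need not be open, so $\mu^{M-1}(\X^*(x_1))>0$ must be checked by integrating $\mu^{l_2}(D_2(x))>0$ over $\X_0$ rather than by exhibiting an open subset of the fibre, and the measurability of $D_2$ has to be verified separately. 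If you replace your loop-padding by this selection of a positive-measure level set of $l_2(\cdot)$, the rest of your argument closes.
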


Before  proving the  theorem, let us briefly discuss
its assumptions. Under
$\psi$-irreducibility (that is implied by Harris recurrence) the
existence of certain reachable points is not restrictive, because
any point in the support of $\psi$ is reachable. Thus  {\bf B1} is
merely to guarantee that $(i,j) \in \Y^+$ for every $i \in
\Y^+_{(1)}$ and $j \in \Y^+_{(2)}$ (we have already noted that this
need not hold in general). It turns out that for stationary $Z$ this
property is closely related to {\it  subpositivity} property of
factor maps in ergodic theory. We shall return to  that connection
and also discuss the necessity of {\bf B1} in Subsection \ref{disc}.
{\bf B2} provides some necessary assumptions for cycle-construction.
Typically the center part of a barrier set is constructed by glueing
together some fixed cycles and {\bf B2} basically guarantees that no
matter how many cycles are connected, the last one always ends in a
fixed compact set $K$. As we shall see in the examples, this
condition holds for many models.
\begin{proof}[Proof of Theorem \ref{thLSC}] \textbf{Lower likelihood bound for vectors in $E$.} We will show that with no loss of generality we may assume that there exist $\delta_0>0$
such that \begin{align} \label{delta0}
p_{ij}(x_{1:q}) > \delta_0, \quad \forall x_{1:q} \in E, \quad \forall (i,j) \in \Y^+.
\end{align}

Let $x_{1:q}' \in E$ be such that $x_1'=x_E$.  Denote $\delta_0=\dfrac{1}{2} \min_{(i,j) \in \Y^+}p_{ij}(x'_{1:q})$. Since $\Y^+(x'_{1:q})=\Y^+$, then $\delta_0>0$. Denote
\begin{align*}
E'=\left\{x_{1:q} \in E \:| \: \min_{(i,j) \in \Y^+}p_{ij}(x_{1:q})>\delta_0 \right\}
\end{align*}
It is not difficult to confirm by induction that lower semi-continuity and boundedness of $(x,x') \mapsto q(x,i|x',j)$ implies lower semi-continuity of
functions $x_{1:n} \mapsto p(x_{2:n},y_{2:n}|x_1,y_1)$ for all $n \geq 2$. Hence for all $i,j \in \Y$ the function $x_{1:q} \mapsto p_{ij}(x_{1:q})$ is lower
semi-continuous since it expresses as a maximum over lower semi-continuous functions. Therefore the function
$x_{1:q} \mapsto \min_{(i,j) \in \Y^+} p_{ij}(x_{1:q})$ must also be lower semi-continuous, and so $E'$ must be open. Also $x_E \in E'_{(1)}$. Therefore $E'$
 could play the role of $E$ and so there is no loss of generality in assuming that (\ref{delta0}) holds true. \\

\noindent \textbf{Construction of set $D_1$.} Recall the element $x_E$ from \ref{Bcluster}. Next we will show that there exist $l_1 > q$, $\delta_1>0$ and an open
set $D_1 \subset\X^{l_1}$ such that $x_E \in D_{1(1)}$, $D_{(l_1)} \subset \X^*_{(n_1)}$ and
\begin{align} \label{D1ineq1}
&\Y^+(x)=\Y^+, \quad \forall x \in D_1,\\
&p_{i1}(x) \geq \delta_1, \quad \forall i \in \Y^+_{(1)}, \quad \forall x \in D_1. \label{D1ineq2}
\end{align}

Fix $x_{1:q}' \in E$ such that $x_1'=x_E$. Also fix $j' \in \Y^+_{(2)}$. By \ref{Bcenter} set $\X^*_{(n_1)}$ is open (projection is an open map) and contains an element
$x^*$ such that $(x^*,1)$ is reachable, so there must $k \geq 1$ such that $P(Z_{k+1} \in \X^*_{(n_1)}\times \{1\}|Z_1=(x_q',j'))>0$. This implies that there exists
$x'_{q+1:q+k} \in \X^{k}$ such that $x_{q+k}' \in \X^*_{(n_1)}$ and $\epsilon_0 \DEF p_{j'1}(x'_{q:q+k})>0$. We define
\begin{align*}
D_1= \left\{x_{1:q+k} \:| \: \min_{i \in \Y^+_{(1)}}p_{i 1}(x_{1:q+k})>\delta_0 \epsilon_0, \: x_{1:q} \in E , \: x_{q+k} \in \X^*_{(n_1)} \right\} ,
\end{align*}
where $\delta_0$ is the constant from \eqref{delta0}. Set $D_1$ is open by the fact that function $x_{1:q+k} \mapsto \min_{i \in \Y^+_{(1)}} p_{i1}(x_{1:r+k})$ is lower
semi-continuous and both $E$ and $\X^*_{(n_1)}$ are open. Note that inequality \eqref{D1ineq2} is satisfied with $\delta_1= \delta_ 0\epsilon_0$. When $x_{1:q+k} \in D_1$, then
\begin{itemize}
\item $\Y^+(x_{1:q+k})_{(1)} \supset \Y^+_{(1)}$ by the fact that $\min_{i \in \Y^+_{(1)}}p_{i 1}(x_{1:q+k})>0$;\\
\item $\Y^+(x_{1:q+k})_{(1)} \subset \Y^+_{(1)}$, because by \ref{Bcluster} $\Y^+(x_{1:q})=\Y^+$.
\end{itemize}
Hence \eqref{D1ineq1} holds. By \ref{Bcluster} $(i,j') \in \Y^+$ for every $i \in \Y^+_{(1)}$. Hence by \eqref{delta0} we have for every $i \in \Y^+_{(1)}$ that
$p_{i 1}(x'_{1:q+k}) \geq p_{i j'}(x'_{1:q})p_{j'1}(x'_{q:q+k})> \delta_0 \epsilon_0$. This implies that $x'_{1:q+k} \in D_1$ and so $x_E=x_1' \in D_{1(1)}$, as required.\\

\noindent \textbf{Construction of sets $D_2(x)$.} Recall now the compact set $K$ from \ref{Bcenter}. We will show that there exists a constant $\delta_2>0$ such that the
following holds: for every $x \in K$ there exists $l_2(x) > q$ and a non-empty open set $D_2(x) \subset\X^{l_2}$ such that
\begin{align} \label{D2ineq1}
&\Y^+(x,x_{1:l_2})_{(2)}=\Y^+_{(2)}, \quad \forall x_{1:l_2} \in D_2(x),\\
&p_{1j}(x,x_{1:l_2}) \geq \delta_2, \quad \forall j \in \Y^+_{(2)}, \quad \forall x_{1:l_2} \in D_2(x). \label{D2ineq2}
\end{align}

Denote for $s \geq 2$
\begin{align*}
p_s(x)=P(Z_s \in E_{(1)} \times \{i_E\}| Z_1=(x,1))
\end{align*}
and $G_s=\{x \in \X \: | \: p_s(x)>0\}$. Functions $x \mapsto p_s(x)$ are lower semi-continuous by Fatou's Lemma and lower semi-continuity of functions $x \mapsto p(x_{2:s},y_{2:s}|x_1=x,y_1=1)$, and so the sets $G_s$ must be open. By \ref{Bcluster} $E_{(1)}$ is open (projection is an open map) and  contains an element $x_E$ such that $(x_E,i_E)$ is reachable for an $i_E \in \Y^+_{(1)}$. This implies that sets $G_s$ form an open cover of compact set $K$. Hence there exists an $s_0 \geq 2$ such that $K \subset \cup_{s =2}^{s_0} G_s$.

Define now
\begin{align*}
&h_s(x)=\sup_{x_{2:s} \in \X^{s-2}\times E_{(1)}}p(x_{2:s},y_s=i_E|z_1=(x,1)),\\
&s(x)=\argmax_{s \in \{2, \ldots, s_0\} } h_s(x)
\end{align*}
and
\begin{align*}
\epsilon(x)=\dfrac{1}{2} \max_{s \in \{2,\ldots,s_0 \} }h_s(x).
\end{align*}
Note that when $x \in K$, then $\epsilon(x)>0$. Indeed, when $x \in K$ then there exists $s \in \{2, \ldots ,s_0\}$ such that $x \in G_{s}$. Hence $p_s(x)>0$, and so $h_s(x)>0$. This implies that $\epsilon(x)>0$.

Denote
\begin{align*}
F(x)=\{x_{2:s(x)} \:| \: p(x_{2:s(x)},y_{s(x)}=i_E|z_1=(x,1))> \epsilon(x)\} \cap \X^{s(x)-2} \times E_{(1)}.
\end{align*}
Functions $x_{2:s} \mapsto p(x_{2:s},y_s=i_E|z_1=(x,1))$ must be lower semi-continuous, since they express as a finite sum of bounded lower semi-continuous functions. Therefore the sets $F(x)$ must be open. Also, when $x \in K$, then, as we saw, $\epsilon(x)>0$, and so $F(x)$ is non-empty. We define
\begin{align*}
&l_2(x)=s(x)+q-2,\\
&D_2(x)=F(x) \cdot E,
\end{align*}
where operator $\cdot$ is defined in (\ref{dotOp}). Set $D_2(x)$ must be open, as it can be expressed as an intersection of two open sets:
\begin{align*}
D_2(x)=\X^{s(x)-2}\times E \cap F(x) \times \X^{q-1}.
\end{align*}
Set $D_2(x)$ is also non-empty for all $x \in K$ by the fact that sets $F(x)$ are non-empty and by definition of sets $F(x)$ and $D_2(x)$.

Next, we prove the existence of $\delta_2>0$. Fix $x \in K$. Set $l=l_2(x)$ and $s=s(x)$ and let $x_{2:l+1} \in D_2(x)$.
By definition of $F(x)$,
\begin{align*}
\sum_{y_{2:s-1} }p(x_{2:s},y_{2:s-1},y_s=i_E|z_1=(x,1))>\epsilon(x),
\end{align*}
and so
\begin{align} \label{lBound}
p_{1i_E}(x,x_{2:s})>|\Y|^{-s}\epsilon(x)\geq |\Y|^{-s_0}\epsilon(x)>0.
\end{align}
Fix $j \in \Y^+_{(2)}$. Thus we have by \ref{Bcluster} that $(i_E,j) \in \Y^+$. Therefore by (\ref{lBound}) and (\ref{delta0})
\begin{align} \label{ineqj}
p_{1j}(x,x_{2:l+1})\geq p_{1i_E}(x,x_{2:s})p_{i_Ej}(x_{s:l+1}) \geq |\Y|^{-s_0}\epsilon(x)\delta_0>0.
\end{align}
Thus $\Y^+(x,x_{2:l+1})_{(2)} \supset \Y^+_{(2)}$; since also $\Y^+(x,x_{2:l+1})_{(2)} \subset \Y^+_{(2)}$, then \eqref{D2ineq1} must hold.
Let now $x$ vary. Since $\epsilon(x)$ is a lower semi-continuous function, then it follows from \eqref{ineqj} and compactness of $K$ that \eqref{D2ineq2} holds with $\delta_2=|\Y|^{-s_0}\min_{x \in K}\epsilon(x) \cdot \delta_0>0$.\\

\noindent \textbf{Construction of $\X^*$.}  By the boundedness assumption there exists $\Delta_0>1$ such that $q(z|z') \leq \Delta_0$ for all $z,z' \in \Z$. Denote $l_{\max}=l_1 \vee (s_0+q-2)$, where $\vee$ denotes maximum. Hence $l_1 \vee \max_{x \in K}l_2(x) \leq l_{\max}$. We take $\Delta= \Delta_0^{l_{\max}}$ and $\delta= \delta_1 \wedge \delta_2$, where $\wedge$ denotes minimum. Take now $N \geq 2$ so large that \textbf{A3} holds -- this is possible because according to \ref{Bcenter} set $\X^*_{(n_1)}$, $\epsilon$ and $K$ are all independent of $N$.

We note that the set $D_1 \cdot \X^*_{(n_1,n_{2N+1})}$ is open and non-empty, since $D_1$ is open and non-empty by construction, $\X^*_{(n_1,n_{2N+1})}$ is open and non-empty by \ref{Bcenter} and $D_{1(l_1)} \subset \X^*_{(n_1)} \neq \emptyset$ by construction of $D_1$. Hence, taking $n_1=l_1$, there exist open balls $( B_k )_{k=1}^{n_{2N+1}}$ in $\X$ such that $x_E \in B_1$ and
\begin{align} \label{inc1}
B_1 \times \cdots \times B_{n_{2N+1}} \subset D_1 \cdot \X^*_{(n_1,n_{2N+1})}.
\end{align}
Denote $\X(l)=\{x \in \X \:|\: l_2(x)=l\}$. The sets $(\X(l))_{l=1}^{l_{\max}}$ form a finite cover of $\X$. Therefore by the assumption that measure $\mu$ is strictly positive there must exist positive integer $l_2 \leq l_{\max}$ such that denoting $\X_0=B_{n_{2N+1}} \cap \X(l_2)$, we have
\begin{align} \label{X0pos}
\mu(\X_0)>0.
\end{align}
Take $D_2= \cup_{x \in \X_0} \{x\} \times D_2(x)$ and
\begin{align*}
\X^*= B_{1} \times \cdots \times B_{n_{2N+1}-1} \times D_2.
\end{align*}
Then \textbf{A1}' is satisfied with $n_1=l_1$ and $n_{2N+2}-n_{2N+1}=l_2$ by the fact that $\X^*_{(n_1,n_{2N+1})}$ is a strong center part of a barrier set (\ref{Bcenter}) and by \eqref{inc1}. \textbf{A2} is satisfied by \eqref{D1ineq1}, \eqref{D1ineq2} \eqref{D2ineq1}, \eqref{D2ineq2} and \eqref{inc1}.

Let now $Z$ be Harris recurrent. To complete the proof it suffices to show that $P(X \in \X^* \mbox{ i.o.})=1$. To prove this, we show that the assumptions of Lemma \ref{lemio} are fulfilled. We start with proving the assumption
\begin{align} \label{muPos}
\mu^{M-1}(\X^*(x_1))>0, \quad \forall x_1 \in \X^*_{(1)},
\end{align}
where we define $M = n_{2N+2}$ and
\begin{align*}
\X^*(x_1) = \{x_{2:M} \:| \: x_{1:M} \in \X^*\}, \quad x_1 \in \X^*_{(1)}.
\end{align*}
First we note that set $D_2$ is measurable. Indeed, setting $s=l_2-q+2$, set $D_2$ expresses as
\begin{align*}
\left( \cup_{x_1 \in \X_0}\{x_1\} \times F(x_1) \right) \cdot E=\left( \{x_{1:s} \:| \: p(x_{2:s},y_s=i_E|x_1,y_1=1)> \epsilon(x_1)\} \cap \X_0 \times \X^{s-1} \right) \cdot E.
\end{align*}
The function $x_{1:s} \mapsto p(x_{2:s},y_s=i_E|x_1,y_1=1)-\epsilon(x_1)$ is
measurable, so $D_2$ must be measurable. Next, note that $\mu^{l_2}(D_2(x))>0$ for all $x \in \X_0 \subset K$ by the fact that sets $D_2(x)$ are by construction open and non-empty. Together with
\eqref{X0pos} the observations above imply that
\begin{align*}
\mu^{l_2+1}(D_2)= \int_{\X_0} \int_{D_2(x_1)} \, \mu^{l_2}(d x_{2:l_2+1}) \, \mu(dx_1)>0
\end{align*}
which in turn implies \eqref{muPos}.

Since $Z$ is Harris recurrent, then it is by definition
$\psi$-irreducible. To prove the rest of the assumptions of Lemma
\ref{lemio}, it suffices to show that
\begin{align} \label{iinc}
i_E \in \Y^+(x)_{(1)}, \quad \forall x \in \X^*_{(1,n_1)}
\end{align}
and
\begin{align} \label{psiPos}
\psi(\X^*_{(1)} \times \{i_E\})>0.
\end{align}
By \eqref{D1ineq1} $\Y^+_{(1)}(x)=\Y^+_{(1)}$ for every $x \in \X^*_{(1,n_1)}=\X^*_{(1,l_1)}$; also by \ref{Bcluster} $i_E \in \Y^+_{(1)}$, so \eqref{iinc} holds. Since point $(x_E,i_E)$ is reachable by \ref{Bcluster}, then this point belongs to the support of measure $\psi$. Since $\X^*_{(1)} \times \{i_E\}$ is an open neighbourhood of $(x_E,i_E)$ (recall that $x_E \in \X^*_{(1)}=B_1$), then \eqref{psiPos} holds by definition of measure support.
\end{proof}

\section{Examples}\label{sec:examples}
\subsection{Hidden Markov model} \label{HMM}
For HMM, Theorem \ref{th1} allows us to deduce a generalized version of Theorem \ref{HMMTh}. Recall the definitions of $G_i$ (\ref{Gi}). We introduce a new term obtained by weakening the cluster condition (\ref{HMM-clustermu}): a subset $C\subset\Y$ is called a \emph{weak cluster}, if
\begin{equation*}
\mu \left[ \left(\cap _{i\in C}G_i \right)\setminus \left(\cup _{i\notin C}G_i \right) \right]>0.
\end{equation*}
The result for HMM is the following:
\begin{cor} \label{HMMcor} Suppose $Z$ is HMM satisfying the following conditions.
\begin{enumerate}[label=(\roman*)]
\item \label{HMMcon1} For each state $j\in \Y$
\begin{align*}
&\mu\left(\left\{x\in\mathcal{X} \: | \: f_j(x)p_{ \Cdot j}> \max_{i\in \Y,~i\ne
j}f_i(x)p_{\Cdot i}\right\}\right)>0,\quad\text{where}~p_{\Cdot j} \stackrel{\mbox{\emph{\scriptsize{def}}}}{=}
\max_{i\in \Y}p_{ij}.
\end{align*}
\item \label{HMMcon2} There exists a \emph{weak}
cluster $C\subset\mathcal{Y}$ such that the sub-stochastic matrix
$\mathbb{P}_C=(p_{ij})_{i,j\in C}$ is primitive in the sense
that $\mathbb{P}^R_C$ consists of only positive elements for some positive
integer $R$.
\end{enumerate}
Also let Markov chain $Y$ be irreducible. Then there exist $i \in \Y$ and a barrier set $\X^*$ consisting of strong $i$-barriers of fixed order and satisfying $P(X \in \X^* \mbox{ i.o.})=1$.
\end{cor}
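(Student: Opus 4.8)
The plan is to apply Theorem \ref{th1} and then to establish the i.o.\ statement directly from the irreducibility of $Y$, bypassing stationarity. Since the conclusion only asks for \emph{some} state $i$, I would first fix a state in the weak cluster $C$ provided by \ref{HMMcon2} and relabel it as $1$; note that \ref{HMMcon1} holds for this state, yielding a positive-$\mu$-measure revealing set $O_1=\{x\in\X \mid f_1(x)p_{\Cdot 1}>\max_{i\neq 1}f_i(x)p_{\Cdot i}\}$, and that the clean set $G^\ast_C=\left(\bigcap_{i\in C}G_i\right)\setminus\left(\bigcup_{i\notin C}G_i\right)$ has positive $\mu$-measure. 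The whole problem then reduces, exactly as in the discrete discussion following Theorem \ref{th1} and in Example \ref{exSt}, to producing a positive-measure set of observation blocks on which $p_{11}$ strictly dominates every other entry $p_{ij}$ by a uniform factor; from such blocks the $2N$-cycle center part and, after adding boundary blocks for \textbf{A2}, the full barrier set $\X^\ast$ satisfying \textbf{A1'}, \textbf{A2}, \textbf{A3} are assembled.

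For the center part I would build one cycle block as follows. Primitivity of $\mathbb P_C$ fixes $R$ with $\mathbb P_C^{R}$ strictly positive and, being aperiodic on $C$, yields $1\to 1$ loops inside $C$ of every length $\geq R$. Observations drawn from $G^\ast_C$ force every positive-likelihood path to stay in $C$ at the emitting coordinates while being emittable by all states of $C$; inserting observations from a positive-measure subset of $O_1$ on which the inequality of \ref{HMMcon1} holds with a fixed multiplicative margin, placed at the loop's return-to-$1$ coordinates, tips the optimal path through state $1$. Concatenating enough such revealing observations, I would obtain a positive-measure block on which $p_{11}(x)(1-\epsilon)>p_{ij}(x)$ for all $i,j\neq 1$ while $p_{11}(x)\geq p_{i1}(x)$ and $p_{11}(x)\geq p_{1i}(x)$, with one of the latter strict for $i\neq 1$ (from the strictness in \ref{HMMcon1}); gluing $2N$ copies gives a strong center part. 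I expect \textbf{this cycle construction to be the main obstacle}: one must show the dominance holds with a \emph{uniform} factor $1-\epsilon$ over a positive-measure set and that it is $p_{11}$ --- the loop returning to $1$ --- rather than entry into a competing state that wins, which forces one to control the transition weights $p_{\Cdot j}$ against the accumulated emission margin using the cluster confinement and the primitive structure of $\mathbb P_C$.

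The boundary blocks $\X^\ast_{(1,n_1)}$ and $\X^\ast_{(n_{2N+1},n_{2N+2})}$ I would take from positive-measure subsets of $G^\ast_C$ on which the finitely many emission densities are bounded: positivity of the intra-$C$ transitions into and out of $1$ together with $f_i>0$ on $G^\ast_C$ gives the lower bound $\delta$ of \textbf{A2}, and the density bound gives $\Delta$. Since $\delta,\Delta,\epsilon$ and the first/last coordinate sets can be kept independent of $N$, I take $N$ so large that $\frac{\Delta}{\delta}(1-\epsilon)^{N}<1$, securing \textbf{A3}. Theorem \ref{th1} then yields that $\X^\ast$ consists of strong $1$-barriers of the fixed order $n_{2N+2}-n_{N+1}$.

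It remains to prove $P(X\in\X^\ast\ \text{i.o.})=1$ without stationarity. Because $Y$ is a finite irreducible chain it is recurrent, so it visits state $1$ infinitely often a.s.; I would fix stopping times $\sigma_1<\sigma_2<\cdots$ with $Y_{\sigma_j}=1$, chosen at least $M$ apart so the windows $[\sigma_j,\sigma_j+M-1]$ are disjoint, and set $E_j=\{X_{\sigma_j:\sigma_j+M-1}\in\X^\ast\}$. By the Markov property of $Z$ at $\sigma_j$, $P(E_j\mid\mathcal F_{\sigma_j})=g(X_{\sigma_j})$ with $g(x)=P(X_{1:M}\in\X^\ast\mid Z_1=(x,1))$; since $\X^\ast$ is built as a product of coordinate sets, $g(x)=c\,\mathbb I[x\in\X^\ast_{(1)}]$ with $c=P(X_{2:M}\in\X^\ast_{(2,M)}\mid Y_1=1)>0$. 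Conditionally on the path $Y$ the variables $X_{\sigma_j}$ are i.i.d.\ with law $f_1$, and $\int_{\X^\ast_{(1)}}f_1\,d\mu>0$, so the events $\{X_{\sigma_j}\in\X^\ast_{(1)}\}$ occur infinitely often a.s.\ (Borel--Cantelli); hence $\sum_j P(E_j\mid\mathcal F_{\sigma_j})=\infty$ a.s., and L\'evy's conditional Borel--Cantelli lemma gives $E_j$ i.o., that is $P(X\in\X^\ast\ \text{i.o.})=1$.
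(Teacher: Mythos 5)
Your overall architecture (center part from (i), boundary blocks from the weak cluster and primitivity for \textbf{A2}, $N$ large for \textbf{A3}, then Theorem \ref{th1}) matches the paper, and your direct conditional Borel--Cantelli argument for the i.o.\ statement is a workable substitute for the paper's route through Harris recurrence (Lemmas \ref{HMMHarris} and \ref{lemio}). But the step you yourself flag as the main obstacle --- the center part --- is genuinely not closed, and the tools you propose for it (cluster confinement and primitivity of $\mathbb{P}_C$) are not the ones that close it. The revealing set $O_1$ controls $f_1(x)p_{\Cdot 1}$ against $f_j(x)p_{\Cdot j}$, i.e.\ it compares \emph{best possible} one-step entries into states; to convert this into $p_{11}(x_{1:n})(1-\epsilon)>p_{ij}(x_{1:n})$ for a whole block you need a concrete path through $1$ whose actual transition probabilities \emph{attain} the column maxima $p_{\Cdot j}$ at every step. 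An arbitrary loop $1\to\cdots\to 1$ inside $C$ does not do this: its transitions can be far smaller than the column maxima, and at the coordinates where you place observations from $G^\ast_C$ you have no dominance at all, so a competing path sitting in another state of $C$ with a large self-transition and comparable emission density can beat yours. Placing revealing observations only at the return-to-$1$ coordinates therefore does not yield \textbf{A1'}.

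The missing idea is the choice of the distinguished state and of the cycle. The paper sets $j_k=\argmax_{j}p_{jj_{k-1}}$; since $\Y$ is finite this sequence must repeat, giving a cycle $i_{1:n}$ with $i_1=i_n$ along which $p_{i_{k-1}i_k}=p_{\Cdot i_k}$ for every $k$, and it is a state on \emph{this} cycle that gets relabelled $1$ --- it need not lie in $C$ at all ($C$ enters only through the boundary blocks, connected to state $1$ by irreducibility paths $u_{1:K}$ and $v_{1:L}$). With $x_k\in A_{i_k}$ at \emph{every} coordinate of the block, one then gets
\begin{align*}
\prod_{k} p_{i_{k-1}i_k}f_{i_k}(x_k)=\prod_{k} p_{\Cdot i_k}f_{i_k}(x_k)\geq \prod_{k} p_{\Cdot y_k}f_{y_k}(x_k)\geq\prod_{k} p_{y_{k-1}y_k}f_{y_k}(x_k)
\end{align*}
for every path $y_{1:n}$, with a uniform factor $1-\epsilon$ whenever $y_{2:n}\neq i_{2:n}$; this yields all of \textbf{A1'} at once. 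Without replacing your choice of $1\in C$ and your intra-$C$ loop by this argmax-predecessor cycle, the uniform dominance at the heart of your construction does not follow from (i) and (ii).
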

Compared to Theorem \ref{HMMTh} we have removed the assumption of stationarity of $Z$ and aperiodicity of $Y$, and replaced the assumption that $C$ is cluster with a substantially weaker assumption that it is a weak cluster. Also, the result above guarantees the existence of infinitely many \textit{strong} nodes, instead of just nodes like in Theorem \ref{HMMTh}.
\begin{proof}[Proof of Corollary \ref{HMMcor}]
Fix $j_1 \in \Y$. Denote for every $k\geq 2$
\begin{align*}
j_{k}=\argmax_{j \in \Y}p_{jj_{k-1}}.
\end{align*}
There must exist integers $u$ and $v$, $u < v$, such that $j_u=j_v$. We denote $n=v-u+1$ and $i_{1:n}=(j_v,j_{v-1},\ldots, j_u)$. If needed, we will re-label the elements of $\Y$ so that $i_1=i_n=1$. By \ref{HMMcon1} there must exist $\epsilon>0$ such that $\mu(A_j)>0$ for each state $j\in \Y$, where
\begin{align*}
A_j \DEF \left\{x\in\mathcal{X} \: | \: f_j(x)p_{ \Cdot j}(1-\epsilon)> \max_{i\in \Y,~i\ne
j}f_i(x)p_{\Cdot i}\right\}.
\end{align*}
Denote $A=A_{i_2} \times A_{i_3} \times \cdots \times A_{i_n}$. Next we show that for every $x_{1:n} \in \X \times A$
\begin{align}
&p_{11}(x_{1:n}) \geq p_{i1}(x_{1:n}),\quad \forall i \in \mathcal{Y}, \label{A1HMM1}\\
&p_{11}(x_{1:n}) > p_{1i}(x_{1:n}) ,\quad \forall i \in \mathcal{Y}\setminus \{1\},\label{A1HMM2}\\
& p_{11}(x_{1:n})(1-\epsilon) > p_{ij}(x_{1:n}), \quad \forall i,j \in \mathcal{Y} \setminus \{1\}. \label{A1HMM3}
\end{align}
Indeed, by construction of $i_{1:n}$ and $A$, for any path $y_{1:n}$ for which $y_{2:n} \neq i_{2:n}$ and for any $x_{2:n} \in A$ we have
\begin{align*}
(1-\epsilon)\prod_{k=2}^n p_{i_{k-1}i_k}f_{i_k}(x_k)>\prod_{k=2}^n p_{y_{k-1}y_k}f_{y_k}(x_k).
\end{align*}
On the other hand, for \textit{any} path $y_{1:n}$ and any $x_{2:n} \in  A$
\begin{align*}
\prod_{k=2}^n p_{i_{k-1}i_k}f_{i_k}(x_k) \geq \prod_{k=2}^n p_{y_{k-1}y_k}f_{y_k}(x_k).
\end{align*}
Thus the inequalities (\ref{A1HMM1}), (\ref{A1HMM2}) and (\ref{A1HMM3}) must hold.

Note now that there must exist $0 < \delta_0 \leq \Delta_0< \infty$ such that, defining $G_i^0=\{x \in \X \:| \: \delta_0 \leq f_i(x); \: f_j(x) \leq \Delta_0, j \in \Y\}$, we have  $\mu(G_i^0)>0$ for every state $i \in \Y$. Furthermore, denoting $G=\left(\cap_{i \in C}G_i^0\right) \setminus \left(\cup _{i\notin C}G_i \right)$, by cluster assumption we may with no loss of generality assume that $\delta_0$ is so small and $\Delta_0$ is so large that $\mu(G)>0$. Fix $j' \in C$. By irreducibility assumption there exists path $u_{1:K}$, $K \geq 2$, such that $u_1=j'$, $u_K=1$ and $p_{u_{k-1}u_k}>0$ for all $k=2, \ldots K$. Similarly, there exists path $v_{1:L}$, $L \geq 3$, such that $v_1=1$, $v_L=j'$ and $p_{v_{k-1}v_k}>0$ for all $k=2, \ldots L$. Denote $H_1=G^0_{u_2} \times \cdots \times G^0_{u_K}$, $H_2=G^0_{v_2} \times \cdots \times G^0_{v_{L}}$ and $p^*=\min\{p_{ij} \:|\: p_{ij}>0, \: i,j \in \Y \}$. With no loss of generality we may assume that $\delta_0<1$ and $\Delta_0>1$. Denote $M=K \vee L+R+1$, where $\vee$ denotes maximum, and set $\delta=(p^*\delta_0)^{M}$, $\Delta=\Delta_0^{M}$ and $N \geq 2$ so big that \textbf{A3} holds. Take
\begin{align*}
\X^*=\X \times G^{R+1} \times H_1 \times A^{2N} \times H_2 \times G^{R},
\end{align*}
$n_1=R+1+K$, $n_k=n_{k-1}+n-1$ for $k=2,\ldots,2N+1$, and $n_{2N+2}=n_{2N+1}+L-1+R$. By (\ref{A1HMM1}), (\ref{A1HMM2}) and (\ref{A1HMM3}) \textbf{A1'} holds.

Next, we will prove \textbf{A2}.
First note that by definition of sets $G_i^0$,
\begin{align*}
&p_{ij}(x) \leq \Delta, \quad \forall i,j \in \mathcal{Y}, \quad \forall  x \in \mathcal{X}^*_{(1,n_1)} \cup \mathcal{X}^*_{(n_{2N+1},n_{2N+2})}.
\end{align*}
Next, denote $\Y_C=\{i \in \Y \:| \: p_{ij}>0, \: j \in C\}$. Note that by definition of set $G$, $\mathcal{Y}^+(x)_{(1)} \subset\Y_C$ for all $x \in  \mathcal{X}^*_{(1,n_1)}$. By the primitiveness of $\mathbb{P}_C$, we have for all $x_{1:n_1} \in  \mathcal{X}^*_{(1,n_1)}$ and any $i \in \Y_C$
\begin{align*}
\quad p_{i1}(x_{1:n_1})  \geq \max_{y_{1:n_1} \colon y_1=i, \: y_{2:R+1} \in C^{R}, \: y_{R+2:n_1}=u_{1:K}}p(x_{1:n_1},y_{1:n_1}) \geq \delta>0.
\end{align*}
Also note that by definition of sets $G$, $\mathcal{Y}^+(x)_{(2)} \subset C$ for all $x \in  \mathcal{X}^*_{(n_{2N+1},n_{2N+2})}$. Denoting $n=n_{2N+1}$ and $n'=n_{2N+2}$, we have by the primitiveness of $\mathbb{P}_C$ for all $x_{n:n'} \in  \mathcal{X}^*_{(n,n')}$ and any $j \in C$
\begin{align*}
p_{1j}(x_{n:n'}) \geq \max_{y_{n:n'} \colon y_{n:n+L-1}=v_{1:L} , \: y_{n+L:n'} \in C^R, \:y_{n'}=j } p(x_{n:n'},y_{n:n'})\geq \delta>0.
\end{align*}
The arguments above show that \textbf{A2} must hold and that
\begin{align} \label{Yplusx}
\Y^+(x)_{(1)}=\Y_C, \quad \forall x \in
\X^*_{(1,n_1)}.
\end{align}
From \eqref{Yplusx}, Lemma \ref{lemio} and Lemma \ref{HMMHarris} it follows that $P(X \in \X^* \mbox{ i.o.})=1$.
\end{proof}

\subsection{Discrete $\X$}\label{disc}
Consider the case where $\X$ is discrete (finite or
countable) and $Z$ is an irreducible and recurrent Markov chain with
(discrete) state space ${\cal Z}'\subset {\cal X}\times {\cal
Y}$. Here the state-space refers to the set of possible values of
$Z$. Note that ${\cal Z}'$ can be a proper subset ${\cal X}\times
{\cal Y}$. Also note: since the transition kernel $q(z|z')$ is defined on ${\cal Z}'$,
the definition of $\Y^+(x_{1:q})$ immediately implies that
$(i,x_1)\in {\cal Z}'$ for every $i\in \Y^+(x_{1:q})_{(1)}$.
The following simple result can
be derived from Theorem \ref{thLSC}.
\begin{cor} \label{discCor} Let $\X$ be discrete and let $Z$ be an
irreducible and recurrent Markov chain with the state-space ${\cal
Z}'\subset {\cal X}\times {\cal Y}$. Then the following
conditions ensure that there exists a barrier set $\X^*$ consisting
of strong 1-barriers of fixed order and satisfying
 $P(X \in \X^* \mbox{ i.o.})=1$.
\begin{enumerate}[label=(\roman*)] \item \label{discClust} There exists $q \geq 2$ and a sequence $x_{1:q} \in \X^q$ such that
$\Y^+(x_{1:q})_{(1)}$ is non-empty and $(i,j) \in \Y^+(x_{1:q})$ for
every $i \in \Y^+(x_{1:q})_{(1)}$ and $j \in \Y^+(x_{1:q})_{(2)}$.
\item \label{discCyc} There exists $n \geq 2$ and $x^*_{1:n} \in \X^n$ such that $(x^*_1,1)\in {\cal Z}'$
and
\begin{enumerate}
\item[1. ] it holds
\begin{align*}
x_1^*=x_n^* \quad \mbox{and} \quad  p_{11}(x^*_{1:n}) > p_{ij}(x^*_{1:n}), \quad \forall i,j \in \mathcal{Y} \setminus \{1\};
\end{align*}
\item[2. ] it holds
\begin{align} \label{discIneq1}
&p_{11}(x^*_{1:n}) > p_{i1}(x^*_{1:n}),\quad \forall i \in \mathcal{Y},\\
&p_{11}(x^*_{1:n}) > p_{1i}(x^*_{1:n}) ,\quad \forall i \in
\mathcal{Y},\label{discIneq2}
\end{align}
where either inequalities \eqref{discIneq1} or inequalities \eqref{discIneq2} could be non-strict.
\end{enumerate}
\end{enumerate}
\end{cor}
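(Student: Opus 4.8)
The plan is to verify that the discrete setting satisfies every hypothesis of Theorem \ref{thLSC} and then to read off the conclusion by combining Theorem \ref{thLSC} with Theorem \ref{th1}. Three of the hypotheses are essentially free. Since $\X$ is discrete, $\mu$ is counting measure, which gives every non-empty (hence open) set measure at least $1$ and is therefore strictly positive. In the discrete topology every map $(x,x') \mapsto q(x,i|x',j)$ is continuous, in particular lower semi-continuous, and since $q(z|z')$ is a transition probability we have $q \leq 1$, so it is bounded. Finally, viewing $Z$ as a chain on the countable state space $\Z'$, the assumptions that it is irreducible and recurrent are, for a countable state space, exactly Harris recurrence in the sense of the excerpt: taking $\psi$ to be counting measure on $\Z'$, the chain is $\psi$-irreducible, every state of $\Z'$ is reachable, and recurrence yields $P(Z_k \in A \text{ i.o.} \mid Z_1 = z) = 1$ whenever $\psi(A) > 0$ and $z \in \Z'$. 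It thus remains to establish \ref{Bcluster} and \ref{Bcenter}.

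For \ref{Bcluster} I would take $E = \{x_{1:q}\}$, the singleton furnished by condition \ref{discClust}; in the discrete product topology it is open, and $\Y^+(x)$ is trivially constant on $E$, equal to $\Y^+ = \Y^+(x_{1:q})$. Condition \ref{discClust} is precisely the required closure property $(i,j) \in \Y^+$ for all $i \in \Y^+_{(1)}$, $j \in \Y^+_{(2)}$. As $\Y^+_{(1)} \neq \emptyset$, fix $i_E \in \Y^+_{(1)}$; then some $p_{i_E j}(x_{1:q}) > 0$, which forces $(x_1, i_E) \in \Z'$ (as noted just before the corollary), so by irreducibility $(x_1, i_E)$ is reachable. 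This supplies the reachable point $(x_E, i_E) = (x_1, i_E) \in E_{(1)} \times \Y^+_{(1)}$ demanded by \ref{Bcluster}.

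For \ref{Bcenter} I would build the center part cyclically from the block in condition \ref{discCyc}. Writing $x = x^*_{1:n-1}$ and using $x^*_1 = x^*_n$, for each $N \geq 2$ take the center part to be the singleton $\{(\underbrace{x, \ldots, x}_{2N}, x^*_n)\}$, which is open and non-empty, with cycle boundaries spaced by $n_{k+1} - n_k = n-1$; then each section $\X^*_{(n_k,n_{k+1})}$ equals $\{x^*_{1:n}\}$ and there are exactly $2N$ cycles. On each section the inequalities \eqref{ineqA11} and \eqref{ineqA12} hold by \eqref{discIneq1} and \eqref{discIneq2}; since $\Y$ is finite and part~1 of condition~\ref{discCyc} gives $p_{11}(x^*_{1:n}) > p_{ij}(x^*_{1:n})$ strictly for $i,j \neq 1$ (which in particular forces $p_{11}(x^*_{1:n}) > 0$), one may pick a single $\epsilon \in (0,1)$, depending only on $x^*_{1:n}$, with $p_{11}(x^*_{1:n})(1-\epsilon) > p_{ij}(x^*_{1:n})$. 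Because one of \eqref{discIneq1}, \eqref{discIneq2} is strict, the center part is \emph{strong}, i.e. it satisfies \textbf{A1'}. The $N$-independence clauses of \ref{Bcenter} hold on the nose: $\X^*_{(n_1)} = \{x^*_1\}$ and $\epsilon$ do not depend on $N$, and $\X^*_{(n_{2N+1})} = \{x^*_n\} = \{x^*_1\}$ lies in the compact set $K = \{x^*_1\}$ for every $N$. Lastly $(x^*_1,1) \in \Z'$ by condition~\ref{discCyc}, so by irreducibility $(x^*_1,1)$ is reachable, giving the final ingredient of \ref{Bcenter}.

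With \ref{Bcluster} and \ref{Bcenter} in hand, Theorem \ref{thLSC} produces a set $\X^*$ satisfying \textbf{A1'}, \textbf{A2}, \textbf{A3} and, by Harris recurrence, $P(X \in \X^* \text{ i.o.}) = 1$; Theorem \ref{th1} then upgrades \textbf{A1'} to the statement that $\X^*$ consists of \emph{strong} $1$-barriers of the fixed order $n_{2N+2} - n_{N+1}$, which is the assertion of the corollary. The only genuinely delicate point I expect is the Harris-recurrence reduction: one must check that the general-state-space definition, phrased for all $z \in \Z$ via the maximal irreducibility measure $\psi$, collapses to the countable-chain recurrence we are handed, by identifying $\psi$ with counting measure on $\Z'$ and restricting to starting states in $\Z'$, where the initial law is supported. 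Everything else is bookkeeping made routine by the discreteness of $\X$.
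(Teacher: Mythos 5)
Your proposal is correct and follows essentially the same route as the paper's proof: the same singleton $E=\{x_{1:q}\}$ for \ref{Bcluster}, the same cyclic singleton center part $\{(x^*_{1:n-1},\ldots,x^*_{1:n-1},x^*_n)\}$ with $2N$ blocks for \ref{Bcenter}, and the same appeal to Theorem \ref{thLSC} together with Theorem \ref{th1}, using counting measure and the automatic continuity and boundedness of $q$ in the discrete case. The only point of divergence is the state-space formality you flag at the end: rather than restricting to $\Z'$, the paper extends the kernel by fixing $(x',i')\in\Z'$ and setting $q(x',i'|z)=1$ for all $z\in\Z\setminus\Z'$ (choosing $x'\neq x_2$ so that $\Y^+(x_{1:q})$ is unchanged), which makes $Z$ Harris recurrent on the full product space $\X\times\Y$ and every point of $\Z'$ reachable in the literal sense required by Theorem \ref{thLSC} -- a cosmetic difference from your reduction, with the same substance.
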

\begin{proof}
The proof is straightforward application of Theorem \ref{thLSC}. To formally apply Theorem \ref{thLSC}, $Z$ should be viewed as a Markov chain on product space $\Z=\Z \times \Y$. In that perspective $Z$ may no longer be irreducible. However with no loss of generality we may assume that $Z$ is $\psi$-irreducible and Harris recurrent, where support of $\psi$ is $\Z'$. Indeed, this can be achieved by fixing $(x',i') \in \Z'$ and taking $q(x',i'|z)=1$ for all $z \in \Z \setminus \Z'$. Then all elements of $\Z'$ are reachable. Also, assuming with no loss of generality that $x' \neq x_2$, we have that $\Y^+(x_{1:q})$ is the same regardless if it is defined on the product space $\Z$ or subspace $\Z'$. Next, simply take $E=\{x_{1:q}\}$ so that \ref{Bcluster} holds with
$E_{(1)}=\{x_1\}$ and $i_E$ being any element of
$\Y^+(x_{1:q})_{(1)}$. To see that \ref{Bcenter} holds, denote
$x^*=x^*_{1:n-1}$ and note that for arbitrary $N \geq 2$ the strong
center part of the barrier set can be taken to be
\begin{align*}
\X^*_{(n_1:n_{2N+1})}=\{(\underbrace{x^*, x^*, \ldots, x^*}_{\text{$2N$ blocks of $x^*$}}, x^*_n)\}.
\end{align*}
In the discrete case measure $\mu$ is counting measure
on $2^\X$, which is strictly positive, and the functions $(x',x)
\mapsto q(x,j|x',i)$ are always continuous and bounded. Therefore
Theorem \ref{thLSC} applies.
\end{proof}

\noindent
{\bf Remarks about the condition (i).}
\begin{enumerate}
  \item If $Z$ is stationary MC, then the set $\Y^+(x_{1:q})_{(1)}$ consists
of states $i$ satisfying the following property: there exists
$y_{1:q}\in {\cal Y}^q$ such that $y_1=i$ and
$p(x_{1:q},y_{1:q})>0$. Similarly $\Y^+(x_{1:q})_{(2)}$ consists
of states $j$ satisfying the following property: there exists
$y_{1:q}\in {\cal Y}^q$ such that $y_q=j$ and
$p(x_{1:q},y_{1:q})>0$. However, given $i\in
\Y^+(x_{1:q})_{(1)}$ and $j\in \Y^+(x_{1:q})_{(2)}$, there need
not necessary be any path $y_{1:q}$ beginning with $i$ (i.e.
$y_1=i$) and ending with $j$ (i.e. $y_q=j$) such that
$p(x_{1:q},y_{1:q})>0$. The condition (i) ensures that  for
every pair  $i\in \Y^+(x_{1:q})_{(1)}$ and $j\in
\Y^+(x_{1:q})_{(2)}$ such a path exists and then $(i,j)\in
\Y^+(x_{1:q}).$ Interestingly, in ergodic theory, this property
is the same as the {\it subpositivity} of the word $x_{1:q}$ for
{\it factor map} $\pi: {\cal Z}\to {\cal X}, \pi(x,y)=x$, see
(\cite{yoo}, {Def 3.1}). Thus (i) ensures that a.e. realization
of $X$ process has infinitely many subpositive words.
  \item Let us now
argue that for stationary $Z$, the subpositivity  is also very
close to be a necessary property of a barrier. Indeed, if
$x_{k:l}$ ($1<k<l<n$) is a barrier containing a strong 1-node,
then for any Viterbi path $v(x_{1:n})$, $(v_{k},v_{l})\in
\Y^+(x_{k:l})$. Suppose now there exists another words of
observations $x'_{1:k-1}$ and $x'_{l+1:n}$ such that the
corresponding Viterbi path $v'=v(x'_{1:k-1},x_{k:l},x'_{l+1:n})$
satisfies: $v'_k\ne v_k$ and $v'_l\ne v_l$. Then also
$(v'_{k},v'_{l})\in \Y^+(x_{k:l})$. Take now $v'_{k}\in
\Y^+(x_{k:l})_{(1)}$ and $v_l\in \Y^+(x_{k:l})_{(2)}$ and ask:
does $(v'_{k},v_l)\in \Y^+(x_{k:l})$? Since  $x_{k:l}$ is  a
barrier containing a strong 1-node, then by piecewise
construction there exists a Viterbi path
$w=v(x'_{1:k-1},x_{k:n})$ such that $w_k=v'_k$ and $w_l=v_l$ and
so $(v'_{k},v_l)\in \Y^+(x_{k:l})$. We have seen that if  $i\in
\Y^+(x_{k:l})_{(1)}$ is  such that for some $x'_{1:k-1}$,
$v_k(x'_{1:k-1},x_{k:n})=i$ and if $j\in \Y^+(x_{k:l})_{(2)}$ is
such that for some $x'_{l+1:n}$, $v_l(x_{1:l},x'_{l+1:n})=j$,
then $(i,j)\in \Y^+(x_{k:l})$. Therefore, if every $i\in
\Y^+(x_{k:l})_{(1)}$ and every $j\in \Y^+(x_{k:l})_{(2)}$
satisfies above-stated property of being included into a Viterbi
path (and often this is the case), then (i) and also {\bf B1} is
a  necessary property of a barrier.
\end{enumerate}

\begin{example}
Let $\mathcal{X}=\mathcal{Y}=\{1,2\}$, and assume that $X$ and $Y$ are Markov chains both having the transition matrix $\begin{pmatrix}
  p & 1-p \\
      q & 1-q \\
\end{pmatrix}$,
where $p, q \in (0,1)$. Then, as is shown in \cite{MomentBound}, the transition matrix of $Z$ has the form
\begingroup
\renewcommand*{\arraystretch}{1.3}
\renewcommand{\kbldelim}{(}
\renewcommand{\kbrdelim}{)}
\begin{equation*} 
\mathbb{Q} \DEF
\kbordermatrix{  & (1,1) & (1,2) & (2,1) & (2,2)\\
     (1,1) & p\lambda_1 & p(1-\lambda_1) & p(1-\lambda_1) & 1+p\lambda_1-2p \\
     (1,2) & p\lambda_2 & p(1-\lambda_2) & q-p\lambda_2 & 1+p\lambda_2-q-p \\
     (2,1) & q\mu_1 & q(1-\mu_1) & p-q\mu_1 & 1+q\mu_1-p-q \\
     (2,2) & q\mu_2 & q(1-\mu_2) & q(1-\mu_2) & 1+q\mu_2-2q \\
    },
\end{equation*}
\endgroup
where
\begin{align} \label{LamCond}
&\lambda_1 \in \left[{2p-1\over p}\vee 0,1 \right],\quad
\lambda_2 \in \left[{q+p-1\over p}\vee 0,{q \over p}\wedge 1 \right], \\
&\mu_1\in \left[{p+q-1\over q}\vee 0,{p\over q}\wedge 1 \right],
\quad \mu_2 \in \left[{2q-1\over q}\vee 0,1 \right]. \label{MuCond}
\end{align} Here $\vee$ and $\wedge$ denote the maximum and minimum,
respectively. When $Z$ is stationary, then $X$ and $Y$ are
independent if and only if
$$\lambda_1=\mu_1=p,\quad
\lambda_2=\mu_2=q.$$ 

Assume now that $\lambda_i$ and $\mu_i$ are not allowed to have the
extreme values of the constraints (\ref{LamCond}) and
(\ref{MuCond}). Then the elements of $\mathbb{Q}$ are positive,
which implies that $\Z'={\cal Z}$  and $Z$
is irreducible and recurrent. Also  \ref{discClust} of Corollary
\ref{discCor} trivially holds for any $x_{1:q}\in \{1,2\}^q$, where
$q\geq 2$. Thus the existence of infinitely many strong nodes for
almost every realization of $X$ is guaranteed if  there exists
$x^*_{1:n} \in \{1,2\}^n$, $n \geq 2$, such that
\begin{align} \label{B3cond}
x^*_1=x^*_n \quad \mbox{and} \quad p_{11}(x^*_{1:n})> p_{ij}(x^*_{1:n}), \quad \forall (i,j) \in \Y^2 \setminus \{(1,1)\}.
\end{align}
Taking $x^*_{1:n}=(1,1)$, we have that \eqref{B3cond} holds whenever
\begin{align*}
&p\lambda_1 > \max\{p(1-\lambda_1), p \lambda_2, p(1-\lambda_2)\}\\
&\Leftrightarrow  \quad \lambda_1 > \max\{1-\lambda_1,  \lambda_2, 1-\lambda_2\}\\
&\Leftrightarrow  \quad \lambda_1 >  \lambda_2 \vee (1-\lambda_2) .
\end{align*}
Taking $x^*_{1:n}=(2,2)$, we have that \eqref{B3cond} holds when
\begin{align*}
p-q\mu_1 > \max\{1+q\mu_1-p-q,q(1-\mu_2),1+q\mu_2-2q\}.
\end{align*}
Switch now the labels of $\Y$. Taking $x^*_{1:n}=(1,1)$, we obtain that \eqref{B3cond} holds when
\begin{align*}
&p(1-\lambda_2) > \max\{ p\lambda_1, p(1-\lambda_1), p\lambda_2)\}\\
&\Leftrightarrow  \quad 1-\lambda_2 > \max\{ \lambda_1, 1-\lambda_1, \lambda_2\}\\
&\Leftrightarrow  \quad -\lambda_2 > \max\{ \lambda_1-1, -\lambda_1, \lambda_2-1\}\\
&\Leftrightarrow  \quad \lambda_2 < (1-\lambda_1) \wedge \lambda_1.
\end{align*}
Taking $x^*_{1:n}=(2,2)$, we have that \eqref{B3cond} holds when
\begin{align*}
1+qp_2-2q> \max \{p-q \mu_1, 1+q\mu_1-p-q, q(1-\mu_2)\}.
\end{align*}
Further conditions can be found with $n=3,4, \ldots$
\end{example}

\subsection{Linear Markov switching model}\label{sec:LMSW}
Let $\X= \mathbb{R}^d$ for some $d \geq 1$ and for each state $i \in \Y$ let $\{\xi_k(i)\}_{k \geq 2}$ be an i.i.d. sequence of random variables on $\X$ with $\xi_2(i)$ having density $h_i$ with respect to Lebesgue measure on $\mathbb{R}^d$. We consider the ``linear Markov switching model'', where $X$ is defined recursively by
\begin{align}\label{LMSM}
X_{k}=F(Y_k)X_{k-1}+ \xi_k(Y_k), \quad k \geq 2.
\end{align}
Here $F(i)$ are some $d \times d$ matrices, $Y=\{Y_k\}_{k\geq 1}$ is a Markov chain with transition matrix $(p_{ij})$, $X_1$ is some random variable on $\X$, and random variables $\{\xi_k(i)\}_{k \geq 2, \: i \in \Y}$ are assumed to be independent and independent of $X_1$ and $Y$. Recall that for Markov switching model, the transition density expresses as $q(x,j|x',i)=p_{ij}f_j(x|x')$. For the current model measure $\mu$ is Lebesgue measure on $\mathbb{R}^d$ and $f_j(x|x')=h_j(x-F(j)x')$. When $F(i)$ are zero-matrices, then the linear Markov switching model simply becomes HMM with $h_i$ being the emission densities. When $d=1$, we obtain the ``switching linear autoregression of order 1''. The switching linear autoregressions are popular in econometric modelling, see e.g. \cite{HMMbook} and the references therein.

We will now apply Theorem \ref{thLSC} to the linear Markov switching model. The requirement (of Theorem \ref{thLSC}) that $\mu$ must be strictly positive is trivially fulfilled in the case where $\mu$ is Lebesgue measure. The requirement that functions $(x',x) \mapsto q(x,i|x',j)$ must be lower semi-continuous and bounded is fulfilled when $h_j$ are lower semi-continuous and bounded (composition of lower semi-continuous function with continuous function is lower semi-continuous). Deriving simple conditions which ensure \ref{Bcluster} is also quite easy. In what follows, let $\| \cdot \|$ denote the 2-norm on $\X=\mathbb{R}^d$, and for any $x \in \X$ and $r>0$ let $B(x,r)$ denote an open ball in $\X$ with respect to 2-norm with center point $x$ and radius $r>0$.

\begin{lemma}\label{LMB1B2Lemma} Let $Z$ be the linear Markov switching model. If the following conditions are fulfilled, then $Z$ satisfies \ref{Bcluster}.
\begin{enumerate}[label=(\roman*)]
\item  \label{LMprimit}There exists set $C\subset \Y$ and $r>0$ such that the following two conditions are satisfied:
\begin{enumerate}
\item[1.] for $x \in B(0,r)$, $h_i(x)>0$ if and only if $i \in C$;
  \item[2.] the sub-stochastic matrix $\mathbb{P}_C= (p_{ij})_{i,j\in C}$ is primitive, i.e. there exists $R \geq 1$ such that matrix $\mathbb{P}_C^R$ has only positive elements.
\end{enumerate}
\item \label{LMreach} Denote $\Y_C=\{i \in \Y \:| \: p_{ij}>0, \: j\in C\}$. There exists $i_E \in \Y_C$ such that $(0,i_E)$ is reachable.
\end{enumerate}
\end{lemma}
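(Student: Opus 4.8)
The plan is to realise the open set demanded by \ref{Bcluster} as a small cube of balls centred at the origin, and then to read off $\Y^+$ directly from the two hypotheses. Put $L=\max_{j\in\Y}\|F(j)\|$, which is finite since $\Y$ is finite, fix $\rho\in(0,r/(1+L))$, set $q=R+2$, and let $E$ be the $q$-fold product $B(0,\rho)\times\cdots\times B(0,\rho)\subset\X^q$, an open set with $E_{(1)}=B(0,\rho)$. The governing observation is that when every coordinate $x_k$ lies in $B(0,\rho)$ then for any states $y_{k-1},y_k$ the emission argument obeys $\|x_k-F(y_k)x_{k-1}\|\le\rho+L\rho<r$, so it lies in $B(0,r)$; by the first requirement of \ref{LMprimit} the density value $h_{y_k}(x_k-F(y_k)x_{k-1})$ is then positive exactly when $y_k\in C$.

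Next I would compute $\Y^+(x)$ for $x\in E$. By the Markov-switching factorisation $q(x,j|x',i)=p_{ij}h_j(x-F(j)x')$, the inequality $p_{ij}(x_{1:q})>0$ is equivalent to the existence of a path $y_{1:q}$ with $y_1=i$, $y_q=j$, all transition factors $p_{y_{k-1}y_k}>0$ and all emission factors positive for $k=2,\ldots,q$. By the previous paragraph the emission factors are positive iff $y_k\in C$ for every $k\ge2$. Hence such a path exists iff $j=y_q\in C$ and $i=y_1$ admits a transition into $C$, that is $i\in\Y_C$: the intervening states $y_2,\ldots,y_q$ may always be taken inside $C$ because $q-2=R$ and primitivity gives $(\mathbb{P}_C^{R})_{y_2y_q}>0$ for all $y_2,y_q\in C$ (here one uses that a primitive matrix has no zero row, whence $\mathbb{P}_C^{m}>0$ for every $m\ge R$). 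Therefore $\Y^+(x)=\Y_C\times C$ for every $x\in E$; in particular $\Y^+$ is constant over $E$, with $\Y^+_{(1)}=\Y_C$ and $\Y^+_{(2)}=C$.

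Because $\Y^+$ is literally the rectangle $\Y_C\times C$, the decisive product property of \ref{Bcluster}, namely that $(i,j)\in\Y^+$ for every $i\in\Y^+_{(1)}$ and $j\in\Y^+_{(2)}$, holds automatically. It then remains only to produce a reachable point in $E_{(1)}\times\Y^+_{(1)}=B(0,\rho)\times\Y_C$. Hypothesis \ref{LMreach} furnishes $i_E\in\Y_C$ with $(0,i_E)$ reachable, and $0\in B(0,\rho)$, so $(x_E,i_E)=(0,i_E)$ is the required point. This completes the verification of \ref{Bcluster}.

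The argument is short once the hypotheses are in hand; the step requiring the most care is the uniform identification of $\Y^+$ on $E$, which rests on combining two ingredients correctly. First, the norm estimate must force every emission argument simultaneously into $B(0,r)$ for all admissible state choices, which is exactly what the choice $\rho<r/(1+L)$ secures. Second, primitivity of $\mathbb{P}_C$, together with the elementary fact $\mathbb{P}_C^{m}>0$ for all $m\ge R$, is what makes every $i\in\Y_C$ reach every $j\in C$ and so produces the clean product structure; note in passing that $\Y_C\ne\emptyset$, since primitivity renders $\mathbb{P}_C$ irreducible, giving $C\subseteq\Y_C$.
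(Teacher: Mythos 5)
Your proof is correct and follows essentially the same route as the paper: take $E$ to be an $(R+2)$-fold product of a small ball around the origin chosen so that every emission argument $x_k-F(y_k)x_{k-1}$ lands in $B(0,r)$, deduce $\Y^+(x)=\Y_C\times C$ on $E$ via primitivity of $\mathbb{P}_C$, and invoke condition (ii) for the reachable point $(0,i_E)$. The only difference is cosmetic — you make the radius $\rho<r/(1+L)$ explicit where the paper just asserts the existence of a suitable $r_0$.
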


Conditions \ref{LMprimit} and \ref{LMreach} are not very restrictive. For example, when all the elements of $\mathbb{P}$ are positive, then \ref{LMprimit} is fulfilled if densities $h_i$ are either positive around 0 or zero around 0 and there exists at least one $j \in \Y$ such that $h_j$ is positive around 0. If densities $h_i$ are all positive around 0, then \ref{LMprimit} is fulfilled when $\mathbb{P}$ is primitive with $C=\Y$. If $h_i$ are positive everywhere and $Y$ is irreducible, then all points in $\Z$ are reachable and so \ref{LMreach} trivially holds. 
\begin{proof}[Proof of Lemma \ref{LMB1B2Lemma}.] There must exist $r_0 >0$ such that
\begin{align}\label{lessr}
\|x-F(j)x'\|<r, \quad \forall j \in \Y, \quad \forall x,x' \in B(0,r_0).
\end{align}
By \ref{LMprimit} there exists $R \geq 1$ such that $\mathbb{P}_C^R$ contains only positive elements. We take $E=B(0,r_0)^{R+2}$. Fixing $x_{1:R+2} \in E$, we have for any $i,j \in \Y$
\begin{align*}
p_{ij}(x_{1:R+2})&=\max_{y_{1:R+2} \colon (y_1,y_{R+2})=(i,j)}\prod_{k=2}^{R+2}p_{y_{k-1}y_k}h_{y_k}(x_k-F(y_k)x_{k-1}).
\end{align*}
Together with \eqref{lessr} and \ref{LMprimit} this implies that $p_{ij}(x_{1:R+2})>0$ if and only if $i \in \Y_C$ and $j \in C$. Hence $\Y^+(x)=\Y_C \times C$ for every $x \in E$. Together with \ref{LMreach} this implies that \ref{Bcluster} holds with $x_E=0$.
\end{proof}

As for the condition \ref{Bcenter}, the following lemma provides one possible way to construct the center part of the barrier set.
\begin{lemma} \label{LMB3}Let $Z$ be the linear Markov switching model. If the following condition is fulfilled, then $Z$ satisfies \ref{Bcenter}: there exists $x^* \in \X$ such that
\begin{enumerate}[label=(\roman*)] \item \label{LMB3p11}
$p_{11}=\max_{i \in \Y}p_{i1}$;
\item \label{LMB3reach}$(x^*,1)$ is reachable;
\item \label{LMB3cycle}$h_i$ is continuous at $x^*-F(i)x^*$ for all $i \in \Y$, and
\begin{align*}
p_{11}h_1(x^*-F(1)x^*)>p_{ij}h_j(x^*-F(j)x^*), \quad \forall i \in \Y, \quad \forall j \in \Y \setminus \{1\}.
\end{align*}
\end{enumerate}
\end{lemma}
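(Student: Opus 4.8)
The plan is to build the center part from a single constant cycle of length two, repeated $2N$ times, exploiting the multiplicative form $q(x,j|x',i)=p_{ij}h_j(x-F(j)x')$ of the Markov switching model. For a pair $(x_1,x_2)$ there is only one admissible two-step path from $i$ to $j$, so $p_{ij}(x_1,x_2)=q(x_2,j|x_1,i)=p_{ij}h_j(x_2-F(j)x_1)$ for all $i,j\in\Y$. This explicit formula is what makes all the required inequalities computable, and the cycle I take is the constant pair $(x^*,x^*)$.

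First I would check the inequalities of \textbf{A1}, in the strong form \textbf{A1'}, at the point $(x^*,x^*)$. The decisive observation is that $p_{11}(x_1,x_2)$ and $p_{i1}(x_1,x_2)$ carry the \emph{same} emission factor $h_1(x_2-F(1)x_1)$, so \eqref{ineqA11}, namely $p_{11}(x_1,x_2)\ge p_{i1}(x_1,x_2)$, collapses to $p_{11}\ge p_{i1}$. This is exactly hypothesis \ref{LMB3p11}, and it holds for \emph{every} pair $(x_1,x_2)$, not just near $x^*$. The remaining inequalities come from \ref{LMB3cycle}: taking the outer index equal to $1$ gives $p_{11}(x^*,x^*)>p_{1j}(x^*,x^*)$ for $j\ne1$, which is \eqref{ineqA12} in strict form, while taking both indices $\ne1$ gives $p_{11}(x^*,x^*)>p_{ij}(x^*,x^*)$, leaving slack to insert a factor $1-\epsilon$ for some small $\epsilon>0$. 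Since \ref{LMB3cycle} forces $h_1(x^*-F(1)x^*)>0$, we also have $p_{11}(x^*,x^*)>0$, and $\epsilon$ depends only on the finitely many strict inequalities at $x^*$, hence is independent of $N$.

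Next I would pass from the point to an open ball by continuity. Each map $(x_1,x_2)\mapsto h_j(x_2-F(j)x_1)$ is continuous at $(x^*,x^*)$ by the continuity assumption in \ref{LMB3cycle}, so each of the finitely many strict inequalities survives on a neighbourhood of $(x^*,x^*)$; intersecting and shrinking to a product, I would pick an open ball $B\ni x^*$ on which all of them hold and on which $p_{11}(x_1,x_2)>0$, for every $(x_1,x_2)\in B\times B$. Taking $n_k=n_1+(k-1)$ and setting the center part to be $\X^*_{(n_1,n_{2N+1})}=\underbrace{B\times\cdots\times B}_{2N+1}$, every segment $\X^*_{(n_k,n_{k+1})}$ equals $B\times B$ and therefore satisfies \textbf{A1} with \eqref{ineqA12} strict; this set is open, non-empty and has $2N$ cycles, so it is a strong center part.

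It then remains to verify the bookkeeping in \ref{Bcenter}: the set $\X^*_{(n_1)}=B$ and the parameter $\epsilon$ are both independent of $N$; the projection $\X^*_{(n_{2N+1})}=B$ lies in the compact set $K=\overline{B}$, which is independent of $N$; and by \ref{LMB3reach} the point $(x^*,1)$ is reachable with $x^*\in B=\X^*_{(n_1)}$. The one genuinely delicate step is the collapse of \eqref{ineqA11} to \ref{LMB3p11}: it is precisely the multiplicative separation of transition and emission in a Markov switching model that lets this non-strict inequality hold globally rather than only near $x^*$, and this is exactly why \ref{LMB3cycle} needs strictness only for $j\ne1$ while \ref{LMB3p11} can remain a mere maximality condition. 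Note in particular that the stronger requirement of Example \ref{exSt} (strict inequality for \emph{all} $(i,j)\ne(1,1)$) is not available here, so the product-structure argument, rather than a direct appeal to that example, is what drives the proof.
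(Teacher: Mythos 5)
Your proposal is correct and follows essentially the same route as the paper: a constant length-two cycle at $x^*$, the explicit formula $p_{ij}(x_1,x_2)=p_{ij}h_j(x_2-F(j)x_1)$ so that \eqref{ineqA11} reduces globally to \ref{LMB3p11} while the strict inequalities of \ref{LMB3cycle} persist on a ball $B\ni x^*$ by continuity, and then $\X^*_{(n_1,n_{2N+1})}=B^{2N+1}$ with $\epsilon$, $\X^*_{(n_1)}$ and $K=\overline{B}$ independent of $N$. The only cosmetic difference is that the paper routes the continuity argument through balls around the points $x^*-F(j)x^*$ in the argument of $h_j$ before pulling back to a ball around $x^*$, whereas you invoke continuity of the composed maps directly; the content is identical.
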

\begin{proof} By \ref{LMB3cycle} there must exist $\epsilon>0$ and $r>0$ such that
\begin{multline} \label{cycle2}
p_{11}h_1(x)(1-\epsilon)>p_{ij}h_j(x'), \quad \forall i \in \Y, \quad \forall j \in \Y \setminus \{1\}, \\
\forall x \in B(x^*-F(1)x^*,r), \quad \forall x' \in B(x^*-F(j)x^*,r).
\end{multline}
Also there must exist $r'>0$ such that
\begin{align} \label{cycle3}
\|x -F(j)x' - (x^* -F(j)x^*) \|<r, \quad \forall j \in \Y, \quad \forall x,x' \in B(x^*,r').
\end{align}
For some $N \geq 2$ we define the center part of the barrier set by
\begin{align*}
\X^*_{(n_1,n_{2N+1})}=B(x^*,r')^{2N+1}
\end{align*}
We confirm that $\X^*_{(n_1,n_{2N+1})}$ is indeed a strong center part of a barrier set, i.e. that it satisfies \textbf{A1'}. We take $n_{k+1}-n_k=1$ for all $k=1, \ldots,2N$. Let $x',x \in B(x^*,r')$. We have for all $i \in \Y$ and $j \in \Y \setminus \{1\}$
\begin{align*}
p_{11}(x',x)(1-\epsilon)=p_{11}h_1(x-F(1)x')(1-\epsilon)> p_{ij}h_{j}(x-F(j)x')=p_{ij}(x',x).
\end{align*}
Here the inequality follows from \eqref{cycle3} and \eqref{cycle2}. On the other hand we have by \ref{LMB3p11} for all $i \in \Y$
\begin{align*}
p_{11}(x',x)=p_{11}h_1(x-F(1)x')\geq p_{i1}h_1(x-F(1)x')=p_{i1}(x',x).
\end{align*}
The arguments above show that \textbf{A1'} does indeed hold. Hence by \ref{LMB3reach} \ref{Bcenter} holds.
\end{proof}
For the sake of simplicity Lemma \ref{LMB3} uses only cycles of length 2 in the construction of barrier set, but this could easily be generalized to include cycles of arbitrary length.

\begin{remark} In the proofs of Lemmas \ref{LMB1B2Lemma} and \ref{LMB3} the specific structure of the linear Markov switching model has not played a very big role, so a natural question is, if analogous results could be proven for more general models. More specifically, we can consider a Markov switching model, where instead of recursion \eqref{LMSM} $X$ is more generally defined by
\begin{align} \label{LMSGen}
X_{k}=G(Y_k, X_{k-1})+ \xi_k(Y_k), \quad k \geq 2,
\end{align}
where $G(i,\cdot) \colon \mathbb{R}^d \rightarrow \mathbb{R}^d$ are some continuous functions. For this model, the transition kernel density expresses as $q(x,j|x',i)=p_{ij}h_j(x-G(j,x'))$. The statement of Lemma \ref{LMB3} indeed holds for this model, if we replace the condition \ref{LMB3cycle} with the following generalized version: $h_i$ is continuous at $x^*-G(i, x^*)$ for all $i \in \Y$, and
\begin{align*}
p_{11}h_1(x^*-G(1, x^*))>p_{ij}h_j(x^*-G(j,x^*)), \quad \forall i \in \Y, \quad \forall j \in \Y \setminus \{1\}.
\end{align*}
The statement of Lemma \ref{LMB1B2Lemma} also holds for model \eqref{LMSGen}, if we demand that the $G(i,\cdot)$ satisfy the following additional condition:
\begin{align} \label{GCond}
G(i,0)=0, \quad \forall i \in \Y.
\end{align}
If \eqref{GCond} is too restrictive, a different approach is needed to prove \ref{Bcluster}. In any case, if $h_i$ are everywhere positive and $\mathbb{P}$ is primitive, then, as it is easy to verify, \ref{Bcluster} holds regardless of whether \eqref{GCond} holds or not.
\end{remark}

It remains to address the issue of Harris recurrence of the linear Markov switching model. In what follows, for $x \in \X$ we denote with $\| x \|_1$ the 1-norm of $x$, and for a $d \times d$ matrix $A$ we denote with $\| A \|_1$ the 1-norm of matrix $A$, that is $\|A\|_1$ is the maximum absolute column sum of $A$.
\begin{lemma} \label{LMHarris} Let $Z$ be the linear Markov switching model. If the following conditions are fulfilled, then $Z$ is Harris recurrent:
\begin{enumerate}[label=(\roman*)] \item $Z$ is $\psi$-irreducible and support of $\psi$ has non-empty interior;
\item $\mathbb{E}\|\xi_2(i) \|_1< \infty$ for all $i \in \Y$;
\item $\max_{i \in \Y}\sum_{j \in \Y}p_{ij}\|F(j)\|_1<1$.
\end{enumerate}
\end{lemma}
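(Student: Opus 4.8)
The plan is to verify a Foster--Lyapunov drift condition and combine it with the petiteness of compact sets, the latter following from the Feller property of $Z$ together with the assumption that $\mathrm{supp}(\psi)$ has non-empty interior. Throughout write $\rho=\max_{i\in\Y}\sum_{j\in\Y}p_{ij}\|F(j)\|_1<1$ (by (iii)) and $b=\max_{i\in\Y}\sum_{j\in\Y}p_{ij}\,\mathbb{E}\|\xi_2(j)\|_1$, which is finite by (ii) since the $\xi_k(j)$ are identically distributed in $k$.

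First I would take the norm-like Lyapunov function $V(x,i)=\|x\|_1$ and estimate its one-step mean increment. Writing $PV(x',i)=\mathbb{E}[V(Z_2)\mid Z_1=(x',i)]$ and using the recursion \eqref{LMSM}, the triangle inequality and the bound $\|F(j)x'\|_1\le\|F(j)\|_1\|x'\|_1$, one obtains for every $(x',i)\in\Z$
\[
PV(x',i)=\sum_{j\in\Y}p_{ij}\,\mathbb{E}\|F(j)x'+\xi_2(j)\|_1\le\rho\|x'\|_1+b,
\]
so that $PV(x',i)-V(x',i)\le-(1-\rho)\|x'\|_1+b$. Since $\rho<1$, fixing any $\beta>0$ and setting $M=(b+\beta)/(1-\rho)$ and $C=\{(x,i)\in\Z:\|x\|_1\le M\}$, we get the drift inequality $PV-V\le-\beta+(b+\beta)\,\mathbb{I}_C$ on all of $\Z$, where $C$ is compact (a closed $1$-norm ball in $\X=\mathbb{R}^d$ times the finite set $\Y$).

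The remaining and main task is to show that $C$ -- and more generally every sublevel set $\{V\le n\}$ -- is petite. Here I would first note that $Z$ is weak Feller: for bounded continuous $f$ the map $Pf(x',i)=\sum_{j\in\Y}p_{ij}\,\mathbb{E}[f(F(j)x'+\xi_2(j),j)]$ is continuous in $x'$ by dominated convergence, since $x'\mapsto F(j)x'+\xi_2(j)$ is continuous and $f$ is bounded. A $\psi$-irreducible Feller chain whose maximal irreducibility measure has support with non-empty interior is a $T$-chain \cite[Theorem 6.2.9]{MT}, and for a $\psi$-irreducible $T$-chain every compact set is petite \cite[Theorem 6.2.5]{MT}. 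Hence $C$ and all the compact sublevel sets $\{V\le n\}$ are petite.

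Finally, $Z$ is $\psi$-irreducible by (i), the function $V$ is unbounded off petite sets (its sublevel sets are compact, hence petite), and $PV-V\le0$ on $C^c$; by the drift criterion for Harris recurrence \cite[Theorem 9.1.8]{MT} the chain $Z$ is Harris recurrent (in fact the stronger inequality $PV-V\le-\beta+(b+\beta)\mathbb{I}_C$ with petite $C$ yields positive Harris recurrence). The only genuinely delicate point is the petiteness of compact sets, obtained through the Feller property and the non-empty interior of $\mathrm{supp}(\psi)$; the drift computation itself is a routine application of the triangle inequality together with assumptions (ii) and (iii).
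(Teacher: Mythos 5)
Your proposal is correct and follows essentially the same route as the paper: the same Lyapunov function $V(x,i)=\|x\|_1$ (the paper adds a constant $1$), the same triangle-inequality drift computation using (ii) and (iii), the same weak Feller argument via dominated convergence combined with the non-empty interior of $\mathrm{supp}(\psi)$ to get petiteness of compact sets, and the same appeal to the drift criterion \cite[Th. 9.1.8]{MT}. The only cosmetic difference is that you phrase the drift as a positive-recurrence-type bound $PV-V\le-\beta+(b+\beta)\mathbb{I}_C$, whereas the paper only needs $PV-V\le 0$ off a compact set.
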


Proof of this statement is given in Appendix \ref{LMHarrisProof}.

Applying the results above to the case where $h_i$ are Gaussian yields
\begin{cor} \label{GLMcor} Let $Z$ be the linear Markov switching model, with densities $h_i$ being Gaussian with respective mean vectors $\mu_i$ and positive definite covariance matrices $\Sigma_i$. If the following conditions are fulfilled, then there exist a barrier set $\X^*$ consisting of strong $1$-barriers of fixed order and satisfying $P(X \in \X^* \mbox{ i.o.})=1$.
\begin{enumerate}[label=(\roman*)]\item \label{GLMprim} Matrix $\mathbb{P}=(p_{ij})$ is primitive, i.e. there exists $R$ such that $\mathbb{P}^R$ consists of only positive elements.
\item \label{p11Max} It holds $p_{11}=\max_{i \in \Y}p_{i1}$.
\item \label{GLMcycle}Matrix $\mathbb{I}_d - F(1)$, where $\mathbb{I}_d$ denotes the identity matrix of dimension $d$, is non-singular, and for all $i \in \Y$ and $j \in \Y \setminus \{1\}$
\begin{align*}
(\mathbb{I}_d-F(j))(\mathbb{I}_d-F(1))^{-1}\mu_1 \in \mathbb{R}^d \setminus H_{ij},
\end{align*}
where
\begin{align*}
H_{ij} \DEF \begin{cases} \emptyset, &\mbox{if $p_{ij}=0$ or $\dfrac{p_{11}\sqrt{|\Sigma_j |}}{p_{ij} \sqrt{|\Sigma_1|}}>1$}, \\
\left\{x \in \mathbb{R}^d \: | \: (x- \mu_j)^\top \Sigma_j^{-1}(x- \mu_j) \leq  -2 \ln \left(\dfrac{p_{11}\sqrt{|\Sigma_j |}}{p_{ij} \sqrt{|\Sigma_1|}} \right)  \right\}, &\mbox{else}
\end{cases}.
\end{align*}
\item \label{GLMHarris}It holds $\max_{i \in \Y}\sum_{j \in \Y}p_{ij}\|F(j)\|_1<1$.
\end{enumerate}
\end{cor}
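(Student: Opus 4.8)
The plan is to obtain Corollary \ref{GLMcor} as a direct application of Theorem \ref{thLSC}, feeding its hypotheses through the three auxiliary lemmas already available: Lemma \ref{LMB1B2Lemma} to secure \ref{Bcluster}, Lemma \ref{LMB3} to secure \ref{Bcenter}, and Lemma \ref{LMHarris} to secure Harris recurrence. The entire argument leans on the fact that a Gaussian density $h_i$ is strictly positive, continuous and bounded on all of $\mathbb{R}^d$ and has finite first absolute moment. These regularity properties make all the \emph{structural} hypotheses of the lemmas automatic (including the lower semi-continuity and boundedness of $q(x,j|x',i)=p_{ij}h_j(x-F(j)x')$ and strict positivity of Lebesgue measure $\mu$ required by Theorem \ref{thLSC}), so that the only real content lies in translating condition \ref{GLMcycle} of the corollary into the cycle inequality demanded by condition \ref{LMB3cycle} of Lemma \ref{LMB3}.

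First I would verify \ref{Bcluster} via Lemma \ref{LMB1B2Lemma}. Since each $h_i$ is positive everywhere, the positivity-pattern requirement of \ref{LMprimit} holds with $C=\Y$ and any $r>0$, and the primitivity requirement is exactly assumption \ref{GLMprim} (with $\mathbb{P}_C=\mathbb{P}$). Because $\mathbb{P}$ primitive makes $Y$ irreducible while all $h_i$ are positive everywhere, every point of $\Z$ is reachable (as noted in the discussion after Lemma \ref{LMB1B2Lemma}); hence \ref{LMreach} holds with $x_E=0$ and, say, $i_E=1\in\Y_C=\Y$. Thus \ref{Bcluster} is satisfied. In parallel I would check Lemma \ref{LMHarris}: positivity of the $h_i$ together with irreducibility of $Y$ yields $\psi$-irreducibility of $Z$ with $\operatorname{supp}\psi=\mathbb{R}^d\times\Y$, whose interior is non-empty (each slice $\mathbb{R}^d\times\{i\}$ is open in the product topology), giving hypothesis (i); the Gaussian law has finite first moments, giving hypothesis (ii); and hypothesis (iii) is literally condition \ref{GLMHarris}. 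Hence $Z$ is Harris recurrent.

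The crux, and the step I expect to be the main obstacle, is verifying \ref{Bcenter} through Lemma \ref{LMB3}, which amounts to producing a single $x^*$ fulfilling conditions \ref{LMB3p11}--\ref{LMB3cycle}. The natural choice is to place the argument of $h_1$ at its mode: set $x^*=(\mathbb{I}_d-F(1))^{-1}\mu_1$, which is well defined precisely because \ref{GLMcycle} assumes $\mathbb{I}_d-F(1)$ non-singular. Then $x^*-F(1)x^*=\mu_1$, so $h_1(x^*-F(1)x^*)=h_1(\mu_1)=\bigl((2\pi)^{d/2}\sqrt{|\Sigma_1|}\bigr)^{-1}$ is the maximal value of $h_1$, while $x^*-F(j)x^*=(\mathbb{I}_d-F(j))(\mathbb{I}_d-F(1))^{-1}\mu_1$. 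Condition \ref{LMB3p11} is condition \ref{p11Max}; \ref{LMB3reach} holds since all points are reachable; continuity of each $h_i$ at $x^*-F(i)x^*$ is automatic. The remaining inequality $p_{11}h_1(x^*-F(1)x^*)>p_{ij}h_j(x^*-F(j)x^*)$ for all $i\in\Y$ and $j\ne1$ I would reduce to condition \ref{GLMcycle} by direct computation. When $p_{ij}=0$ the right-hand side vanishes and the inequality is trivial (matching $H_{ij}=\emptyset$); when $p_{ij}>0$, writing out the two Gaussian densities, taking logarithms and rearranging turns the inequality into
\begin{align*}
\bigl(x^*-F(j)x^*-\mu_j\bigr)^{\top}\Sigma_j^{-1}\bigl(x^*-F(j)x^*-\mu_j\bigr) > -2\ln\!\left(\frac{p_{11}\sqrt{|\Sigma_j|}}{p_{ij}\sqrt{|\Sigma_1|}}\right).
\end{align*}
Since the left-hand quadratic form is non-negative ($\Sigma_j^{-1}$ being positive definite), this is automatic whenever the ratio exceeds $1$ (again matching $H_{ij}=\emptyset$), and is otherwise exactly the requirement $(\mathbb{I}_d-F(j))(\mathbb{I}_d-F(1))^{-1}\mu_1\notin H_{ij}$. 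The delicate bookkeeping lies in keeping these two degenerate cases aligned with the case split in the definition of $H_{ij}$; once matched, Lemma \ref{LMB3} delivers \ref{Bcenter}.

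Finally I would assemble the pieces. With \ref{Bcluster} and \ref{Bcenter} in force, $\mu$ strictly positive and the transition densities lower semi-continuous and bounded, Theorem \ref{thLSC} produces $\X^*$ satisfying \textbf{A1'}, \textbf{A2} and \textbf{A3}; and since $Z$ is Harris recurrent, the same theorem yields $P(X\in\X^*\ \mbox{i.o.})=1$. Because \textbf{A1'} guarantees \emph{strong} $1$-barriers of fixed order, the conclusion of the corollary follows.
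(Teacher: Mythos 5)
Your proposal is correct and follows essentially the same route as the paper: the authors likewise verify \ref{Bcluster} via Lemma \ref{LMB1B2Lemma} (using everywhere-positivity of the Gaussians and primitivity of $\mathbb{P}$), verify \ref{Bcenter} via Lemma \ref{LMB3} with the same choice $x^*=(\mathbb{I}_d-F(1))^{-1}\mu_1$ and the same identification of $H_{ij}$ as the sublevel set $\{x : p_{ij}h_j(x)\geq p_{11}h_1(\mu_1)\}$, obtain Harris recurrence from Lemma \ref{LMHarris}, and conclude by Theorem \ref{thLSC}. Your explicit logarithmic computation matching the case split in $H_{ij}$ is exactly the step the paper compresses into ``some calculation reveals''.
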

\begin{proof} By \ref{GLMprim} $Y$ is irreducible. This together with the fact that densities $h_i$ are positive on the whole space $\mathbb{R}^d$ implies that all elements in $\Z=\mathbb{R}^d \times \Y$ are reachable and that $Z$ is $\mu \times c$-irreducible, where $\mu$ denotes the Lebesgue measure on $\mathbb{R}^d$ and $c$ denotes the counting measure on $\Y$. It follows from \ref{GLMprim} and Lemma \ref{LMB1B2Lemma} that \ref{Bcluster} holds. Take now $x^*=(\mathbb{I}_d-F(1))^{-1}\mu_1$ (then $x^*-F(1)x^*=\mu_1$ and so $x^*-F(1)x^*$ maximises $h_1$). Condition \ref{GLMcycle} implies
\begin{align} \label{notinHij}
x^*-F(j)x^* \notin H_{ij}, \quad \forall i \in \Y, \quad \forall j \in \Y \setminus \{1\}.
\end{align}
Some calculation reveals that $\{x \in \mathbb{R}^d \: | \: p_{ij}h_j(x) \geq p_{11}h_1(x^*-F(1)x^*)\}=H_{ij}$ and so \eqref{notinHij} implies
\begin{align*}
p_{ij}h_j(x^*-F(j)x^*)<p_{11}h_1(x^*-F(1)x^*), \quad \forall i \in \Y, \quad \forall j \in \Y \setminus \{1\}.
\end{align*}
This together with assumption \ref{p11Max} and Lemma \ref{LMB3} implies that \ref{Bcenter} holds. By \ref{GLMHarris} and Lemma \ref{LMHarris} $Z$ is Harris recurrent, so the statement follows from Theorems \ref{th1} and \ref{thLSC}.
\end{proof}
In some cases the condition \ref{p11Max} of Corollary \ref{GLMcor} can be rather restrictive, particularly when the diagonal entries of $\mathbb{P}=(p_{ij})$ are small and so there are not many (or none at all) diagonal entries of $\mathbb{P}$ which dominate their column (i.e. are larger than or equal to other column entries). In that case one possible solution is to group the elements of $Z$ to pairs, that is consider the model $Z'=\{((X_{2k-1},X_{2k}),(Y_{2k-1},Y_{2k}))\}_{k\geq 1}$ instead of $Z=\{(X_k,Y_k)\}_{k\geq 1}$. The transition kernel density of chain $Z'$ is simply
\begin{align*}
q'(z_3,z_4|z_1,z_2) \colon ((z_1,z_2),(z_3,z_4)) \mapsto p(z_3,z_4|z_2)=q(z_3|z_2)q(z_4|z_3).
\end{align*}
Let $X'$ and $Y'$ denote the marginals of $Z'$:
$X'=\{(X_{2k-1},X_{2k})\}_{k \geq 1}$ and
$Y'=\{(Y_{2k-1},Y_{2k})\}_{k \geq 1}$. The existence of Viterbi
process  for $X'$ implies the existence of Viterbi process for $X$ under appropriate tie-breaking rules. Indeed, consider the case where the tie-breaking scheme corresponding to $X'$ is lexicographic, induced by the following ordering on $\Y^2$:
\begin{align*}
(1,1) \succ (1,2) \succ (1,3)  \succ \cdots \succ (2,1) \succ (2,2) \succ (2,3) \succ \cdots \succ (|\Y| , |\Y|).
\end{align*}
We also assume that the ordering on $\Y$ is
\begin{align*}
1 \succ 2 \succ \cdots \succ |\Y|.
\end{align*}
Thus $\Y^2$ is equipped with lexicographic ordering induced by the ordering on $\Y$. We assume that the tie-breaking scheme corresponding to $X$ is lexicographic as well. Then, if $\{(V_{2k-1},V_{2k})\}_{k\geq 1}$ is the Viterbi process of $X'$, $\{V_k\}_{k \geq 1}$ is the Viterbi process of $X$.

Chain $Z'$ is a linear Markov switching model on space $\mathbb{R}^{2d} \times \Y'$, where $\Y' \DEF \{(i,j) \in \Y^2 \:| \: p_{ij}>0\}$. To see this, note that
\begin{align*}
\begin{pmatrix} X_{2k-1}\\X_{2k}
  \end{pmatrix}
 = \begin{pmatrix} \mathbf{0} &F(Y_{2k-1})\\
 \mathbf{0} &F(Y_{2k})F(Y_{2k-1})
  \end{pmatrix}
  \begin{pmatrix} X_{2k-3}\\X_{2k-2}
  \end{pmatrix}
  + \xi'_{k}(Y_{2k-1},Y_{2k}), \quad k\geq 2,
\end{align*}
where
\begin{align*}
\xi'_k(i,j) \DEF \begin{pmatrix} \xi_{2k-1}(i)\\
  F(j)\xi_{2k-1}(i)+\xi_{2k}(j)
  \end{pmatrix} =
  \begin{pmatrix}
 \mathbb{I}_d & \mathbf{0}\\
 F(j) &\mathbb{I}_d
  \end{pmatrix}
\begin{pmatrix} \xi_{2k-1}(i)\\
  \xi_{2k}(j)
  \end{pmatrix}.
\end{align*}
The matrix 
$B \DEF \begin{pmatrix}
 \mathbb{I}_d & \mathbf{0}\\
 F(j) &\mathbb{I}_d
  \end{pmatrix}$
has full rank, so assuming that $\xi_k(i)$ are non-degenerate Gaussian with respective mean vectors $\mu_i$ and covariance matrices $\Sigma_i$, then random vectors $\xi_k'(i,j)$ are non-degenerate Gaussian with respective mean values $\begin{pmatrix}
\mu_i\\
F(j)\mu_i+\mu_j
\end{pmatrix}$
and covariance matrices $B\begin{pmatrix}
\Sigma_i &\mathbf{0} \\
\mathbf{0} &\Sigma_j
\end{pmatrix}B^{\top}$. Therefore Corollary \ref{GLMcor} applies to both $Z$ and $Z'$.

Markov chain $Y'$, the hidden process of model $Z'$, has transition matrix $\mathbb{P}' \DEF (p_{jk}p_{kl})_{(i,j),(k,l) \in \Y'}$. Matrix $\mathbb{P}'$ might have diagonal entries which dominate their column, even if $\mathbb{P}$ does not have such entries. As a simple example consider the case where $\mathbb{P}=\begin{pmatrix}
\epsilon & 1-\epsilon \\
1-\epsilon' & \epsilon'
\end{pmatrix}$, where $\epsilon, \epsilon' \in (0, \frac{1}{2})$. Then
\begingroup
\renewcommand*{\arraystretch}{1.3}
\renewcommand{\kbldelim}{(}
\renewcommand{\kbrdelim}{)}
\begin{equation*} \mathbb{P'}=
\kbordermatrix{  & (1,1) & (1,2) & (2,1) & (2,2)\\
     (1,1) & \epsilon^2            & \epsilon(1-\epsilon)      &  (1- \epsilon)(1-\epsilon')&(1-\epsilon)(\epsilon')^2 \\
     (1,2) & (1-\epsilon')\epsilon & (1-\epsilon')(1-\epsilon) & \epsilon'(1-\epsilon')    &(\epsilon')^2 \\
     (2,1) & \epsilon^2            & \epsilon(1-\epsilon)      & (1- \epsilon)(1-\epsilon') &(1-\epsilon)(\epsilon')^2 \\
     (2,2) & (1-\epsilon')\epsilon & (1-\epsilon')(1-\epsilon) & \epsilon'(1-\epsilon')       & (\epsilon')^2 \\
    },
\end{equation*}
\endgroup
and so the second and third diagonal entry of $\mathbb{P}'$ dominates its column.

In general primitiveness of $\mathbb{P}$ does not imply primitiveness of $\mathbb{P}'$, but the reader can easily verify that if there exists an \textit{odd} positive integer $R$ such that $\mathbb{P}^R$ contains only positive entries, then $\mathbb{P}'^{(R+1)/2}$ contains only positive entries and so $\mathbb{P}'$ is primitive. We also note that the approach described above can easily be generalized to include groupings of triplets, quadruplets, etc.

\begin{appendices}
\section{Supporting results}
\begin{lemma} \label{lem1} Suppose there exist sets $A \subset\mathcal{Z}$ and $B \subset\mathcal{Z}^M$, $M\geq 1$, and $\epsilon>0$ such that
\begin{align*}
&P(Z_k \in A \mbox{ i.o.})=1,\\
&P(Z_{1:M} \in B|Z_1=z)\geq \epsilon, \quad \forall z \in A.
\end{align*}
Then
\begin{align*}
P(Z \in B \mbox{ i.o.})=1.
\end{align*}
\end{lemma}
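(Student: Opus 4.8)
The plan is to read this as a conditional (second) Borel--Cantelli statement and to prove it via L\'evy's extension of the Borel--Cantelli lemma along a sequence of well-separated visit times to $A$. Throughout let $\mathcal{F}_k=\sigma(Z_1,\ldots,Z_k)$ denote the natural filtration of the Markov chain $Z$, and recall that, exactly as in the definition of $\{X\in\X^*\ \mbox{i.o.}\}$, the target event is $\{Z\in B\ \mbox{i.o.}\}=\bigcap_{k\geq 1}\bigcup_{l\geq k}\{Z_{l:l+M-1}\in B\}$.

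First I would build stopping times that visit $A$ but are spaced at least $M$ apart, so that the associated length-$M$ blocks cannot overlap. Set $\sigma_1=\inf\{k\geq 1:Z_k\in A\}$ and, recursively, $\sigma_{n+1}=\inf\{k\geq \sigma_n+M:Z_k\in A\}$. Each $\sigma_n$ is an $(\mathcal{F}_k)$-stopping time, and the assumption $P(Z_k\in A\ \mbox{i.o.})=1$ guarantees by an immediate induction that all $\sigma_n$ are almost surely finite; moreover $\sigma_n\geq \sigma_1+(n-1)M\to\infty$.

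Next, define $E_n=\{Z_{\sigma_n:\sigma_n+M-1}\in B\}$ and the filtration $\mathcal{G}_n=\mathcal{F}_{\sigma_n}$. The separation $\sigma_{n+1}\geq \sigma_n+M$ ensures $E_n\in\mathcal{F}_{\sigma_n+M-1}\subseteq\mathcal{F}_{\sigma_{n+1}}=\mathcal{G}_{n+1}$, so $(E_n)$ is adapted. The crucial step is the strong Markov property: at the almost surely finite stopping time $\sigma_n$, conditionally on $\mathcal{F}_{\sigma_n}$ the shifted process $(Z_{\sigma_n},Z_{\sigma_n+1},\ldots)$ is a copy of the chain started from $Z_{\sigma_n}$. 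Writing $g(z)=P(Z_{1:M}\in B\mid Z_1=z)$, this gives $P(E_n\mid\mathcal{G}_n)=g(Z_{\sigma_n})\geq\epsilon$ almost surely, the inequality holding because $Z_{\sigma_n}\in A$ and the second hypothesis applies to every state of $A$. In particular $\sum_n P(E_n\mid\mathcal{G}_n)=\infty$ almost surely.

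Finally I would invoke L\'evy's conditional Borel--Cantelli lemma for the adapted sequence $(E_n)$ relative to $(\mathcal{G}_n)$ (with $E_n\in\mathcal{G}_{n+1}$), which identifies $\{E_n\ \mbox{i.o.}\}$ with $\{\sum_n P(E_n\mid\mathcal{G}_n)=\infty\}$ up to a null set; hence $P(E_n\ \mbox{i.o.})=1$. Since on $\{E_n\ \mbox{i.o.}\}$ there are infinitely many $n$ with $Z_{\sigma_n:\sigma_n+M-1}\in B$ and $\sigma_n\to\infty$, this event is contained in $\{Z\in B\ \mbox{i.o.}\}$, which yields the claim. The main obstacle I expect lies in the careful handling of the stopping-time machinery: justifying that the $\sigma_n$ are a.s.\ finite directly from the i.o.\ hypothesis, applying the strong Markov property at these times to obtain the uniform lower bound $\epsilon$, and verifying that the $M$-spacing makes each $E_n$ measurable by time $\sigma_{n+1}$ so that L\'evy's lemma applies to a genuinely adapted sequence.
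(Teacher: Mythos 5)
Your proof is correct, but it takes a genuinely different route from the paper's. The paper (following the proof of Theorem 9.1.3 in Meyn--Tweedie) works with the deterministic-time events $E_n=\{Z_{n:n+M-1}\in B\}$ and never introduces stopping times: it applies the martingale convergence theorem (L\'evy's $0$--$1$ law) to show $P\bigl(\cup_{i\geq n}E_i\mid\mathcal{F}_n\bigr)\to\mathbb{I}\bigl(\cap_k\cup_{i\geq k}E_i\bigr)$ a.s., identifies $P\bigl(\cup_{i\geq n}E_i\mid\mathcal{F}_n\bigr)=L(Z_n)$ with $L(z)=P(\cup_{i\geq 1}E_i\mid Z_1=z)\geq\epsilon$ on $A$ via the \emph{simple} Markov property, and concludes that on $\{Z_k\in A\mbox{ i.o.}\}$ the limit is at least $\epsilon$, hence equal to $1$. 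You instead extract $M$-separated return times $\sigma_n$ to $A$, invoke the \emph{strong} Markov property to get $P(E_n\mid\mathcal{F}_{\sigma_n})\geq\epsilon$, and finish with L\'evy's conditional Borel--Cantelli lemma. Both arguments are sound and both ultimately rest on martingale convergence; the trade-off is that the paper's version avoids all stopping-time and adaptedness bookkeeping (the separation issue never arises because it conditions on the union $\cup_{i\geq n}E_i$ rather than on individual blocks), whereas your version requires the strong Markov property and the $E_n\in\mathcal{G}_{n+1}$ check but delivers the marginally stronger conclusion that the blocks started \emph{at visits to $A$} land in $B$ infinitely often. One implicit point in both proofs: $B$ must be a measurable subset of $\mathcal{Z}^M$ for $E_n$ to be an event, which the lemma's statement leaves tacit.
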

\begin{proof}
The proof is just a slightly modified version of the proof of \cite[Th. 9.1.3]{MT}. It suffices to show that
\begin{align} \label{Stat}
P(Z \in A \mbox{ i.o.}) \leq P(Z \in B \mbox{ i.o.}).
\end{align}
Define
\begin{align*}
E_{n}=\{Z_{n:n+M-1} \in B\}, \quad n\geq 1.
\end{align*}
For each $n \geq 1$ let $\mathcal{F}_n$ be a $\sigma$-field generated by $\{Z_1,...,Z_n\}$. First we show that as $n \rightarrow \infty$
\begin{align} \label{c1}
P\left( \bigcup_{i=n}^\infty E_i | \mathcal{F}_n \right) \rightarrow \mathbb{I}\left( \bigcap_{k=1}^\infty \bigcup_{i=k}^\infty E_i \right), \quad \mbox{a.s.},
\end{align}
where $\mathbb{I}(\cdot)$ is the indicator function. To see this, note that for fixed $l \leq n$
\begin{align} \label{ineq2}
P\left( \bigcup_{i=l}^\infty E_i| \mathcal{F}_n \right) \geq P\left( \bigcup_{i=n}^\infty E_i| \mathcal{F}_n \right) \geq P\left(\bigcap_{k=1}^\infty \bigcup_{i=k}^\infty E_i| \mathcal{F}_n \right).
\end{align}
Applying the Martingale Convergence Theorem to the extreme elements of the inequalities (\ref{ineq2}), we obtain
\begin{align} \label{ineq3}
\mathbb{I}(\cup_{i=l}^\infty E_i ) \geq \limsup_n P(\cup_{i=n}^\infty {E_i}|\mathcal{F}_n) \geq \liminf_n P(\cup_{i=n}^\infty {E_i}|\mathcal{F}_n)
\geq \mathbb{I}(\cap_{k=1}^\infty \cup_{i=k}^\infty E_i).
\end{align}
As $l \rightarrow \infty$, the two extremes in (\ref{ineq3}) converge, which shows that the convergence (\ref{c1}) holds as required.

Next, define
\begin{align*}
L(z)=P(\cup_{i=1}^\infty E_i|Z_1=z), \quad z \in \mathcal{Z}.
\end{align*}
Clearly $L(z)\geq \epsilon$ for every $z \in A$. Also by Markov property $P(\cup_{i=n}^\infty E_i| \mathcal{F}_n)=L(Z_n)$ a.s. Thus, using (\ref{c1}), we have almost surely
\begin{align*}
\mathbb{I}(\cap_{k=1}^\infty \cup_{i=k}^\infty \{Z_i \in A\}) & \leq \mathbb{I}(\limsup_n L(Z_n) \geq \epsilon)= \mathbb{I}(\lim_n L(Z_n) =1 )= \mathbb{I}(\cap_{k=1}^\infty \cup_{i=k}^\infty E_i).
\end{align*}
This implies (\ref{Stat}).
\end{proof}

\begin{lemma}\label{HMMHarris} Let $Z$ be HMM and define $G_i=\{x \in \X \:| \: f_i(x)>0\}$ for $i \in \Y$. If Markov chain $Y$ is irreducible then $Z$ is $\psi$-irreducible, where
\begin{align*}
\psi(A \times \{i\})=\mu(A \cap G_i), \quad A \in \B(\X), \quad i \in \Y,
\end{align*}
 and Harris recurrent.
\end{lemma}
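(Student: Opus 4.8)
The plan is to exploit the two defining features of an HMM: first, that conditionally on the whole regime path $Y$ (and on $X_1$) the observations $\{X_k\}_{k\ge 2}$ are independent with $X_k$ distributed according to the emission density $f_{Y_k}$; and second, that a finite irreducible chain $Y$ is automatically recurrent and hence visits every state infinitely often from any starting point.

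First I would verify $\psi$-irreducibility. Take any $B \in \B(\Z)$ with $\psi(B) > 0$ and write $B = \bigcup_{i \in \Y}(B_i \times \{i\})$ with $B_i \in \B(\X)$; since $\psi(B) = \sum_i \mu(B_i \cap G_i)$, there is a state $i$ with $\mu(B_i \cap G_i) > 0$, and consequently $\int_{B_i} f_i \, d\mu = \int_{B_i \cap G_i} f_i \, d\mu > 0$ (as $f_i=0$ off $G_i$ and $f_i>0$ on $G_i$). Fix any $z = (x', j) \in \Z$. Using the factorization $q(x, j' | x', i') = p_{i'j'} f_{j'}(x)$ and integrating out the intermediate observations (each emission density integrates to $1$), a direct computation gives, for every $k \ge 2$,
\[
P(Z_k \in B_i \times \{i\} \mid Z_1 = (x', j)) = p^{(k-1)}_{ji} \int_{B_i} f_i \, d\mu,
\]
where $p^{(k-1)}_{ji}$ is the $(k-1)$-step transition probability of $Y$. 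By irreducibility of $Y$ there is some $n \ge 1$ with $p^{(n)}_{ji} > 0$, so the term for $k = n+1$ is strictly positive. Hence $\sum_{k=2}^\infty P(Z_k \in B \mid Z_1 = z) > 0$ for every $z$, which is exactly $\psi$-irreducibility.

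For Harris recurrence I would fix $A$ with $\psi(A) > 0$ and, as above, extract a state $i$ and a set $B := A_i \cap G_i$ with $\mu(B) > 0$ and $\beta := \int_B f_i \, d\mu > 0$, so that $B \times \{i\} \subseteq A$. Fix an arbitrary $z \in \Z$. Since $Y$ is a finite irreducible chain it is recurrent, so the successive visit times $T_1 < T_2 < \cdots$ of $Y$ to state $i$ are almost surely infinite in number. Conditionally on the path $Y$, the observations $\{X_{T_m}\}_m$ are independent and each satisfies $P(X_{T_m} \in B \mid Y) = \int_B f_i \, d\mu = \beta$ because $Y_{T_m} = i$. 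As $\sum_m \beta = \infty$, the second Borel--Cantelli lemma applied conditionally on $Y$ gives $P(X_{T_m} \in B \text{ i.o.} \mid Y) = 1$ almost surely, and integrating over $Y$ yields $P(X_{T_m} \in B \text{ i.o.} \mid Z_1 = z) = 1$. Since $\{X_{T_m} \in B\} = \{Z_{T_m} \in B \times \{i\}\} \subseteq \{Z_{T_m} \in A\}$, this proves $P(Z_k \in A \text{ i.o.} \mid Z_1 = z) = 1$ for every $z$, establishing Harris recurrence.

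The two density identities are routine; the one point requiring care is the conditional independence step, namely that given $Y_{1:\infty}$ (and $X_1$) the emissions $\{X_k\}_{k \ge 2}$ are independent with $X_k \sim f_{Y_k}$, which is what licenses the conditional second Borel--Cantelli argument and follows from $q(x,j|x',i) = p_{ij}f_j(x)$ being independent of $x'$. I expect the main obstacle to be bookkeeping rather than conceptual: treating the $T_m$ correctly as random visit times, and checking that the recurrence conclusion is genuinely uniform over all initial states $z \in \Z$ rather than merely $\psi$-almost every $z$. Note that aperiodicity of $Y$ is not needed, since only recurrence (not mixing) is invoked.
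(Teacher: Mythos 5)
Your argument is correct, but it takes a genuinely different route from the paper. The paper's proof is two lines: it observes that the transition kernel out of any point $(x,1)$ does not depend on $x$, so that $\X\times\{1\}$ is a small set (in fact an atom), and then invokes the general result \cite[Prop.~9.1.7]{MT} to upgrade $\psi$-irreducibility to Harris recurrence. You instead give a self-contained probabilistic proof: the explicit $k$-step computation $P(Z_k\in B_i\times\{i\}\mid Z_1=(x',j))=p^{(k-1)}_{ji}\int_{B_i}f_i\,d\mu$ for irreducibility, and then the conditional second Borel--Cantelli lemma along the successive visit times of the finite recurrent chain $Y$ to state $i$ for the ``infinitely often'' property, uniformly in the starting point $z$. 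Both are sound; the paper's version is shorter but leans on the Meyn--Tweedie machinery, while yours makes the mechanism (conditional independence of emissions given the regime, plus recurrence of $Y$) completely explicit and avoids any citation. One point you should make explicit to match the paper's definitions: the paper defines Harris recurrence relative to the \emph{maximal} irreducibility measure, whereas you verify the i.o.\ property for sets charged by the particular $\psi$ of the lemma statement. This is not a real gap, because your own $k$-step identity shows that for every $z$ the return kernel $\sum_k P(Z_k\in\cdot\mid Z_1=z)$ charges exactly the sets that the stated $\psi$ charges, so the stated $\psi$ is equivalent to the maximal irreducibility measure; a sentence saying so closes the loop. Your closing remark that aperiodicity is not needed is consistent with the paper, which indeed drops the aperiodicity assumption of Theorem \ref{HMMTh} in Corollary \ref{HMMcor}.
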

\begin{proof} That $Z$ is $\psi$-irreducible follows directly from the irreducibility of $Y$ and definition of HMM. Because for every $x \in \X$ and $A \in \B(\Z)$
\begin{align*}
P(Z_2 \in A|Z_1=(x,1))=P(Z_2 \in A|Y_1=1),
\end{align*}
then set $\X \times \{1\}$ is small  (see definition in \cite{MT}). Harris recurrence now follows from \cite[Prop. 9.1.7]{MT}.
\end{proof}

\section{Proof of Lemma \ref{LMHarris}} \label{LMHarrisProof}
We start with the following auxiliary lemma:
\begin{lemma} \label{lemHarris} Let $Z$ be the linear Markov switching model defined by \eqref{LMSM}. Suppose $Z$ is $\psi$-irreducible, support of $\psi$ has non-empty interior, and there exists a compact set $C \subset \mathbb{R}^d \times \Y$ and a positive measurable function $V \colon \mathcal{Z} \rightarrow \mathbb{R}_{>0}$ satisfying
\begin{align} \label{drift}
\mathbb{E}[V(Z_2)|Z_1=z]-V(z)\leq 0, \quad \forall z \in \mathcal{Z} \setminus C.
\end{align}
If set $\{z \: | \: V(z) \leq k\}$ is contained in a compact set for every $k< \infty$, then $Z$ is Harris recurrent.
\end{lemma}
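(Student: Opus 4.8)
The plan is to reduce the claim to the standard Foster--Lyapunov drift criterion for Harris recurrence of Meyn and Tweedie. Recall \cite[Theorem 9.1.8]{MT}: a $\psi$-irreducible chain that admits a petite set $C$ and a function $V$ which is unbounded off petite sets (that is, each sublevel set $\{z : V(z) \leq k\}$ is petite) and satisfies $\mathbb{E}[V(Z_2)\mid Z_1=z]-V(z)\leq 0$ for every $z \notin C$ is Harris recurrent. We are handed $\psi$-irreducibility and the drift inequality \eqref{drift} off the \emph{compact} set $C$ outright, so the only genuine work is to promote ``compact'' to ``petite'': once every compact subset of $\Z$ is shown to be petite, the hypotheses of \cite[Theorem 9.1.8]{MT} are met, because $C$ is then petite and each $\{z : V(z) \leq k\}$, being contained in a compact set by assumption, is a subset of a petite set and hence itself petite, so $V$ is unbounded off petite sets.

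Accordingly, the core of the argument is to show that compact subsets of $\Z=\mathbb{R}^d\times\Y$ are petite. First I would verify that $Z$ is a (weak) Feller chain. For bounded continuous $f$ and a state $z=(x',i)$, the recursion \eqref{LMSM} gives $\mathbb{E}[f(Z_2)\mid Z_1=(x',i)]=\sum_{j\in\Y}p_{ij}\,\mathbb{E}[f(F(j)x'+\xi_2(j),j)]$; since $x'\mapsto f(F(j)x'+\xi,j)$ is continuous and dominated by $\|f\|_\infty$, dominated convergence makes each summand continuous in $x'$, and as $\Y$ is finite and discrete the whole expression is continuous on $\Z$. Thus $Z$ is weak Feller. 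Now combine this with the two standing hypotheses, $\psi$-irreducibility and the non-empty interior of $\operatorname{supp}\psi$: by \cite[Theorem 6.2.9]{MT} these together force $Z$ to be a T-chain, and then \cite[Theorem 6.2.5]{MT} yields that every compact set is petite for the $\psi$-irreducible T-chain $Z$.

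With petiteness of compact sets established, all three ingredients of \cite[Theorem 9.1.8]{MT} are in place -- $C$ is petite, $V$ is unbounded off petite sets, and \eqref{drift} is precisely the drift inequality -- so $Z$ is Harris recurrent, completing the proof. I expect the petiteness step to be the only real obstacle: the payoff of assuming that $\operatorname{supp}\psi$ has non-empty interior is exactly that, in the presence of the Feller property (which the linear recursion \eqref{LMSM} supplies essentially for free), it upgrades compact sets to petite sets; after that the Meyn--Tweedie drift machinery applies with nothing further to check.
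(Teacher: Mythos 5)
Your proposal is correct and follows essentially the same route as the paper: verify the weak Feller property of the linear recursion by dominated convergence, use $\psi$-irreducibility together with the non-empty interior of $\operatorname{supp}\psi$ to conclude that compact sets are petite, and then invoke the drift criterion \cite[Th. 9.1.8]{MT}. The only difference is cosmetic — the paper reaches "compacts are petite" via \cite[Prop. 6.1.1(i)]{MT} and \cite[Prop. 6.2.8]{MT} rather than through the T-chain results you cite, but the logical content is identical.
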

\begin{proof} According to our assumption space $\Z$ is equipped with product topology $\tau \times 2^\Y$, where $\tau$ is the the topology on the Euclidean space $\X=\mathbb{R}^d$. In this topology saying that some function $h \colon \Z \rightarrow \mathbb{R}$ is continuous means the following: for every $x_0 \in \X$ and $i \in \Y$
\begin{align*}
\lim_{x \rightarrow x_0}h(x,i)=h(x_0,i).
\end{align*}
First we show that for any bounded and continuous function $h \colon \mathcal{Z} \rightarrow \mathbb{R}$, the function \begin{align} \label{Ehmap}
z \mapsto \mathbb{E}[h(Z_2)|Z_1=z]
\end{align}
is also bounded and continuous. Indeed, we have
\begin{align*}
\mathbb{E}[h(Z_2)|Z_1=(x',i)]&=\int_\Z h(x,j) \,  q(x,j|x',i) \mu \times c(d(x,j))\\
&=\int_\Z h(x,j)p_{ij}f_j(x|x') \, \mu \times c(d(x,j))\\
&=\sum_{j \in \Y}\int_\X h(x,j)p_{ij}f_j(x|x') \, \mu(dx)\\
&=\sum_{j \in \Y}\int_\X h(x,j)p_{ij}h_j(x-F(j)x') \, \mu(dx)\\
&=\sum_{j \in \Y}\int_\X h(x+F(j)x',j)p_{ij}h_j(x) \, \mu(dx).
\end{align*}
Thus (\ref{Ehmap}) is bounded, and also continuous by Dominated Convergence Theorem. In what follows, the definitions for the terms in italic can be found from \cite{MT}. By \cite[Prop. 6.1.1(i)]{MT} $Z$ is \textit{weak Feller}. Hence by \cite[Prop. 6.2.8]{MT} every compact set in $\mathcal{Z}$ is \textit{petite}. Thus the statement follows from \cite[Th. 9.1.8]{MT}.
\end{proof}

We take $V(x,i)=\|x\|_1+1$; then
\begin{align*}
\mathbb{E}[V(Z_2)|Z_1=(x',i)]-V(x',i)&=\sum_{j \in \Y}p_{ij}\int \|x\|_1 h_{j}(x-F(j)x') \,  \mu(dx)-\|x'\|_1\\
&=\sum_{j \in \Y}p_{ij}\int \|x+F(j)x'\|_1 h_{j}(x) \,  \mu(dx)-\|x'\|_1\\
&\leq \sum_{j \in \Y}p_{ij}\int ( \|x\|_1+ \|F(j)\|_1\|x'\|_1) h_{j}(x) \,  \mu(dx)-\|x'\|_1\\
&\leq \sum_{j \in \Y}p_{ij}\mathbb{E}\|\xi_2(j)\|_1 +\|x'\|_1  \sum_{j \in \Y}p_{ij}\|F(j)\|_1 -\|x'\|_{1}.
\end{align*}
Thus by the assumptions that the expectations $\mathbb{E}\|\xi(i)\|_1$ are finite and $\max_{i \in \Y}\sum_{j \in \Y}p_{ij}\|F(j)\|_1<1$, we have that (\ref{drift}) holds with $C=[-n,n]^d \times \Y$, when $n$ is sufficiently large. Also set $\{z \: | \: V(z) \leq k\}$ is contained in a compact set $[-k,k]^d \times \Y$ for every $k< \infty$. Hence Lemma \ref{lemHarris} applies.
\end{appendices}
\paragraph{Acknowledgment.} The research is supported by Estonian  institutional research funding
IUT34-5.


\printbibliography
\end{document}